\documentclass{amsart}
\usepackage{amssymb,amsthm,amsmath}
\usepackage{mathtools}
\usepackage{bm}
\usepackage{esint}
\usepackage[hidelinks]{hyperref} 
\usepackage[noabbrev]{cleveref}
\usepackage{subcaption}
\usepackage[backend=biber,style=numeric,sorting=nyt,maxbibnames=99,
  minbibnames=99,url=false,
  doi=false,
  isbn=false,
  eprint=false]{biblatex}

\renewbibmacro*{volume+number+eid}{%
  \printfield{volume}%
  \setunit{\addspace}%
  \printtext{(\printdate)}%
  \iffieldundef{number}
    {}
    {\setunit{\addcomma\space}%
     \printtext{no.\addspace}\printfield{number}}%
  \setunit{\addcomma\space}%
}
\renewbibmacro*{issue+date}{} 

\DeclareFieldFormat[article]{pages}{#1}
  
\AtEveryBibitem{
  \clearfield{doi}
  \clearfield{isbn}
\clearfield{issn}
  \clearfield{eprint}
  \clearfield{language}
}
\newtheorem{theorem}{Theorem}[section]
\newtheorem*{conjecture*}{Conjecture}
\newtheorem{lemma}[theorem]{Lemma}
\newtheorem{proposition}[theorem]{Proposition}
\newtheorem{corollary}[theorem]{Corollary}
\theoremstyle{definition}
\newtheorem{definition}[theorem]{Definition}
\newtheorem{remark}[theorem]{Remark}
\newtheorem{example}[theorem]{Example}
\numberwithin{equation}{section}

\newcommand*{\de}{\partial}
\newcommand*{\N}{\mathbb{N}}
\newcommand*{\R}{\mathbb{R}}
\newcommand*{\NN}{\mathcal{N}}
\newcommand*{\sol}{\mathcal{WS}}

\newcommand*{\eps}{\varepsilon}

\newcommand*{\loc}{{\text{\upshape{loc}}}}
\newcommand*{\lap}{\Delta}
\newcommand*{\sing}{\Sigma}

\DeclareMathOperator{\osc}{osc}
\DeclareMathOperator{\avg}{avg}
\DeclareMathOperator{\dist}{dist}
\DeclareMathOperator{\reg}{Reg}

\usepackage{geometry}
\AddToHook{env/conjecture/begin}{\crefalias{theorem}{conjecture}}
\AddToHook{env/lemma/begin}{\crefalias{theorem}{lemma}}
\AddToHook{env/proposition/begin}{\crefalias{theorem}{proposition}}
\AddToHook{env/corollary/begin}{\crefalias{theorem}{corollary}}
\AddToHook{env/definition/begin}{\crefalias{theorem}{definition}}
\AddToHook{env/remark/begin}{\crefalias{theorem}{remark}}
\AddToHook{env/example/begin}{\crefalias{theorem}{example}}

\author{Alessio Figalli}
\address{ETH Z\"urich, R\"amistrasse 101, 8092 Z\"urich, Switzerland}
\email{alessio.figalli@math.ethz.ch}
\author{Federico Franceschini}
\address{Stanford University, 450 Jane Stanford Way, Stanford, CA 94305, USA}
\email{ffede@stanford.edu}
\date{\today}

\title[$\varepsilon$-regularity and Dimensional Bounds for the Singular Set]{Stable Semilinear Elliptic Equations:\\  $\varepsilon$-Regularity \`a la Brezis and\\ Dimensional Bounds for the Singular Set}

\begin{document}
\begin{abstract}
We develop a quantitative partial regularity theory for stable solutions of
\[
-\Delta u=f(u),
\]
where $f:\R \to [0,+\infty]$ is increasing and convex. The theory is uniform in the nonlinearity and
allows for a finite or infinite blow-up level $T_f\in(-\infty,+\infty].$


Our first result is a universal $\varepsilon$-regularity criterion that answers a celebrated question of Brezis: smallness of the scale-invariant mass of the stability potential $f'(u)$ forces H\"older regularity.
Moreover, if $T_f<+\infty$, the same smallness condition forces almost quadratic contact between the solution and the blow-up level $T_f$. This result is optimal and, in particular, covers the case of MEMS-type nonlinearities.

Our second result identifies a critical exponent \(q_f\ge1\), given explicitly in terms of the asymptotic behavior of \(f\), \(f'\), and \(f''\), such that
\[
f'(u)\in L^q_{\loc}\qquad\text{for every }q<q_f .
\]
Combined with our \(\varepsilon\)-regularity theorem, this yields quantitative bounds for the singular set, in particular
\[
\dim_{\mathcal H}\sing(u)\le n-2q_f.
\]
Remarkably, our exponent \(q_f\) recovers the sharp thresholds for all standard model nonlinearities, including \(f(t)=(1+t)^p\), \(e^t\), and \((1-t)^{-p}\). Also, this result provides the first general quantitative singular-set estimates for stable semilinear equations beyond the model nonlinearities.

Finally, in the two-dimensional case, we provide a complete picture by proving the universal Hessian estimate
\[
\|D^2u\|_{L^\infty(B_{1/2})}\le C\|u\|_{L^1(B_1)},
\]
where $C$ depends neither on $u$ nor on $f$. This \(C^{1,1}\) regularity is essentially optimal: one cannot expect \(C^{2,\alpha}\) estimates for any $\alpha>0$, and in general even \(C^2\) regularity should fail.
\end{abstract}

\maketitle

\section{Introduction}

Stable solutions of semilinear elliptic equations of the form
\[
-\Delta u=f(u)
\]
occupy a central place in nonlinear elliptic theory and arise naturally in a variety of problems. For specific nonlinearities, such as $e^u$, $(1+u)^p$, or $(1-u)^{-p}$, a well-developed theory identifies sharp regularity thresholds and, in several cases, provides quantitative information on the singular set.

The aim of this paper is to develop a sharp partial regularity theory for general positive, increasing, and convex nonlinearities, with estimates that are as uniform as possible with respect to $f$. This requires overcoming two structural difficulties. First, for an arbitrary nonlinearity $f$, there is no known monotonicity formula suitable for dimension reduction. Second, natural compactness arguments force one to consider limits in which both the solution and the nonlinearity vary, and the limiting nonlinearity may blow up at a finite level.

The main ingredient is a uniform $\varepsilon$-regularity theorem for stable solutions, which answers the intrinsic scale-invariant formulation \eqref{eq:Qbrezisinteg} of Brezis' question \eqref{eq:Qbrezis}; see \Cref{thm:epsreg} below. Before stating our results, we recall the classical Gelfand problem\footnote{Despite the name, this class of problems was first introduced by Barenblatt in a volume edited by Gelfand \cite{Gelfand1963Quasilinear}.}.

\subsection{Motivation: the Gelfand problem and a question of Brezis}

We begin with the classical Gelfand-type Dirichlet problem: for a parameter $\lambda>0$, consider the PDE
\begin{equation}
\label{eq:lambda}
\left\{
\begin{array}{cl}
-\Delta u=\lambda f(u) & \text{in }\Omega,\\
u>0 & \text{in }\Omega,\\
u=0 & \text{on }\partial\Omega,
\end{array}
\right.
\end{equation}
where $\Omega\subset \R^n$ is a bounded smooth domain and the nonlinearity $f\in C^2(\R)$ satisfies
\[
f>0,\qquad  f'\ge 0,\qquad  f''\ge 0 \qquad\text{and}\qquad  \frac{f(t)}{t}\to+\infty\ \text{ as }t\to+\infty.
\]
Let us establish some terminology for solutions of \eqref{eq:lambda}:
\begin{itemize}
    \item $u\in C^2(\Omega)\cap C(\overline \Omega)$ is a \textit{classical solution} if it solves \eqref{eq:lambda}.
    \item $u \in H^1_0(\Omega)$ is a \textit{weak solution} if $f(u)\cdot\dist(\cdot,\partial \Omega)\in L^1(\Omega)$ and
    \[
    \int_\Omega \nabla u\cdot \nabla \zeta\,dx=\lambda\int_{\Omega}f(u)\,\zeta\,dx
    \qquad\text{for all }\zeta \in C^{1}_c(\Omega).
    \]
    \item $u \in L^1(\Omega)$ is an $L^1$\textit{-weak solution} if $f(u)\cdot\dist(\cdot,\partial \Omega)\in L^1(\Omega)$ and
    \[
    -\int_\Omega u\, \Delta \xi\,dx=\lambda\int_{\Omega}f(u)\,\xi\,dx
    \qquad\text{for all }\xi \in C^{2}(\overline\Omega)\ \text{with } \xi|_{\partial\Omega}= 0.
    \]
\end{itemize}
A solution (of any of the above types) is said to be \textit{stable} if the linearized operator is nonnegative in the Dirichlet sense, i.e.
\[
\lambda_1(-\lap -\lambda f'(u),\Omega)\ge 0.
\]
Equivalently (and this is the form we will use throughout), $u$ is stable if  $f'(u)\in L^1_\loc(\Omega)$ and
\begin{equation}\label{eq:stabilityintro}
\int_\Omega\lambda \, f'(u) \,\xi^2\,dx \le \int_\Omega|\nabla\xi|^2\,dx
\qquad \text{for all }\xi\in C^1_c(\Omega).
\end{equation}

\smallskip

The main qualitative results for \eqref{eq:lambda} can be summarized as follows:
\begin{theorem}[\cite{brezisfailure,actadim9,martel}]\label{thm:lambda-star}
There exists a constant $\lambda^\star \in (0,+\infty)$ such that:
\begin{itemize}
\item[(i)] For every $\lambda \in (0,\lambda^\star)$ there is a unique weak stable solution $u_\lambda$, which is also classical. Furthermore, $u_\lambda>u_{\lambda'}$ for $\lambda>\lambda'$.
\item[(ii)] For every $\lambda >\lambda^\star$ there is no $L^1$-weak solution.
\item[(iii)] For $\lambda=\lambda^\star$ there exists a unique $L^1$-weak solution $u^\star$, which is also a weak stable solution and $u_\lambda\uparrow u^\star$ as $\lambda\uparrow \lambda^\star$. Additionally, if $n\le 9$, then $u^\star$ is necessarily classical.
\end{itemize}
\end{theorem}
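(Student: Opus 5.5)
The plan follows the classical monotone-iteration and a priori-estimate route of Crandall--Rabinowitz, Brezis--Cazenave--Martel--Ramiandrisoa, and Cabr\'e--Figalli--Ros-Oton--Serra.

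\emph{Step 1: classical minimal solutions for small $\lambda$.} Since $f(0)>0$, $\underline u=0$ is a strict subsolution. For small $\lambda$, take $\bar u = M\varphi$ with $-\lap\varphi=1$, $\varphi=0$ on $\de\Omega$, and $M$ chosen so that $M\ge \lambda f(M\|\varphi\|_\infty)$; then $\bar u$ is a supersolution. Monotone iteration
\[
-\lap u^{k+1}=\lambda f(u^k),\qquad u^0=0,
\]
produces the minimal classical solution $u_\lambda$. Set $\lambda^\star:=\sup\{\lambda:\ \text{a classical solution exists}\}$, which is positive by this step.

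Minimality gives stability. Monotonicity of the iteration in $\lambda$ gives $u_\lambda\ge u_{\lambda'}$ for $\lambda>\lambda'$, and the strong maximum principle upgrades this to strict inequality. For uniqueness among weak stable solutions, let $v$ be another one. Minimality gives $v\ge u_\lambda$, and convexity gives
\[
-\lap(v-u_\lambda)=\lambda\bigl(f(v)-f(u_\lambda)\bigr)\ge \lambda f'(u_\lambda)(v-u_\lambda).
\]
Testing stability of $v$ against $v-u_\lambda$ and using convexity again in the form $f(v)-f(u_\lambda)\le f'(v)(v-u_\lambda)$ forces equality, so $f$ is affine on $[u_\lambda,v]$. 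Combined with the superlinearity of $f$ this gives $v=u_\lambda$; this is the standard argument of \cite{brezisfailure}.

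\emph{Step 2: $\lambda^\star<\infty$ and nonexistence for $\lambda>\lambda^\star$.} Test the equation against the first Dirichlet eigenfunction $\varphi_1>0$ to get
\[
\lambda_1\int u\varphi_1=\lambda\int f(u)\varphi_1 .
\]
Superlinearity gives $f(t)\ge \mu t - C_\mu$ for any $\mu$, so $\lambda f\ge 2\lambda_1 t - C$ once $\lambda$ is large, which is incompatible with the identity. This bounds $\lambda^\star$.

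Nonexistence of $L^1$-weak solutions for $\lambda>\lambda^\star$ is the delicate point. I would follow Brezis--Cazenave--Martel--Ramiandrisoa. If an $L^1$-weak solution $w$ exists at level $\lambda$, build the concave transform $\Phi(t)=\int_0^t ds/f(s)$, apply it to $w$, and use Kato's inequality. This yields a bounded weak supersolution for $-\lap u=(\lambda-\eps) f(u)$, hence classical solutions for every $\lambda'<\lambda$. It also shows $\lambda^\star$ is exactly the supremum of levels admitting $L^1$-weak solutions. A further rescaling argument (using $\lambda'>\lambda^\star$) shows that a solution at $\lambda>\lambda^\star$ would give classical solutions beyond $\lambda^\star$, a contradiction.

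\emph{Step 3: the extremal solution.} Stability of $u_\lambda$ yields uniform bounds. Test stability with $\xi=f(u_\lambda)-f(0)$, or more robustly use the Cabr\'e--Figalli--Ros-Oton--Serra interior estimate $\|\nabla u\|_{L^{2+\gamma}}\le C\|u\|_{L^1}$, together with the boundary $L^1$ bound obtained by testing with $\varphi_1$. These give $\sup_\lambda \|u_\lambda\|_{H^1_0}<\infty$ and $\int f(u_\lambda)\,\dist(\cdot,\de\Omega)\le C$. Monotone convergence then shows $u^\star=\lim u_\lambda$ is a weak stable solution, with stability passing to the limit by Fatou. Uniqueness of the $L^1$-weak solution at $\lambda^\star$ again uses Martel's argument: any other solution would be a supersolution, producing solutions beyond $\lambda^\star$ or forcing equality.

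Finally, for $n\le 9$ I would invoke the universal estimate of \cite{actadim9},
\[
\|u\|_{C^\alpha(\overline\Omega)}\le C\|u\|_{L^1(\Omega)}
\]
for stable solutions, applied to $u_\lambda$ uniformly in $\lambda$. This gives $u^\star\in L^\infty$, and hence $u^\star$ is classical. The main obstacle is this dimension-$9$ estimate, which is the deep part; everything else is classical.
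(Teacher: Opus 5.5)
The paper does not prove this theorem; it quotes it from \cite{brezisfailure,actadim9,martel}. Your outline reconstructs exactly those sources: minimal solutions and the concave transform of Brezis--Cazenave--Martel--Ramiandrisoa, Martel's uniqueness at $\lambda^\star$, and the Cabr\'e--Figalli--Ros-Oton--Serra estimates.

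The step that fails as written is the bound $\sup_{\lambda<\lambda^\star}\|u_\lambda\|_{H^1_0(\Omega)}<\infty$ in Step 3. The interior estimate $\|\nabla u\|_{L^{2+\gamma}(B_{1/2})}\le C\|u\|_{L^1(B_1)}$, combined with the weighted bound $\int_\Omega u_\lambda\,\dist(\cdot,\de\Omega)\,dx\le C$, gives no control of $\nabla u_\lambda$ near $\de\Omega$. Testing stability with $f(u_\lambda)-f(0)$ yields an energy bound only in low dimensions. Since ``weak solution'' in (iii) means $u^\star\in H^1_0(\Omega)$ in every dimension, you need the \emph{global} estimate $\|\nabla u\|_{L^{2+\gamma}(\Omega)}\le C(\Omega)\|u\|_{L^1(\Omega)}$ of \cite{actadim9} for $C^3$ domains. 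Its boundary part is a separate, nontrivial argument. With it, and with $\|u_\lambda\|_{L^1(\Omega)}=\lambda\int_\Omega f(u_\lambda)\varphi\,dx\le C$ (your $\varphi$ with $-\lap\varphi=1$ together with the $\varphi_1$-bound), Step 3 goes through.

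There are also smaller slips.

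\emph{Step 2.} From $\lambda f(t)\ge 2\lambda_1 t-C$ and $\lambda_1\int u\varphi_1=\lambda\int f(u)\varphi_1$ you only get $\lambda_1\int u\varphi_1\le C\int\varphi_1$. That is no contradiction; it is precisely the a priori bound you use in Step 3. You must use $f>0$ to absorb the constant, e.g.\ $f(t)\ge c_0(1+t)$ on $[0,+\infty)$. This gives $\lambda_1\int u\varphi_1>\lambda c_0\int u\varphi_1$, hence $\lambda^\star\le\lambda_1/c_0$.

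\emph{Uniqueness in (i).} Superlinearity only rules out the case where $v$ is unbounded. In general, once $f(t)=a+bt$ on $[0,\sup v]$, the function $v-u_\lambda>0$ in $H^1_0$ solves $-\lap(v-u_\lambda)=\lambda b(v-u_\lambda)$, so $\lambda b=\lambda_1$. Testing the equation of $u_\lambda$ with $\varphi_1$ then gives $a=f(0)=0$, a contradiction.

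\emph{The concave transform.} $\Phi^{-1}((1-\eps)\Phi)$ yields a \emph{bounded} supersolution only if $\int^{+\infty}ds/f(s)<+\infty$. Superlinearity does not ensure this (e.g.\ $f(t)=(1+t)\log(e+t)$). Otherwise one iterates finitely many times as in \cite{dupaigne}, cf.\ \Cref{prop:approximation}.
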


\begin{remark}
A convenient way to construct $u_\lambda$ is via monotone iteration: set $u_{0,\lambda}=0$ and solve
\[
-\lap u_{k+1,\lambda}=\lambda f(u_{k,\lambda})\quad\text{in }\Omega,\qquad u_{k+1,\lambda}=0\quad\text{on }\partial\Omega.
\]
Then for $\lambda$ small one proves that the solutions remain bounded in $L^1(\Omega)$ and $u_{k,\lambda}\uparrow u_{\lambda}$, while for $\lambda$ large there is complete blow-up, in the sense that $\{u_{k,\lambda}(x)\}_k$ diverges for each $x\in\Omega$.
\end{remark}

\smallskip

In the influential work \cite{brezisfailure}, Brezis asked several questions concerning \eqref{eq:lambda}, especially regarding the properties of the extremal solution $u^\star$.
The starting point of the present work is a resolution of a natural scale-invariant formulation of
\cite[Open problem~4]{brezisfailure}:

\begin{center}
{\it ``Suppose $u^\star$ has an isolated singularity at $x_0\in\Omega$. Is it true that
\begin{equation}\label{eq:Qbrezis}
    f'(u^\star(x))\simeq \frac{1}{|x-x_0|^2}\quad\text{as }x\to x_0\text{?''}
\end{equation}}
\end{center}
This conjecture is motivated by two considerations. First, for the model nonlinearities
\[
f(u)=e^u \quad\text{or}\quad f(u)=(1+u)^p,
\]
one can construct explicit radial extremal solutions \(u^\star\) for which the conjecture holds. Second, the scaling \(1/|x|^2\) is the right one when comparing the stability condition \eqref{eq:stabilityintro} with Hardy's inequality, which states that for \(n\ge 3\),
\begin{equation}\label{eq:hardy}
\frac{(n-2)^2}{4}\int_{B_1} \frac{\xi(x)^2}{|x|^2}\,dx \le \int_{B_1} |\nabla \xi(x)|^2\,dx
\qquad \forall\,\xi\in C^1_c(B_1).
\end{equation}
When \(n=2\), Hardy's inequality includes a logarithmic correction:
\begin{equation}\label{eq:2Dhardy}
    \frac14\int_{B_1}\frac{\xi(x)^2}{|x|^2 (1-\log|x|)^2} \,dx \le \int_{B_1} |\nabla \xi(x)|^2\, dx\qquad\forall\,\xi\in C^1_c(B_1).
\end{equation}

\smallskip

If one interprets \eqref{eq:Qbrezis} in a pointwise sense, then Brezis' question has been answered in the negative by Villegas \cite{villegas21}: the quantity $f'(u^\star)$ can oscillate.
More precisely, Villegas managed to construct a convex, increasing nonlinearity $f_0\in C^\infty(\R)$ and a corresponding radial weak stable solution $u^\star_0$ such that
\[
0=\liminf_{r\to 0} r^2f'_0(u^\star_0(r))
<\limsup_{r\to 0} r^2f'_0(u^\star_0(r))=\frac{(n-2)^2}{4}.
\]
This suggests that Brezis' question may only be true in a limsup sense; indeed, still in \cite{villegas21}, Villegas proved the validity of the corresponding limsup statement in the radial case. Our \Cref{thm:sellpoint} will, in particular, extend his result to the general case.

\smallskip

One should note that the pointwise form \eqref{eq:Qbrezis} is meaningful only for isolated singularities and, if interpreted with universal constants at definite scales, requires quantitative separation from the rest of the singular set. An intrinsic formulation, which remains meaningful for arbitrary singular sets and is not merely a technical weakening, is instead in terms of the scale-invariant mass of the stability potential at the singular point:
\begin{equation}\label{eq:Qbrezisinteg}
   \text{{\it Let $u^\star$ be singular at $x_0$. Is $\limsup_{r\to 0}r^2\fint_{B_r(x_0)}\lambda^\star f'(u^\star)$ bounded below by a positive dimensional constant?}}
\end{equation}

\begin{remark}\label{rmk:L1bound}
The scale-invariant quantity
\[
r^2\fint_{B_r(x_0)}\lambda^\star f'(u^\star)
\]
is always bounded above by a dimensional constant (at least away from $\partial\Omega$).
Indeed, fix $r>0$ such that $B_{2r}(x_0)\subset\Omega$. Testing the stability inequality \eqref{eq:stabilityintro} with a cutoff function $\xi$ such that
$\xi=1$ on $B_r(x_0)$, $\xi=0$ on $\Omega\setminus B_{2r}(x_0)$, and $|\nabla \xi|\leq C(n)r^{-1}$, yields
\[
\int_{B_r(x_0)}\lambda^\star f'(u^\star)\,dx
\le \int_{\Omega}\lambda^\star f'(u^\star)\,\xi^2\,dx
\le \int_{\Omega}|\nabla\xi|^2\,dx
\le C(n)r^{n-2}.
\]
Equivalently, $r^2\fint_{B_r(x_0)}\lambda^\star f'(u^\star)\le C(n)$.
Thus \eqref{eq:Qbrezisinteg} can be viewed as asking whether the \emph{smallness} of this scale-invariant quantity forces regularity near $x_0$.
\end{remark}

\subsection{First main result: \texorpdfstring{$\eps$}{}-regularity and the Brezis question}

Our first main result gives a complete answer to the scale-invariant question \eqref{eq:Qbrezisinteg}, proving that at any singular point the scale-invariant mass of $f'(u^\star)$ cannot be arbitrarily small along all scales. As discussed in 
Remark~\ref{rmk:noliminf}, this result is optimal.

\begin{theorem}\label{thm:sellpoint}
Let $u^\star$ be the extremal solution of \eqref{eq:lambda} at $\lambda=\lambda^\star$, and let $x_0\in\Omega$. There is a dimensional constant $C=C(n)>0$ such that either $u^\star$ is classical in a neighborhood of $x_0$, or
\[
\frac1C \le \limsup_{r\to 0} r^{2-n}\int_{B_r(x_0)}\lambda^\star f'(u^\star)\,dx \le C.
\]
\end{theorem}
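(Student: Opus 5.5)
The upper bound is exactly \Cref{rmk:L1bound}: testing stability with a cutoff gives $r^{2-n}\int_{B_r(x_0)}\lambda^\star f'(u^\star)\le C(n)$ for all small $r$. So the content is the lower bound. We argue by contradiction: suppose that for every $\eta>0$ there is $r_0>0$ with
\[
r^{2-n}\int_{B_r(x_0)}\lambda^\star f'(u^\star)\,dx\le \eta\qquad\text{for all }r\le r_0 .
\]
We want to conclude that $u^\star$ is classical near $x_0$. The natural tool is an $\varepsilon$-regularity theorem (the forthcoming \Cref{thm:epsreg}). It should be uniform in the nonlinearity and should state the following. If $u$ is a stable solution of $-\Delta u=f(u)$ in $B_1$ with $\int_{B_1}f'(u)\le\varepsilon_0(n)$, then $u$ is H\"older continuous in $B_{1/2}$, with a bound in terms of $\|u\|_{L^1(B_1)}$.

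\textbf{Rescaling.} Set $u_r(y)=u^\star(x_0+ry)$. This function solves $-\Delta u_r=f_r(u_r)$ with $f_r(t)=r^2\lambda^\star f(t)$, which is still positive, increasing and convex. Moreover $u_r$ is stable for $f_r$, and the rescaled stability potential satisfies
\[
\int_{B_1}f_r'(u_r)\,dy=r^{2-n}\int_{B_r(x_0)}\lambda^\star f'(u^\star)\,dx\le\eta .
\]
Choosing $\eta=\varepsilon_0(n)$ and applying the $\varepsilon$-regularity theorem at a single small scale $r\le r_0$, we get that $u^\star$ is H\"older continuous, hence bounded, in $B_{r/2}(x_0)$. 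Here we use the finite blow-up level $T_f=+\infty$ for the Gelfand $f\in C^2(\R)$. Since $u^\star$ is bounded, $f(u^\star)$ is bounded, and standard elliptic bootstrap ($W^{2,p}$, then $C^{1,\alpha}$, then Schauder, using $f\in C^2$) shows that $u^\star$ is classical near $x_0$. This contradicts the assumption that $u^\star$ is singular at $x_0$.

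Note that this argument only needs the smallness at one scale, so it actually yields the stronger statement $\liminf_{r\to 0}\ge 1/C$ unless $u^\star$ is classical near $x_0$. The $\limsup$ formulation then follows a fortiori.

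\textbf{Main obstacle.} The difficulty lies entirely in the $\varepsilon$-regularity theorem, in particular its uniformity in $f$. One cannot use a monotonicity formula. I would prove it by compactness, combined with the Cabr\'e--Figalli--Ros-Oton--Serra interior estimates for stable solutions. The argument runs as follows. If it failed, we would obtain stable pairs $(u_k,f_k)$ with $\int f_k'(u_k)\to0$ but with no uniform H\"older bound. After normalizing, we extract limits in which $f_k$ converges to some $f_\infty$, possibly with finite blow-up level. The vanishing potential forces the limit to be essentially harmonic, or at least stable with $f_\infty'(u_\infty)=0$ a.e. An improvement-of-flatness (or oscillation decay) iteration across dyadic scales then yields the H\"older modulus. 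For the present theorem only the already stated $\varepsilon$-regularity input is needed.
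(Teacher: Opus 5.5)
\textbf{Gap in the lower bound.} Your upper bound is fine; it is \Cref{rmk:L1bound}. The lower bound, however, rests on a false ingredient. You invoke an $\varepsilon$-regularity statement in which $\int_{B_1} f'(u)\le\varepsilon_0(n)$ at a \emph{single} scale forces H\"older continuity in $B_{1/2}$. That statement does not hold. Neither does the consequence you draw from it, namely $\liminf_{r\to0}\ge 1/C$ at singular points.

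The modified Villegas example of Appendix~\ref{app:Villegas} (see \Cref{rmk:noliminf}) is a counterexample. It is a stable solution in dimension $n\ge10$ with $0\in\sing(u^\star)$ and
$y_k^{2-n}\int_{B_{y_k}} f'(u^\star)\le C y_k^{2-\delta}\to0$.
Rescaling at $r=y_k$ gives stable pairs in $B_1$ with $\int_{B_1}f_r'(u_r)$ as small as you like and $u_r(0)=+\infty$. The Hardy-size part $2(n-2)|x|^{-2}$ of the potential sits at scales $\lesssim y_k^{\beta}\ll y_k$. There it carries negligible $L^1$ mass at scale $y_k$, but it still produces the singularity.

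This is exactly where your compactness sketch breaks. The compactness argument yields only one step of oscillation decay, from scale $1$ to scale $\theta$; this is \Cref{lem:decay}. To iterate across dyadic scales you must re-verify the smallness hypothesis at scale $\theta$. From one-scale information you only get $\theta^{2-n}\int_{B_\theta}f'(u)\le\theta^{2-n}\int_{B_1}f'(u)$, which loses a factor $\theta^{2-n}$ at each step.

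\textbf{How to repair it.} The fix is already available in your setup, because your contradiction hypothesis is stronger than what you use. Negating the lower bound gives $r^{2-n}\int_{B_r(x_0)}\lambda^\star f'(u^\star)<\varepsilon$ for \emph{every} $r\le r_0$, with $\varepsilon=\varepsilon(n)$ fixed. You do not need it for every $\eta>0$; that would be the negation of $\limsup>0$.

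With smallness at all small scales, the argument runs as follows.
\begin{enumerate}
\item \Cref{lem:decay} applies at each $r<r_0$ and gives $\osc_{\theta^{j+1}r}(u^\star)\le\theta^\alpha\osc_{\theta^j r}(u^\star)$. Hence $Q^M_{\theta s}(u^\star)\le Q^M_s(u^\star)$ along $s=\theta^j r$.
\item For the Gelfand nonlinearity $T_f=+\infty$ (not ``finite'', as you wrote), so $f'(Q^M_r(u^\star))<+\infty$. Therefore $(\theta^N r)^2\lambda^\star f'(Q^M_{\theta^N r}(u^\star))<1$ for $N$ large.
\item \Cref{cor:Qdecay} then gives $x_0\in\reg(u^\star)$.
\item \Cref{thm:open} gives $C^{2,\alpha}$ regularity near $x_0$, i.e.\ $u^\star$ is classical there.
\end{enumerate}
This is precisely the paper's route: \Cref{thm:epsreg} and \Cref{cor:introshape} applied to $(u^\star,\lambda^\star f)\in\sol(\Omega)$.

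So the $\varepsilon$-regularity input you need is one with a $\limsup$ hypothesis, i.e.\ smallness at all sufficiently small scales. The remark about upgrading to a $\liminf$ must be dropped.
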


\Cref{thm:sellpoint} is a consequence of the more general \Cref{thm:epsreg} and \Cref{cor:introshape} below.
\begin{remark}\label{rmk:pointwise}
    In the case of an isolated singularity $x_0\in\Omega$, using the universal upper bound to pass from balls to dyadic annuli, \Cref{thm:sellpoint} gives in particular a sequence $x_k\to x_0$ such that $\lambda^\star f'(u^\star(x_k))\ge C(n) |x_k-x_0|^{-2}$. This provides the limsup lower-bound conclusion in the original pointwise formulation of Brezis' question \eqref{eq:Qbrezis}.
\end{remark}
\begin{remark}\label{rmk:noliminf}
A modification of the examples of Villegas (see Appendix~\ref{app:Villegas}) shows that the ``limsup'' in \Cref{thm:sellpoint} cannot be improved to a ``liminf''.
Indeed, we can construct an explicit nonlinearity $f_1\in C^\infty(\R)$ (convex, increasing, positive) and a corresponding radial \textit{singular} weak stable solution $u_1^\star$ such that
\[
\liminf_{r\to 0} r^{2-n}\int_{B_r(x_0)}\lambda^\star f'_1(u^\star_1)=0,
\]
so \Cref{thm:sellpoint} is optimal.
\end{remark}

\medskip

\Cref{thm:sellpoint} is only the starting point of the present paper, which contains a number of new results.
By way of a summary, the following are the new contributions of this work as far as $\eps$-regularity is concerned:
\begin{itemize}
\item The lower bound in \Cref{thm:sellpoint} can be reformulated as a universal $\eps$-regularity theorem; see \Cref{thm:epsreg}.
This was previously known only for specific model nonlinearities, while here $\eps$ is \emph{purely dimensional} and, perhaps surprisingly, does not depend on $f$.
\item We work in the closed class of PDEs obtained as limits of stable solutions of $-\lap u_k=f_k(u_k)$ with $f_k$ positive, increasing, and convex (possibly taking the value $+\infty$).
This class, introduced in \cite{actadim9} and denoted here by $\sol$, is closed (note that both $u$ and $f$ vary) and has natural notions of regular and singular sets.
We prove that the regular set is \emph{always} open; see \Cref{thm:open}.  
\item In \Cref{thm:C1alpha} we identify the sharp obstruction to $\eps$-regularity in the finite-blow-up case, and in \Cref{cor:assmems} we show that MEMS-type growth rules it out.
\end{itemize}

\subsection{Second main result: a dimensional bound on the singular set}

With $\eps$-regularity at hand, one can hope to estimate the size of the singular set for general nonlinearities.
Note that, in this context, no ``dimension reduction principle'' is available due to the lack of a suitable monotonicity formula.
Hence, knowing that no singular solutions exist below a certain dimension does \emph{not} directly imply a corresponding codimension bound on $\sing(u^\star)$.

\smallskip

Our strategy is the following: we show that higher integrability of $f'(u^\star)$ implies quantitative bounds on the size of the singular set.
More precisely, as a direct application of our $\eps$-regularity theorems, we prove that $L^q$ bounds on $f'(u^\star)$ yield $(n-2q)$-dimensional bounds on $\sing(u^\star)$.
Thus, in order to bound $\sing(u^\star)$, it suffices to establish higher integrability estimates on $f'(u^\star)$.\footnote{The idea of proving higher integrability estimates on $f'(u^\star)$ in the stable regime is classical; see for instance \cite{dupaigne} and references therein. However, most available results only concern specific classes of nonlinearities.}

Let $T_f \in (-\infty,+\infty]$ denote the blow-up level of $f$ (see \eqref{eq:defT} below).
In \Cref{thm:dimbound}, under mild asymptotic assumptions on $f$, we prove that
$f'(u^\star) \in L^q_{\loc}$ for every $q<q_f$, where $q_f$ is defined by
\begin{equation}\label{eq:q_f1}
q_f:=1+ 2\liminf_{t\uparrow T_f}\frac{\log f(t) + \int^t_{} \sqrt{\frac{f''(s)}{f(s)}}\,ds}{\log f'(t)}
\end{equation}
(since $\log f'(t) \to +\infty$ as $t\uparrow T_f$, the choice of the lower limit in the integral does not affect the value of $q_f$).
As a consequence, we obtain
\begin{equation}\label{eq:q_f2}
\dim_{\mathcal{H}}\sing(u^\star) \leq n-2q_f.
\end{equation}
Although the expression \eqref{eq:q_f1} may appear somewhat unusual at first glance,
the quantity $q_f$ arises naturally from the structure of the stability inequality underlying our estimates.
Remarkably, it also recovers the optimal exponent in all standard model nonlinearities, for instance for
\[
f(t)=(1+t)^p\ (p>1),\qquad f(t)=e^t,\qquad f(t)=(1-t)_+^{-p}\ (p>0),\quad \ldots
\]
(see \Cref{rem:model}, Section~\ref{sect:compare}, and also \Cref{rmk:liminfqf}).
This agreement with the known sharp examples indicates that $q_f$ is the natural quantity governing the singular set in general,
and leads to the following optimality conjecture.

\begin{conjecture*}
Let $q_f$ be defined by \eqref{eq:q_f1}. Within the smooth asymptotic class of \Cref{thm:dimbound}, the bound \eqref{eq:q_f2} cannot in general be improved.
\end{conjecture*}

We note that, at this level of generality, quantitative bounds on the singular set were not previously available:
earlier results either proved full regularity in low dimensions or obtained bounds on the singular set only for very specific model nonlinearities.

\subsection{Third main result: the two-dimensional case}
In dimension two, \Cref{cor:2Dmems} below shows that the singular set is empty whenever our $\varepsilon$-regularity theorem applies. In particular, if $T_f<+\infty$, this follows under a MEMS-type growth assumption, which is sharp in dimensions $n\ge3$ (see \Cref{ex:r2logr}). We can actually prove a much stronger conclusion: in the full local class $\sol(B_1)$ introduced in \eqref{def:Somega}, every local weak stable solution satisfies a universal interior Hessian estimate (previously known, by a different argument, in the radial case).

\begin{theorem}\label{thm:2Dintro}
Let $(u,f)\in\sol(B_1)$, with $B_1\subset\R^2$. Then
\[
\|D^2u\|_{L^\infty(B_{1/2})}\le C\,\|u\|_{L^1(B_1)},
\]
where \(C\) is universal.
In particular, if $\sing(u)\neq\emptyset$, then necessarily
\[
T_f<+\infty
\qquad\text{and}\qquad
f(T_f)<+\infty.
\]
\end{theorem}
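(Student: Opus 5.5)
We plan to reduce the Hessian bound to a scale-invariant, universal bound on the stability potential, namely
\[
\sup_{B_{3/4}} f'(u)\le C\,\|u\|_{L^1(B_1)}\text{-dependent quantity},
\]
and then to apply linear elliptic theory to the differentiated equation. We first reduce to smooth pairs. The class $\sol(B_1)$ is defined through limits of stable solutions of $-\lap u_k=f_k(u_k)$ with $f_k$ positive, increasing and convex. The estimate is closed under $L^1$ convergence, since $D^2u_k$ bounded in $L^\infty$ passes to the limit weakly-$*$. So it suffices to treat classical stable solutions with smooth $f$, provided the constant does not depend on $f$. The key structural fact in 2D is the following. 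Testing stability with $\xi=|\nabla u|\eta$ (the Sternberg--Zumbrun computation) gives
\[
\int \Big(|D^2u|^2-\big|\nabla|\nabla u|\big|^2\Big)\eta^2\le\int|\nabla u|^2|\nabla\eta|^2 .
\]
In dimension two the left integrand equals $|\nabla u|^2$ times the sum of the squared curvature of the level sets and the squared tangential derivative of $|\nabla u|$. We intend to combine this with the Cabré-type estimate, available from \cite{actadim9} and stated for $\sol$ earlier, which gives $\|\nabla u\|_{L^2(B_{7/8})}\le C\|u\|_{L^1(B_1)}$ and in 2D also $u\in L^\infty$ with a universal bound. Universality in $f$ is automatic there, since only convexity and monotonicity are used.

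Next we would bound $f(u)=-\lap u$ in $L^\infty$. Convexity of $f$, $f\ge0$ and stability give $f'(u)\ge0$ and $\int f'(u)\eta^2\le\int|\nabla\eta|^2$. Since $\int|\nabla\eta|^2$ is scale-invariant in 2D, this mass bound holds at every scale with a logarithmic cutoff. Choosing $\eta$ as the 2D log-cutoff centered at any $x_0\in B_{3/4}$ shows $\int_{B_r(x_0)}f'(u)\lesssim 1/\log(1/r)$. Together with \Cref{thm:epsreg} (which applies since this quantity can be made uniformly small at small scales), this yields H\"older continuity with a universal constant. The next step is a Hessian bound. We differentiate the equation: $v=\de_e u$ solves $-\lap v=f'(u)v$, and stability says $f'(u)$ is a Hardy-type potential. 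We then run a Moser/De Giorgi iteration on $v$, using stability in place of the usual smallness of the potential. The iteration is driven by the test functions $|v|^{\gamma}\eta$ and yields $\|\nabla u\|_{L^\infty(B_{5/8})}\le C\|u\|_{L^1}$. For second derivatives we aim to show $f(u)\in L^\infty$ and in fact $\nabla(f(u))=f'(u)\nabla u\in L^\infty$. The second claim is the main obstacle: $f'(u)$ itself need not be bounded pointwise (compare Villegas-type oscillation). What is needed is that $f'(u)|\nabla u|$ is bounded, i.e.\ that $f(u)$ is Lipschitz. The first thing we would try is to use the equation $\mathrm{div}(f(u)\nabla u)$-type identities and the curvature control from the Sternberg--Zumbrun inequality. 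Integrating $f'(u)|\nabla u|^2$ along level sets via the coarea formula, the 2D flux $\int_{\{u=t\}}|\nabla u|=\int_{\{u>t\}}f(u)$ is controlled, and convexity of $f$ should convert this into a bound on $f(u)$ at the maximum level.

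Once $f(u)$ is bounded, $\lap u\in L^\infty$ gives only $C^{1,\alpha}$. For $C^{1,1}$ we would instead use convexity. The function $w=\de_{ee}u$ satisfies $-\lap w=f'(u)w+f''(u)(\de_e u)^2\ge f'(u)w$, so $w$ is a supersolution of the linearized operator. Stability then gives a weak Harnack/lower bound for $w^-$, bounding $\de_{ee}u$ from below by $-C\|u\|_{L^1}$ in every direction. Since $\lap u=-f(u)\le0$ and $f(u)\le C$, the upper bound on $\de_{ee}u$ follows from the trace. This gives $|D^2u|\le C\|u\|_{L^1}$ on $B_{1/2}$. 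For the final statement: if $T_f=+\infty$, or if $f(T_f)=+\infty$, then boundedness of $f(u)$ together with continuity forces $u$ to stay below $T_f$ with $f(u)$ bounded, so $u$ is classical and $\sing(u)=\emptyset$.
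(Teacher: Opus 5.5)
Your proposal has genuine gaps at both non-routine steps: the universal bound on $f(u)$ and the passage from that bound to $C^{1,1}$.

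\textbf{The bound on $f(u)$.} The decisive step is the universal estimate $f(u)\le C\|u\|_{L^\infty(B_1)}$, and you do not give an argument for it. The coarea/flux remark is only a heuristic. The identity $\int_{\{u=t\}}|\nabla u|=\int_{\{u>t\}}f(u)$ requires $\{u>t\}\Subset B_1$, and nothing in it turns flux control into a pointwise bound at the top level. Moreover, the intermediate targets you set are false in general: neither $\sup f'(u)<\infty$ nor Lipschitz continuity of $f(u)$ holds. In the two-dimensional example of \Cref{rmk:finitef}, $f'(u(x))\sim a|x|^{-2}\log^{-2}(1/|x|)$ and $|\nabla u(x)|\sim|x|/2$. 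Hence $f'(u)|\nabla u|\sim\tfrac a2|x|^{-1}\log^{-2}(1/|x|)\to+\infty$. Indeed, $f(u)$ Lipschitz would force $u\in C^{2,\alpha}$, which fails there.

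The paper gets the bound in two genuinely new steps. First it proves $\int_{B_{1/2}}f(u)f'(u)\le C\|u\|_{L^\infty(B_1)}$ (\Cref{prop:ff'}). This uses \Cref{lem:handg} with $h=\max\{f,K\}$ and $g(t)=\int_\kappa^{\max\{t,\kappa\}}(f')^2$, together with a covering lemma. Through $\lap f(u)\ge -f(u)f'(u)$ this gives $f(u)\in L^p_{\loc}$ for every $p<\infty$. Second, it derives a delayed Riccati-type ODE (\Cref{lem:quasiavg2D}) for $W(r)=\fint_{B_r}f(u)+A_0$ in the variable $t=\log(1/r)$. The positive part of the forcing is controlled, via harmonic replacement, by $r\int_{\partial B_r}f'(u)$, and it is made small by the 2D logarithmic cutoff. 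This yields $\sup_{r<r_0}W(r)\le CW(r_0)$, hence the pointwise bound.

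\textbf{The linear steps.} Moser iteration for $\de_e u$ and a weak Harnack bound for $(\de_{ee}u)^-$ ``using stability in place of smallness'' cannot work. Both use only that $V=f'(u)$ satisfies $\int V\xi^2\le\int|\nabla\xi|^2$. Take $-\lap v=Vv$, test the equation with $|v|^{\gamma-1}v\eta^2$, and test stability with $|v|^{(\gamma+1)/2}\eta$. The gradient term $\int|v|^{\gamma-1}|\nabla v|^2\eta^2$ then carries the coefficient $\gamma-\frac{(\gamma+1)^2}{4}=-\frac{(\gamma-1)^2}{4}\le0$, so no exponent ever gains. Concretely, for $0<a<\tfrac14$ the potential $V=a|x|^{-2}\log^{-2}(1/|x|)$ is stable in a small ball by \eqref{eq:2Dhardy}. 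Yet $v=(\log(1/|x|))^\beta$, with $\beta(1-\beta)=a$ and $\beta<\tfrac12$, is an unbounded $H^1$ solution of $-\lap v=Vv$. Its negative $-v$ is a supersolution with unbounded negative part, so the weak Harnack principle you invoke is false.

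The missing idea is to use convexity of $f$ at the maximal level. With $M=\sup_{B_{3/4}}u$, convexity gives $0\le f'(u)(M-u)\le f(M)\le C\|u\|_{L^\infty(B_1)}$. At a point $y$ where $M-u(y)$ is small (the other case is easy), rescale at the intrinsic scale $\rho^2=(M-u(y))/f(M)$. Harnack keeps $M-u\ge\tfrac12(M-u(y))$ near $y$, so after rescaling the potential in the differentiated equation is bounded by $2$. Standard estimates then give $|D^2u(y)|\le Cf(M)$. An interpolation and covering argument finally replaces $\|u\|_{L^\infty}$ by $\|u\|_{L^1}$.

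Your trace argument for the upper bound on $\de_{ee}u$, and your deduction of the dichotomy for $\sing(u)$, are fine once the bound on $f(u)$ is available.
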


Note that this theorem does not imply that the singular set is empty: indeed, \(T_f\) may be finite with $f(T_f)<+\infty$, while \(f'(T_f)=+\infty\). As discussed in \Cref{rmk:finitef} and \Cref{rmk:weiss-monneau}, this $C^{1,1}_{\loc}$ regularity is expected to be optimal.

The key new ingredient is a universal estimate for \(f(u)\), proved by reducing suitable averages of \(f(u)\) to a delayed ODE. 
Once \(f(u)\) is universally bounded, an intrinsic rescaling argument of obstacle-problem type near high-level contact points yields an \(L^\infty\)-based Hessian estimate, and a final interpolation step gives the \(L^1\) bound above.
\bigskip

In the next section, we state our results precisely and extend the discussion above.

\section{Statement of the results}

\subsection{Local weak stable solutions}

In order to study blow-up limits of stable solutions (where both the solution and the nonlinearity may vary),
we work in the closed class introduced in \cite{actadim9}.
Throughout, we allow nonlinearities that may take the value $+\infty$ and may blow up at a finite level.

\smallskip

Consider the class of nonlinearities
\begin{equation}\label{eq:defofC}
    \NN := \big\{ f\colon \R \to [0,+\infty] : f \text{ is lower semicontinuous, nondecreasing, and convex}\big\}.
\end{equation}
For $f\in\NN$ we denote by $f'$ the \emph{left derivative}, namely
\begin{equation*}
f'(t):=\sup_{\delta>0}\frac{f(t)-f(t-\delta)}{\delta}.
\end{equation*}
By convexity, $f'$ is defined everywhere on $\R$, is nondecreasing, and is left-continuous.

\medskip

We also introduce the (possibly infinite) blow-up level of $f$:
\begin{equation}\label{eq:defT}
T_f:=\inf\{ s\in\R : f'(s)=+\infty \}\in(-\infty,+\infty].
\end{equation}
When $T_f<+\infty$, we will refer to $f$ as a \emph{singular} nonlinearity; otherwise ($T_f=+\infty$) we call it \emph{regular}.
We use the convention
\[
f(T_f):=\lim_{t\uparrow T_f}f(t)\in[0,+\infty],
\]
including the case \(T_f=+\infty\).

\begin{definition}\label{def:weakstablesol}
    Let $f\in\NN$ and $\Omega\subset\R^n$ be open.
    We say that $u\in H^1_\loc(\Omega)$ is a \emph{local weak stable solution} of
\begin{equation}\label{eq:PDE}
-\Delta u = f(u)
\end{equation}
if $f(u),\,f'(u)\in L^1_\loc(\Omega)$ and, for all $\xi \in C^1_c(\Omega)$, it holds
\begin{equation}\label{eq:stability}
	\int_\Omega \nabla u\cdot\nabla\xi\,dx = \int_\Omega f(u)\,\xi\,dx
	\qquad\text{and}\qquad
	\int_\Omega |\nabla\xi|^2\,dx \ge \int_\Omega f'(u)\,\xi^2\,dx .
\end{equation}
\end{definition}

We denote the class of such pairs by
\begin{equation}\label{def:Somega}
    \sol(\Omega):=\big\{(u,f): \text{$u$ is a local weak stable solution of \eqref{eq:PDE} in }\Omega\big\}
    \subset H^1_\loc(\Omega)\times \NN .
\end{equation}

\begin{remark}
In the framework of the Gelfand problem \eqref{eq:lambda} (with the notations of \Cref{thm:lambda-star}),
one has $(u_\lambda,\lambda f)\in\sol(\Omega)$ for all $\lambda\le \lambda^\star$.
\end{remark}

\medskip

Since $f\ge 0$, equation \eqref{eq:PDE} implies $\Delta u\le 0$ in the weak sense, hence $u$ is superharmonic.
In particular, $u$ admits a canonical pointwise representative (possibly taking the value $+\infty$ somewhere) defined at every $x\in\Omega$ by
\[
u(x)=\lim_{r\downarrow 0}\fint_{B_r(x)} u\,dx
      =\sup_{r>0}\fint_{B_r(x)} u\,dx
\quad\in\quad (-\infty,T_f],
\]
and this representative is lower semicontinuous. We will always work with this pointwise-defined representative.

\medskip

There is a natural notion of regular and singular sets (relative to the blow-up level $T_f$).

\begin{definition}
We define
\[
\reg(u):=\{x\in\Omega:\ u(x)<T_f\},
\qquad
\sing(u):=\{x\in\Omega:\ u(x)=T_f\}.
\]
\end{definition}

Our first result in this abstract framework shows that, despite $u$ being \emph{a priori} only lower semicontinuous,\footnote{\,If $u$ were upper semicontinuous, then the openness of $\reg(u)$ would be obvious.}
the set $\reg(u)$ is always open and $u$ is continuous (with the correct interpretation near $\sing(u)$).

\begin{theorem}\label{thm:open}
Let $(u,f)\in \sol(\Omega)$. Then $\reg(u)$ is open, the map
\[
u\colon \Omega \to (-\infty,T_f]
\]
is continuous as an extended-real-valued map,\footnote{\,When $T_f=+\infty$, continuity means in particular that $u(x)\to+\infty$ as $\dist(x,\sing(u))\to 0$.}
and $u$ is of class $C^{2,\alpha}$ in $\reg(u)$ for all $\alpha<1$.
\end{theorem}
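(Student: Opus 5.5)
The plan is to prove openness of $\reg(u)$ and continuity together, by showing that near any point where $u$ is strictly below $T_f$ the function $u$ is bounded above by a level strictly below $T_f$. Once we have local boundedness of this kind, $f(u)$ and $f'(u)$ are bounded near the point (because $f$ is finite and Lipschitz on $(-\infty,T_f-\delta]$). Then $-\Delta u=f(u)\in L^\infty$ gives $u\in W^{2,p}_\loc\subset C^{1,\alpha}_\loc$. Since $f$ is locally Lipschitz below $T_f$, $f(u)\in C^{0,1}_\loc$, and Schauder theory yields $C^{2,\alpha}$ for every $\alpha<1$.

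The key step is an upper bound near points of $\reg(u)$. Fix $x_0$ with $u(x_0)<T_f$. The superharmonic representative satisfies $u(x_0)=\sup_r\fint_{B_r(x_0)}u$, so all averages around $x_0$ are at most $u(x_0)$. I would combine this with the stability inequality to get control of $f'(u)$, hence of $f(u)$, in an integral sense. Testing stability with cutoffs gives $\int_{B_r}f'(u)\le Cr^{n-2}$, as in \Cref{rmk:L1bound}. Convexity gives $f(u)\le f(t_0)+f'(u)(u-t_0)_+$ for a level $t_0<T_f$. The goal is to show that $\fint_{B_r(x_0)}f(u)$ is controlled, and that $-\Delta u$ has small mass in small balls.

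The mass bound is what keeps the pointwise value from jumping. Write $u=h+G*\mu$ locally, with $\mu=f(u)\,dx$ and $h$ harmonic. One wants $\limsup_{x\to x_0}u(x)\le u(x_0)$, i.e.\ upper semicontinuity at $x_0$. The natural route is a local boundedness estimate for stable solutions in the style of Cabré–Figalli–Ros-Oton–Serra, applied to the truncations $\min(u,t)$. Alternatively, one can apply the Harnack inequality to $u$ below a level: since $\Delta u\le0$, and using the equation with $f$ finite on sublevels, one compares $\sup_{B_\rho}u$ with averages plus $\rho^2\|f(u)\|$-type terms. I expect to prove an estimate of the form
\[
\sup_{B_{r/2}(x_0)} u\le \fint_{B_r(x_0)}u+ \omega(r),
\]
with $\omega(r)\to0$, using the integrability of $f(u)$ (from stability via the convexity trick) to make the Newtonian potential of $f(u)$ small in a Morrey sense. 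This yields upper semicontinuity, and combined with lower semicontinuity it gives continuity at regular points and openness of $\reg(u)$.

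Continuity at singular points is automatic: $u\le T_f$ everywhere, and $u$ is lower semicontinuous, so $u(x)\to T_f=u(x_0)$ as $x\to x_0\in\sing(u)$.

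The main obstacle is the upper semicontinuity step. Near $x_0$ the potential of $f(u)$ could concentrate, since $f(u)$ is only $L^1$ a priori, and one must exploit stability rather than the equation alone. My first attempt would use the Cabré-type interior $L^\infty$ bound for stable solutions in the regime where $u\le t_0<T_f$ on most of the ball. This bound applies to the nonlinearity truncated at level $t_0$ (still convex and finite, with stability preserved on the set $\{u<t_0\}$ after a comparison argument). If that fails, I would instead derive a Morrey estimate $\int_{B_r}f(u)\le Cr^{n-2+\gamma}$ near regular points by testing stability with $\xi=(T-u)_-$-type functions.
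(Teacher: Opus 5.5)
There is a genuine gap, and it sits exactly at the step you flag as the main obstacle. The local upper bound (upper semicontinuity) at a point $x_0$ with $u(x_0)<T_f$ is essentially the whole content of the theorem, and neither of your candidate mechanisms delivers it.

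\emph{The $L^\infty$ route.} The Cabr\'e--Figalli--Ros-Oton--Serra interior $L^\infty$ estimate holds only for $n\le 9$; there it already gives continuity directly. It is false for $n\ge 10$: for instance $u=-2\log|x|$ with $f(t)=2(n-2)e^t$ is a stable singular solution. It also has no localized version valid where ``$u\le t_0$ on most of the ball'' unless one already has an $\varepsilon$-regularity mechanism, which is what you are trying to build. Truncating the nonlinearity goes the wrong way. For a finite $\tilde f\le f$, $u$ is only a supersolution of $-\Delta w=\tilde f(w)$, so comparison produces solutions \emph{below} $u$, which cannot bound $u$ from above.

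\emph{The Morrey route.} This one is circular. Stability gives only the critical bound $\int_{B_r}f'(u)\le Cr^{n-2}$, with no smallness. The inequality $f(u)\le f(t_0)+f'(u)(u-t_0)_+$ contains the factor $(u-t_0)_+$, whose boundedness near $x_0$ is precisely what is unknown. A critical Morrey bound on $\Delta u$ does not make the Newtonian potential small; the logarithmic example above saturates it.

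Note that $u(x_0)<T_f$ only controls the averages $\fint_{B_r(x_0)}u$. A general superharmonic function with a finite value at $x_0$ can reach $T_f$ at points accumulating at $x_0$, so your estimate $\sup_{B_{r/2}}u\le\fint_{B_r}u+\omega(r)$ is, as it stands, a restatement of what must be proved.

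The missing idea is to propagate averaged quantities, mean value plus mean oscillation, rather than pointwise ones. It has three steps.
\begin{enumerate}
\item First, $\osc_r(u)\to0$ at $x_0$. The blow-ups $u(x_0+r_j\,\cdot)$, with nonlinearities $r_j^2f$, have averages on every $B_R$ bounded by $u(x_0)+C\|u\|_{L^1}$. By the compactness of $\sol$ they converge to a pair $(v,g)$ with $g\equiv0$ below $\sup v$. So $v$ is an entire harmonic function with bounded averages, hence constant.
\item Second, prove by compactness an oscillation-decay lemma (\Cref{lem:decay}). If $|\{f'(u)\ge\lambda\}\cap B_{1/2}|$ is tiny, every limit has a $\lambda$-Lipschitz nonlinearity on its range. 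It is therefore $C^1$ with bounds, which gives $\osc_\theta\le\theta^\alpha\osc_1$.
\item Third, make the condition self-propagating. By Markov, the smallness condition is implied by $r^2f'(\avg_r(u)+M\osc_r(u))<\lambda$. This condition propagates to all scales $\theta^j r$, because $Q^M_r$ does not increase along the iteration (\Cref{cor:Qdecay}).
\end{enumerate}
Since $\avg_r(u)\uparrow u(x_0)<T_f$ and $\osc_r(u)\to0$, the condition holds at some scale at $x_0$. It is stable under small translations of the center, so it holds at all nearby points. This gives $u<T_f$ in a neighborhood, with a uniform Campanato/H\"older modulus.

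After that, your bootstrap to $C^{2,\alpha}$ and your argument for continuity at singular points (lower semicontinuity plus $u\le T_f$) are correct.
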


\begin{remark}\label{rmk:salt}
When $T_f<+\infty$, one can design $f$ in such a way that $f'(u(x_0))=+\infty$ while $f(u(x_0))<+\infty$, an explicit example is provided in \Cref{rmk:finitef} below along with a more detailed discussion. We decided to include these points in the singular set.
\end{remark}

\medskip

The class $\sol$ enjoys a strong closedness property, which will be used repeatedly in the proofs of our $\eps$-regularity results.
This is essentially contained in \cite{actadim9}; however, since the proof of \eqref{eq:specification} is only implicit there,
we fill the details in Appendix~\ref{app:ACTA}.

\begin{theorem}[{\cite[Theorem 4.1]{actadim9}}]\label{thm:compactness}
Let $\{(u_k,f_k)\}_{k\in\N}\subset\sol(\Omega)$ and assume
\[
u_k\to u\quad\text{in }L^1_\loc(\Omega).
\]
Then there exists $f\in \NN$ such that $(u,f)\in \sol(\Omega)$, $u_k\to u$ strongly in $W^{1,2}_\loc(\Omega)$, and
\[
f_k\to f \quad\text{locally uniformly on compact subsets of }(-\infty,\sup_{\Omega}u).
\]
Furthermore, if $u$ is not constant, 
\begin{equation}\label{eq:specification}
    f_k(u_k(x)) \to f(u(x))\quad\text{ as }k\to+\infty,
\end{equation}
for almost every $x\in \Omega$.
\end{theorem}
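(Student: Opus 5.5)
We need to prove the compactness theorem, and in particular \eqref{eq:specification}, which the paper says is only implicit in \cite{actadim9}. The first part (existence of $f$, strong $W^{1,2}_\loc$ convergence, local uniform convergence of $f_k$) we take from \cite[Theorem 4.1]{actadim9}; what needs filling in is the a.e. convergence $f_k(u_k)\to f(u)$ when $u$ is not constant.

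\textbf{Step 1: convergence of the measures.} Since $-\lap u_k=f_k(u_k)\ge0$ and $u_k\to u$ in $L^1_\loc$, the Radon measures $f_k(u_k)\,dx$ converge weakly to $-\lap u=f(u)\,dx$. Strong $W^{1,2}_\loc$ convergence plus stability also gives uniform $L^1_\loc$ bounds on $f_k'(u_k)$ (cutoff test as in \Cref{rmk:L1bound}). Passing to a subsequence, we may assume $u_k\to u$ a.e.

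\textbf{Step 2: pointwise convergence where $u<\sup u$.} Fix $x$ with $u_k(x)\to u(x)<\sup_\Omega u$. Then $u(x)$ lies in a compact subset of $(-\infty,\sup_\Omega u)$, where $f_k\to f$ uniformly and $f$ is finite and continuous. Indeed, $f$ is convex and finite on an interval whose interior contains $u(x)$. Hence $f_k(u_k(x))\to f(u(x))$.

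\textbf{Step 3: the level set $\{u=\sup u\}$.} This is the main obstacle. We must show that the set $E=\{x: u(x)=\sup_\Omega u=:M\}$, when $M<\infty$, either has measure zero or is handled separately. Since $u\in H^1_\loc$, we have $\nabla u=0$ a.e. on $E$, so $\lap u=0$ a.e. on $E$ in the sense that the absolutely continuous part of $-\lap u$ restricted to $E$ vanishes. Thus $f(u)=0$ a.e. on $E$ if $f(M)$ makes sense there; but $f$ is only determined as a limit below $M$.

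We first try to prove $|E|=0$ when $u$ is nonconstant. Since $u$ is superharmonic and attains its maximum $M$ on $E$, a positive-measure set, the plan is as follows. On $E$, $f(u)$ must equal $f(M)$, so $f(M)=0$ if $|E|>0$. Then $f\equiv0$ on $(-\infty,M]$, so $u$ is harmonic. A harmonic function equal to its supremum on a positive-measure set is constant, by unique continuation on each connected component; we assume $\Omega$ is connected or argue componentwise.

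It remains to verify $f_k(u_k)\to0$ a.e. on $E$ in this degenerate case, and, more importantly, to rule it out entirely via nonconstancy. So if $u$ is not constant, $|E|=0$, and Step 2 gives a.e. convergence. The delicate point is justifying that the a.e. value of $f(u)$ on $E$ is forced: we use $f_k(u_k)\ge0$, Fatou, and the weak limit $f(u)\,dx$, together with $\nabla u=0$ a.e. on $E$, to deduce $f(u)\mathbf 1_E=0$ and hence $f(M)=0$.

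Finally, we upgrade from the subsequence to the full sequence. The limit $f$ is uniquely determined by $u$ on $(-\infty,M)$ through $-\lap u=f(u)$, on the range of $u$ away from $E$. So every subsequence has a further subsequence converging a.e. to the same limit. Pointwise a.e. convergence itself does not follow from sub-subsequences, but since $u_k\to u$ a.e. is only available along subsequences anyway, the statement is read in that sense, or obtained from convergence in measure.
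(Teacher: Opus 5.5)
\textbf{There is a genuine gap in Step 3.} Your reduction matches the paper's. At points with $u(x)<\sup_\Omega u$, local uniform convergence $f_k\to f$ gives $f_k(u_k(x))\to f(u(x))$; your Step 2 is exactly the appendix argument. So everything rests on showing $|E|=0$ for $E=\{u=\sup_\Omega u\}$ when $u$ is nonconstant, and that step is not proved.

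From $u\in H^1_\loc$ you only get the first-order fact $\nabla u=0$ a.e.\ on $E$. This does not imply $\Delta u=0$ a.e.\ on $E$. That is second-order information, and it needs something like $u\in W^{2,1}_\loc$, so that Stampacchia's theorem can be applied to each $\partial_i u$. Fatou and the weak convergence $f_k(u_k)\,dx\rightharpoonup f(u)\,dx$ do not supply it either. They only identify the limit measure, which you already know is $f(u)\,dx$ with $f(u)=f(M)$ on $E$. You also tacitly assume $f(M)<+\infty$, but $M=T_f$ with $f(T_f)=+\infty$ is allowed in the class.

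\textbf{How the paper closes this step.} It splits into two cases:
\begin{itemize}
\item If $\sup u=T_f$, then $f'(u)=+\infty$ on $E$, and $f'(u)\in L^1_\loc$ forces $|E|=0$.
\item If $\sup u<T_f$, then $u\in C^2$ (cf.\ \Cref{thm:open}), so $\Delta u=0$ a.e.\ on $E$. Hence $f(\sup u)=0$, $u$ is harmonic, and its interior maximum makes it constant.
\end{itemize}

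\textbf{How to repair your own argument.} Split on $f(M)$ instead:
\begin{itemize}
\item If $f(M)=+\infty$, then $|E|=0$ because $f(u)\in L^1_\loc$.
\item If $f(M)<+\infty$, then $0\le -\Delta u\le f(M)$, so $u\in W^{2,p}_\loc$ for all $p<\infty$. Apply Stampacchia to $\partial_i u$, which vanishes a.e.\ on $E$; this gives $D^2u=0$ a.e.\ on $E$. Hence $f(M)=0$, and you conclude as you planned: $f\equiv0$ on $(-\infty,M]$, $u$ is harmonic, and the maximum principle on each component gives constancy.
\end{itemize}
This version has a small advantage: it does not use $f'(T_f)=+\infty$. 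So it also covers a nonlinearity that jumps to $+\infty$ just after $T_f$ with finite left slope.

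Your closing remarks on subsequences are consistent with the paper, which likewise passes to a subsequence along which $u_k\to u$ a.e.
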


\medskip

In the next subsections we state our $\eps$-regularity results.
We will distinguish between regular nonlinearities ($T_f=+\infty$) and singular nonlinearities ($T_f<+\infty$). We will address separately the two-dimensional case in \Cref{sec:2D}.

\subsection{\texorpdfstring{$\eps$}{}-regularity \`a la Brezis for regular nonlinearities}
Here, the expression ``\`a la Brezis'' refers to the connection with Brezis' question \eqref{eq:Qbrezis}:
the scale-invariant size of $f'(u)$ at a point controls whether that point can be singular.

\begin{theorem}\label{thm:epsreg}
There exists $\eps=\eps(n)>0$ such that the following holds.
Assume $(u,f)\in\sol(B_1)$, $T_f=+\infty$, and let $x_0\in B_{1/2}$.
If
\begin{equation}\label{eq:limsup}
\limsup_{r\downarrow 0} r^{2-n}\int_{B_r(x_0)} f'(u)\,dx <\eps,
\end{equation}
then $x_0\in \reg(u)$.
\end{theorem}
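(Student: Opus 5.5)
We argue by contradiction and compactness, using the closedness of $\sol$ (\Cref{thm:compactness}) together with the openness and continuity of \Cref{thm:open}. Suppose the statement fails. Then for $\eps_k\downarrow 0$ there are pairs $(u_k,f_k)\in\sol(B_1)$ with $T_{f_k}=+\infty$ and points $x_k\in B_{1/2}$ such that $x_k\in\sing(u_k)$, that is $u_k(x_k)=+\infty$, while the $\limsup$ in \eqref{eq:limsup} is below $\eps_k$.

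\emph{Step 1: normalize at a good scale.} By the $\limsup$ hypothesis there is $r_k>0$ with $\rho^{2-n}\int_{B_\rho(x_k)}f_k'(u_k)\le\eps_k$ for all $\rho\le r_k$. We rescale
\[
v_k(y):=u_k(x_k+r_ky)-a_k,\qquad g_k(t):=r_k^2f_k(t+a_k),
\]
where $a_k$ is an average of $u_k$ on an annulus $B_{r_k}\setminus B_{r_k/2}$ around $x_k$ (this average is finite because $u_k\in L^1_\loc$). Then $(v_k,g_k)\in\sol(B_1)$, $v_k(0)=+\infty$, $T_{g_k}=+\infty$, and $\rho^{2-n}\int_{B_\rho}g_k'(v_k)\le\eps_k$ for all $\rho\le1$. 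Next we need uniform $L^1$ bounds on $v_k$, which we get from a stable-solution estimate in the spirit of \cite{actadim9}. By the Cabré--Figalli--Ros-Oton--Serra theory, $\|\nabla v_k\|_{L^2(B_{1/2})}$ is controlled by $\|v_k\|_{L^1(B_1)}$, and the subtracted constant keeps this finite; choosing $a_k$ carefully, for instance as a median-type normalization, we obtain a subsequence with $v_k\to v$ in $L^1_\loc(B_{1/2})$. Applying \Cref{thm:compactness}, we get a limit $(v,g)\in\sol(B_{1/2})$ with $v_k\to v$ in $W^{1,2}_\loc$.

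\emph{Step 2: identify the limit.} By Fatou and \eqref{eq:specification} (in the case where $v$ is nonconstant), $\int_{B_\rho}g'(v)\le\liminf_k\int g_k'(v_k)=0$. Hence $g'(v)=0$ a.e., so $g(v)$ equals the constant $c_0=\min g$ on the range of $v$, and $v$ solves $-\lap v=c_0$; in particular $v$ is smooth and $v<T_g$. If $v$ is constant, the same conclusion holds trivially.

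\emph{Step 3: the main obstacle.} We must pass singularity to the limit, i.e.\ contradict $v_k(0)=+\infty$. Since $u$ is only lower semicontinuous, $L^1$ convergence alone does not prevent the singular points from disappearing in the limit, and this is where the real difficulty lies. The plan is to prove a quantitative, scale-invariant form of the regularity underlying \Cref{thm:open}: if $(w,h)\in\sol(B_1)$ satisfies $\int_{B_1}h'(w)\le\delta(n)$ and $\|w\|_{L^1(B_1)}\le1$, then $\sup_{B_{1/2}}w\le C(n)$. To prove this, we test stability with $\xi=\eta\,h(w)^{\gamma}$-type functions, or use $|\nabla w|\eta$ as in the Cabré approach, to obtain $L^p$ bounds on $h(w)$ for some $p>n/2$. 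The key point is that the smallness of $\int h'(w)$, combined with convexity, gives $h(w)\lesssim h(0)+h'(w)(w)_+$. Since $h'(w)$ is small in the Morrey space $L^{1,n-2}$, an iteration over dyadic balls (Campanato/Morrey type) should show that $-\lap w$ is a measure with small Morrey norm, and then Adams' potential estimates yield $w\in L^\infty$. Applied to $v_k$ at scale $1$, this gives $v_k(0)\le C$, which contradicts $v_k(0)=+\infty$. Because the estimate needs only the smallness of $\int h'(w)$ and no further information on the nonlinearity, the constant $\eps=\eps(n)$ is independent of $f$.

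The hardest point is controlling $f(u)$ by $f'(u)$ without any growth assumption on $f$. If the direct route fails, we would instead use the compactness above together with continuity (\Cref{thm:open}) in a blow-up-at-the-first-bad-scale argument, choosing $r_k$ at the first scale where $\osc v_k$ becomes large.
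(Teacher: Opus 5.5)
There is a genuine gap in Step~3, exactly where you locate the difficulty, and the lemma you propose to close it is false.

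\emph{Single-scale smallness cannot bound $w$.} Take the modified Villegas example of Appendix~\ref{app:Villegas} ($n\ge10$, $T_f=+\infty$). It is singular at $0$ and satisfies $y_k^{2-n}\int_{B_{y_k}}f'(u^\star)\,dx\to0$ along the scales $y_k$ of \Cref{prop:villegasmod}. Rescale at scale $y_k$ and normalize by $\avg_{y_k}$ and $\osc_{y_k}$ as in Step~1 of \Cref{lem:decay}. This produces pairs $(w,h)\in\sol(B_1)$ with $\fint_{B_1}|w|=1$, $\int_{B_1}h'(w)\to0$, and $w(0)=+\infty$.

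\emph{The sketched mechanism also fails on its own terms.} First, the hypothesis makes the Morrey quantity of $f'(u)$ small only at the center $x_0$; at other centers stability only gives a bound by $C(n)$. Second, the inequality $h(w)\lesssim h(0)+h'(w)\,w_+$ transfers smallness to $-\Delta w$ only if $w$ is already bounded, which is circular. Third, even a nonnegative measure with small $L^{1,n-2}$ Morrey norm only has a BMO, exponentially integrable potential, not a bounded one: $\varepsilon|x|^{-2}$ has potential comparable to $\varepsilon\log(1/|x|)$. For the same reason, no $L^p$ bound on $h(w)$ with $p>n/2$ can follow from a single-scale hypothesis, since it would exclude the example above.

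\emph{A smaller issue in Step~1.} Subtracting a constant does not give $L^1_{\loc}$ compactness: take $f_k\equiv0$ and $v_k=kx_1$. You must also divide by the oscillation, rescaling the nonlinearity so that $g_k'(v_k)$ is unchanged.

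\emph{The missing idea.} Apply compactness only to statements about averaged quantities, which (unlike pointwise values of a lower semicontinuous function) pass to $L^1$ limits. Then iterate on the fixed solution.

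This is \Cref{lem:decay}: if $|\{f'(u)\ge\lambda\}\cap B_{1/2}|\le\eta$, then $\osc_\theta(u)\le C(n)(1+\lambda)^{n+1}\theta\,\osc_1(u)$. By Markov it suffices that $\int_{B_{1/2}}f'(u)\le\eta\lambda$. The reason is that in the normalized limit the nonlinearity is $\lambda$-Lipschitz on the range of the limit, which is therefore $C^1$ with universal bounds.

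The $\limsup$ hypothesis gives smallness at \emph{every} $r<r_0$. So you can apply the lemma at all scales (centering balls at $x_0$) to get $\osc_{\theta^jr}(u)\le\theta^{\alpha j}\osc_r(u)$. Then, for $M$ large, the envelope $Q^M_s(u)=\avg_s(u)+M\osc_s(u)$ satisfies $Q^M_{\theta^{j+1}r}(u)\le Q^M_{\theta^jr}(u)$. Hence $u(x_0)=\lim_j\avg_{\theta^jr}(u)\le Q^M_r(u)<+\infty=T_f$, i.e.\ $x_0\in\reg(u)$.

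The paper phrases this last step through \Cref{cor:Qdecay} at a scale $\theta^Nr$ with $(\theta^Nr)^2f'(Q^M_{\theta^Nr}(u))<1$. Such a scale exists precisely because $f'(Q^M_r(u))<+\infty$ when $T_f=+\infty$.

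Your ``first bad scale'' fallback points in this direction. Without a one-step oscillation improvement that can be iterated, however, the proposal contains no argument that actually bounds $u(x_0)$.
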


\begin{remark}\label{rmk:villegas}
In many $\eps$-regularity results one hopes to propagate smallness across scales,
implying that the ``limsup'' in assumption \eqref{eq:limsup} is replaced by a ``liminf''.
Here this is just \emph{not} possible: a modification of Villegas' counterexamples shows that in fact $f'(u)$ may oscillate strongly near a singularity, both in the pointwise and in the integral sense. See \Cref{rmk:noliminf} and Appendix~\ref{app:Villegas}.
\end{remark}

Combining \Cref{thm:epsreg} with the universal upper bound obtained by testing stability with a cutoff
(the analogue of \Cref{rmk:L1bound} in the local setting), we obtain:

\begin{corollary}\label{cor:introshape}
Let $(u,f)\in\sol(B_1)$, $T_f=+\infty$, and $x_0\in B_{1/2}\cap\sing(u)$.
Then
\[
\eps(n)\le \limsup_{r\downarrow 0} r^{2-n}\int_{B_r(x_0)} f'(u)\,dx \le C(n).
\]
\end{corollary}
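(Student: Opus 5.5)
The plan is to prove the two inequalities separately: the upper bound follows from stability alone, and the lower bound is the contrapositive of \Cref{thm:epsreg}.

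\emph{Upper bound.} Fix $x_0\in B_{1/2}$ and $r\in(0,1/4)$, so that $B_{2r}(x_0)\subset B_1$. Take $\xi\in C^1_c(B_{2r}(x_0))$ with $0\le\xi\le1$, $\xi\equiv1$ on $B_r(x_0)$ and $|\nabla\xi|\le 2/r$. Since $f'(u)\ge0$, the stability inequality in \eqref{eq:stability} gives
\[
\int_{B_r(x_0)} f'(u)\,dx\le\int_{B_1} f'(u)\,\xi^2\,dx\le\int_{B_1}|\nabla\xi|^2\,dx\le \frac{4}{r^2}\,|B_{2r}|=C(n)\,r^{n-2}.
\]
The bound is uniform in $r\in(0,1/4)$, so taking the $\limsup$ as $r\downarrow0$ gives the upper estimate. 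This step uses only $f'(u)\in L^1_\loc$ and \eqref{eq:stability}. In particular it does not use $x_0\in\sing(u)$ or $T_f=+\infty$.

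\emph{Lower bound.} Let $x_0\in B_{1/2}\cap\sing(u)$, and suppose for contradiction that
\[
\limsup_{r\downarrow0} r^{2-n}\int_{B_r(x_0)}f'(u)\,dx<\eps(n),
\]
where $\eps(n)$ is the constant from \Cref{thm:epsreg}. All hypotheses of \Cref{thm:epsreg} hold: $(u,f)\in\sol(B_1)$, $T_f=+\infty$, and $x_0\in B_{1/2}$. The theorem therefore gives $x_0\in\reg(u)$. This contradicts $x_0\in\sing(u)$, since $\reg(u)$ and $\sing(u)$ partition $B_1$ by definition. Hence the $\limsup$ is at least $\eps(n)$.

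There is no real obstacle once \Cref{thm:epsreg} is available. The only point to check is that the constants agree: the lower constant must be exactly the $\eps(n)$ of \Cref{thm:epsreg}, and the cutoff must be supported inside $B_1$, which $r<1/4$ and $x_0\in B_{1/2}$ guarantee.
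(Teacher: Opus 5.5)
Your proposal is correct and is essentially the paper's argument. The paper obtains the corollary by combining \Cref{thm:epsreg}, used contrapositively at a singular point, with the universal upper bound from testing stability against a cutoff, which is the local analogue of \Cref{rmk:L1bound}.
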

\Cref{cor:introshape} and \Cref{rmk:villegas} give a complete answer to the natural limsup versions of Brezis' question \eqref{eq:Qbrezisinteg}, together with counterexamples to the corresponding liminf strengthening.
\subsection{\texorpdfstring{$\eps$}{}-regularity \`a la Brezis for singular nonlinearities}

When $T_f<+\infty$, the smallness of the scale-invariant quantity $r^{2-n}\int_{B_r(x_0)} f'(u)$ is not, by itself, enough to force $u(x_0)<T_f$.
This is because, in this case, at singular points the function $u$ may remain bounded (and even be $C^1$),
so detecting singular behavior requires finer information.

\begin{theorem}\label{thm:C1alpha}
Given any $\sigma>0$, there exists $\eps=\eps(n,\sigma)>0$ such that the following holds.
Assume $(u,f)\in\sol(B_1)$, $T_f<+\infty$, and $x_0\in B_{1/2}$.
If
\[
\limsup_{r\downarrow 0} r^{2-n}\int_{B_r(x_0)} f'(u)\,dx <\eps,
\]
then either $x_0\in\reg(u)$ or
\begin{equation}\label{eq:alternative}
\limsup_{x\to x_0}\frac{T_f-u(x)}{|x-x_0|^{2-\sigma}}<+\infty.
\end{equation}
\end{theorem}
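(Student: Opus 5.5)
We argue by compactness and contradiction, exploiting the closedness of $\sol$ in \Cref{thm:compactness}. After translating and rescaling, the hypothesis gives a scale $r_0>0$ such that
\[
r^{2-n}\int_{B_r(x_0)}f'(u)\,dx<\eps \qquad\text{for all } r\le r_0.
\]
Assume $x_0\in\sing(u)$, so that $u(x_0)=T_f$. By \Cref{thm:open}, $u$ is continuous up to $x_0$. We may normalize $T_f=0$ by replacing $f$ with $f(\cdot+T_f)$ and $u$ with $u-T_f$; thus $u\le 0$ and $u(x_0)=0$. For $r\le r_0$ set
\[
u_r(x):=\frac{u(x_0+rx)}{\theta(r)},\qquad \theta(r):=\Big(\fint_{B_1}|u(x_0+r\cdot)|^2\Big)^{1/2},
\]
which gives $(u_r,f_r)\in\sol(B_1)$ with $f_r(t)=r^2\theta(r)^{-1}f(\theta(r)t)$ and blow-up level still $0$. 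The stability potential rescales as $f_r'(u_r)(x)=r^2f'(u(x_0+rx))$, so the smallness condition is scale invariant:
\[
\int_{B_1}f_r'(u_r)<\eps \quad\text{for every } r\le r_0.
\]
The goal \eqref{eq:alternative} amounts to the decay $\theta(r)\lesssim r^{2-\sigma}$ along a dyadic sequence, together with an $L^\infty$ bound from the $L^2$ normalization. The latter follows from the superharmonicity and the upper bound $u\le 0$; for instance, the weak Harnack inequality applied to $-u\ge0$ gives it on $B_{1/2}$.

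The core is a one-step improvement lemma. For every $\sigma>0$ there exist $\eps(n,\sigma)$ and $\rho=\rho(n,\sigma)\in(0,1/2)$ with the following property. Suppose $(v,g)\in\sol(B_1)$, $T_g=0$, $v\le 0$, $v(0)=0$, $\int_{B_1}g'(v)<\eps$, and $\|v\|_{L^2(B_1)}=1$. Then
\[
\|v\|_{L^2(B_\rho)}\le \rho^{2-\sigma/2}|B_\rho|^{1/2}.
\]
We prove this by contradiction. Take a sequence $(v_k,g_k)$ with $\eps_k\to0$. By the Caccioppoli and superharmonic bounds it is bounded in $W^{1,2}_{\loc}$, and by \Cref{thm:compactness} it converges to some $(v,g)\in\sol(B_1)$ strongly in $W^{1,2}_\loc$. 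Since $\int g_k'(v_k)\to0$, Fatou's lemma (or the a.e. convergence \eqref{eq:specification}) forces $g'(v)=0$ a.e., so $g$ is constant on the range of $v$ off a null set. Hence $-\Delta v=c\ge0$, with $v\le 0$. We must also rule out $v$ being constant and ensure $v(0)=0$ persists in the limit. Lower semicontinuity of $v$ only gives $v(0)\le\liminf v_k(0)=0$, which is the wrong direction, so we need upper semicontinuity of the representatives at $0$. For this we use the continuity from \Cref{thm:open} together with a uniform modulus coming from the smallness of $g_k'$, for example via a uniform $\eps$-regularity/Hölder estimate away from the level $T$.

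Granted this, $v$ is a smooth function with $-\Delta v=c\ge0$, $v\le0$, and $v(0)=0$, so $0$ is an interior maximum. If $c=0$, the strong maximum principle gives $v\equiv0$, contradicting the normalization. If $c>0$, then $v$ is a nonpositive quadratic-type function vanishing to second order at $0$: its gradient vanishes at the maximum, so $|v(x)|\le C|x|^2$ with $C$ universal from the $L^2$ bound. This contradicts the assumed failure of the decay for $\rho$ small depending on $\sigma$.

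We then iterate the lemma on dyadic scales $\rho^k r_0$. This yields $\theta(\rho^kr_0)\le C\rho^{k(2-\sigma/2)}$, and interpolating between dyadic scales with the $L^\infty$ bound gives $|u(x)|\le C|x-x_0|^{2-\sigma}$, which is \eqref{eq:alternative}. The main obstacle we expect is the passage to the limit that preserves the contact $v(0)=0$ and the upper bound $v\le T$ while the nonlinearities $g_k$ change. In particular, the limit could become regular with $T_g=+\infty$, or $g$ could develop a jump to $+\infty$ at level $0$. Here the smallness $\int g_k'(v_k)\to0$ should exclude the second scenario, and the nondegenerate $L^2$ normalization should exclude collapse of $v$ to a constant.
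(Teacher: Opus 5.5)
Your architecture (a compactness-based one-step improvement for $T_f-u$ at a singular point, then a dyadic iteration) is the same as the paper's \Cref{lem:decayC1}. However, the step you yourself flag as the main obstacle is a genuine gap, and it cannot be closed with the lemma as you state it. You assume smallness only at scale one, $\int_{B_1}g'(v)<\eps$. Via \Cref{rmk:markov} and \Cref{lem:decay}, this gives oscillation decay of $v_k$ only on scales $r$ with $r^{2-n}\eps_k\lesssim\eta$, i.e.\ (for $n\ge3$) only down to $r\approx\eps_k^{1/(n-2)}$. Below that scale, nothing in your argument prevents $v_k$ from staying a definite amount below $0$ on $B_1\setminus B_{r_k}$ and climbing to $v_k(0)=0$ only inside $B_{r_k}$, with $r_k\to0$. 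In that case the $L^1_{\loc}$ limit has $v(0)<0$ and no contradiction arises. Your suggested remedy does not fill this gap. The continuity given by \Cref{thm:open} is not uniform in $k$. A H\"older estimate ``away from the level $T$'' does not help either, since what is needed is a modulus of continuity exactly at the contact point.

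The missing idea is to put \emph{all-scale} smallness into the lemma: assume $r^{2-n}\int_{B_r}g'(v)<\eps$ for every $r\in(0,1)$. This is precisely what every rescaling $u(x_0+R\,\cdot)$, $R<r_0$, inherits from the hypothesis of the theorem. Then \Cref{lem:decay} (with $\lambda=1$) applies at every scale and gives $\osc_{\theta s}(v_k)\le\theta^\alpha\osc_s(v_k)$ for all $s\in(0,1)$, uniformly in $k$. Summing the differences of consecutive averages and using $v_k(0)=\lim_{r\downarrow0}\avg_r(v_k)=0$ yields $\fint_{B_r}v_k\ge -Cr^\alpha$ with $C$ independent of $k$. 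This passes to the $L^1_{\loc}$ limit and, together with $v\le0$, forces $v(0)=0$. With that in place the rest of your argument goes through, with either your $L^2$ normalization or the paper's $\sup$ normalization.

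Two smaller corrections. First, $g'(v)=0$ a.e.\ does not follow from Fatou's lemma, because $g_k'(v_k)$ need not converge to $g'(v)$; use instead the difference-quotient argument of Step 3 in the proof of \Cref{lem:decay}, which only needs the limit to be nonconstant. Second, the $L^\infty$ bound on $-v_k$ comes from the mean value inequality for nonnegative subharmonic functions. It does not come from the weak Harnack inequality, which controls infima of supersolutions.
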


Alternative \eqref{eq:alternative} means that the graph of $u$ in $B_1\times \R$ touches the hyperplane $\{x_{n+1}=T_f\}$ in a $C^{2-\sigma}$ fashion at $x_0$.
When $f$ blows up at least like some inverse power (MEMES-type behavior), this is incompatible with stability,
and we recover an analogue of \Cref{thm:epsreg}:

\begin{corollary}\label{cor:assmems}
Let $T_f<+\infty$ and assume
\begin{equation}\label{eq:assMEMS}
\limsup_{t\uparrow T_f }\, (T_f-t)^{p}f(t)>0
\quad\text{ for some }p>0.
\end{equation}
There exists $\eps=\eps(n,p)>0$ such that, if $(u,f)\in\sol(B_1)$ and $x_0\in B_{1/2}$ satisfy
\[
\limsup_{r\downarrow 0} r^{2-n}\int_{B_r(x_0)} f'(u)\,dx <\eps,
\]
then $x_0\in\reg(u)$.
\end{corollary}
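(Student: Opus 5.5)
The plan is to reduce to \Cref{thm:C1alpha} and show that, under \eqref{eq:assMEMS}, the alternative \eqref{eq:alternative} contradicts stability. Fix $\sigma>0$ small, depending only on $p$ (say $\sigma<\min\{1,p\}/2$ or smaller, chosen below), and take $\eps(n,p):=\eps(n,\sigma)$ from \Cref{thm:C1alpha}. Suppose $x_0\notin\reg(u)$. Then $u(x_0)=T_f$ and there are $C_0,r_0>0$ with
\[
T_f-u(x)\le C_0|x-x_0|^{2-\sigma}\qquad\text{for }|x-x_0|<r_0 .
\]
It remains to show this contact is incompatible with $f'(u)\in L^1_\loc$ together with the stability bound $\int_{B_r}f'(u)\le C(n)r^{n-2}$.

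The first step turns \eqref{eq:assMEMS} into a lower bound for $f'$. By convexity, $f'(t)\ge \frac{f(t)-f(s)}{t-s}$ for $s<t$. Choose $t_k\uparrow T_f$ with $f(t_k)\ge c(T_f-t_k)^{-p}$. Taking $s_k$ with $T_f-s_k=2(T_f-t_k)$ gives, for all $t\in[t_k,T_f)$,
\[
f'(t)\ge f'(t_k)\ge \frac{f(t_k)-f(s_k)}{T_f-t_k}.
\]
If $f(s_k)\le f(t_k)/2$, this yields $f'(t)\gtrsim (T_f-t_k)^{-1-p}$ on $[t_k,T_f)$. If instead $f(s_k)>f(t_k)/2$, then $s_k$ is also a good level with a slightly worse constant, and we iterate along dyadic levels. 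This iteration must terminate, since $f(T_f-2^j\delta)$ is bounded as $j$ grows. Hence there are levels $\tau_k\uparrow T_f$, with $\delta_k:=T_f-\tau_k\to0$, such that
\[
f'\ge c\,\delta_k^{-1-p}\quad\text{on }[\tau_k,T_f).
\]
Making the constant $c$ uniform, depending only on $p$, needs a careful pigeonhole over dyadic scales. I expect this to be the main technical point. A cleaner route may be to use $\limsup (T_f-t)^{p}f(t)>0$ directly, together with an integration $f(t)\le f(\tau)+\int_\tau^t f'$, to find the required levels.

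The second step uses these levels. Set $\rho_k:=(\delta_k/C_0)^{1/(2-\sigma)}$. The contact estimate gives $u\ge T_f-\delta_k=\tau_k$ on $B_{\rho_k}(x_0)$, so
\[
\rho_k^{2-n}\int_{B_{\rho_k}(x_0)}f'(u)\ge c\,\rho_k^{2}\,\delta_k^{-1-p}\approx c\,\delta_k^{\frac{2}{2-\sigma}-1-p}.
\]
The exponent $\frac{2}{2-\sigma}-1-p=\frac{\sigma}{2-\sigma}-p$ is negative once $\sigma<p$. So the left-hand side tends to $+\infty$ as $k\to\infty$. This contradicts both the hypothesis (a limsup smaller than $\eps$) and the universal upper bound from the stability cutoff test. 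Therefore $x_0\in\reg(u)$.

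In fact the integrand's divergence makes the contradiction robust: even a weaker lower bound $f'\gtrsim \delta^{-1}$ would not suffice, but any $f'\gtrsim\delta^{-1-p}$ with $p>\sigma/(2-\sigma)$ does. This is why we need $\sigma$ small in terms of $p$, which explains the dependence $\eps=\eps(n,p)$.
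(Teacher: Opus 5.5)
Your overall architecture is the paper's, and your second step and exponent bookkeeping match it. You reduce to \Cref{thm:C1alpha} with $\sigma=\sigma(p)$, convert \eqref{eq:assMEMS} into $f'\ge c\,\delta_k^{-1-p}$ on $[T_f-\delta_k,T_f)$ along $\delta_k\downarrow0$, and play the $C^{2-\sigma}$ contact against the cutoff bound $r^{2-n}\int_{B_r}f'(u)\le C(n)$. The paper's condition $\sigma<2p/(1+p)$ is exactly your $\sigma/(2-\sigma)<p$. Your choice $\sigma<\min\{1,p\}/2$ guarantees it, although ``once $\sigma<p$'' alone would not.

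The gap is in your first step. With threshold $1/2$, the claim that the dyadic iteration ``must terminate'' is false when $p<1$. For $f(t)=(T_f-t)^{-p}$ one has $f(s_k)=2^{-p}f(t_k)>f(t_k)/2$ at every step. So the iteration runs down to scales of order one and never produces a good level near $T_f$. Boundedness of $f(T_f-2^j\delta)$ gives no contradiction, because the lower bound you carry along is $f(T_f-2^j\delta)>2^{-j}f(t_k)\ge c\,2^{-j}\delta^{-p}$. At the scale $2^j\delta\approx1$ this is only $c\,\delta^{1-p}$, which tends to $0$. The iteration can be repaired by using a threshold $\theta\in(2^{-p},1)$ instead of $1/2$. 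Each non-terminating step then multiplies the constant in $f(s)\ge c(T_f-s)^{-p}$ by $\theta2^p>1$. Boundedness of $f$ at a fixed level below $T_f$ then forces termination for $k$ large, and at termination $f'\ge(1-\theta)c\,\delta^{-1-p}$.

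The simplest fix is the ``cleaner route'' you mention but do not carry out, which is exactly the paper's argument. Suppose $\limsup_{t\uparrow T_f}(T_f-t)^{1+p}f'(t)=0$. Then for every $\eta>0$ there is $\delta>0$ with $f'(s)\le\eta(T_f-s)^{-1-p}$ on $(T_f-\delta,T_f)$. Integrating gives $f(t)\le f(T_f-\delta)+\frac{\eta}{p}(T_f-t)^{-p}$, so $\limsup(T_f-t)^pf(t)\le\eta/p$, contradicting \eqref{eq:assMEMS}. Hence there are $t_j\uparrow T_f$ with $f'(t_j)\ge c(T_f-t_j)^{-1-p}$, and monotonicity of $f'$ extends this to $[t_j,T_f)$. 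Your concern about making $c$ uniform in $p$ alone is unfounded. Only $\eps$ must depend on $(n,\sigma(p))$. The constants $c$ and $C_0$ may depend on the fixed pair $(u,f)$, since the contradiction comes from the divergence of $\delta_k^{\sigma/(2-\sigma)-p}$ for that pair.
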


The following explicit example shows that assumption \eqref{eq:assMEMS} is necessary.

\begin{example}\label{ex:r2logr}
For $n\ge 3$ and $r_0>0$ small, consider the decreasing radial function
\[
u(r):= 1-\log(1/r)\, r^2,\qquad r\in(0,r_0].
\]
A direct computation gives
\[
\Delta u(r)=u''(r)+\frac{n-1}{r}u'(r)=n+2-2n\log(1/r).
\]
Since $u\colon(0,r_0]\to (u(r_0),1]$ is a $C^1$ diffeomorphism for $r_0$ small,
we can define implicitly a nonlinearity $f:(u(r_0),1]\to[0,+\infty)$ via the relation $f(u)=-\Delta u$, that in this case takes the form
\begin{equation}\label{eq:54}
f\bigl(1-r^2\log(1/r)\bigr)=2n\log(1/r)-n-2.
\end{equation}
One checks that $f$ is increasing, convex, and blows up at $1$.
Moreover, differentiating \eqref{eq:54} yields
\[
f'(u(r))=\frac{2n}{(2\log(1/r)-1)\,r^2},
\qquad\text{hence}\qquad
r^2 f'(u(r))=\frac{2n}{2\log(1/r)-1}\xrightarrow[r\downarrow 0]{}0.
\]
In particular, 
\[
(1-t)\,f'(t)=n+o(1)\qquad\text{as }t\uparrow 1,
\]
and $f$ blows up logarithmically as $t\uparrow 1$.
Consequently, for $r_0$ small enough the stability inequality holds (since $f'(u(r))=o(r^{-2})$ is dominated by the Hardy weight),
and $0$ is a singular point (since $u(0)=1=T_f$).
However, in this case,
\[
\lim_{r\downarrow 0}r^{2-n}\int_{B_r}f'(u)\,dx =0,
\]
so the conclusion of \Cref{cor:assmems} does not hold.
We can note that indeed \eqref{eq:alternative} holds for every $\sigma>0$ (since $r^\sigma\log(1/r)\to 0$).
\end{example}

It is not a coincidence that this example requires $n\ge 3$: in \Cref{sec:2D} below we show that $C^{1,1}$ regularity holds in two dimensions, meaning that one can take $\sigma=0$. In particular we prove that, without any further assumption, if $f(T_f)=+\infty$ then $\sing(u)=\emptyset$.

\subsection{Higher integrability of \texorpdfstring{${f'(u)}$}{} and the size of the singular set}\label{subsec:brezsize}

Since solutions of $-\Delta u=f(u)$ lack a monotonicity formula suitable for dimension reduction,
our approach to bounding $\sing(u)$ uses the following standard principle:
\begin{center}
{\it Higher integrability of $f'(u)$ forces quantitative smallness of the set of points\\ where the scale-invariant mass of $f'(u)$
stays bounded from below.}
\end{center}
Concretely,
\begin{equation}\label{eq:Lpbound}
f'(u)\in L^q_\loc,\quad q\ge 1
\quad\Longrightarrow\quad
\mathcal H^{n-2q}(\sing(u)) =0.
\end{equation}
This implication follows by combining our $\eps$-regularity statements (namely \Cref{thm:epsreg} and \Cref{cor:assmems})
with the following classical covering lemma (for the reader's convenience, we provide a proof in Appendix~\ref{app:3}).

\begin{lemma}\label{lem:gmtintro}
Let $\varphi \in L^q_\loc(\R^n)$ be nonnegative, $\alpha>0$, $q\ge 1$, and $E\subset B_1$ be closed.
Assume
\[
E\subset \bigg\{x\in B_1 : \limsup_{r\downarrow 0} r^{\alpha-n}\int_{B_r(x)} \varphi\,dx \ge 1 \bigg\}.
\]
Then
\[
\mathcal{H}^{n-\alpha q}(E)=0.
\]
Here, if \(n-\alpha q\le0\), the conclusion is understood as \(E=\emptyset\).
\end{lemma}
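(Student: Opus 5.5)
The plan is a Vitali covering argument combined with absolute continuity of the integral. Set $s:=n-\alpha q$ and assume first $s>0$. Put $\mu:=\varphi^q\,dx$, a locally finite measure absolutely continuous with respect to Lebesgue measure. The aim is to convert the density condition on $\varphi$ into a lower bound on $\mu$ at small scales; this is where H\"older's inequality enters. For $x\in E$ there are arbitrarily small radii $r$ with $\int_{B_r(x)}\varphi\ge \tfrac12 r^{n-\alpha}$. H\"older gives
\[
\int_{B_r(x)}\varphi\le |B_r|^{1-1/q}\Big(\int_{B_r(x)}\varphi^q\Big)^{1/q},
\]
so
\[
\int_{B_r(x)}\varphi^q\ \ge\ c(n,q)\,r^{q(n-\alpha)-n(q-1)} = c\,r^{n-\alpha q}=c\,r^{s}.
\]
For $q=1$ this bound is immediate.

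Next I would localize. Fix $\delta>0$ and an open set $U\supset E$ with $U\subset B_2$, to be chosen below. For each $x\in E$, choose a radius $r_x<\delta$ such that the inequality above holds and $B_{r_x}(x)\subset U$. By the Vitali $5r$-covering lemma, we can extract a disjoint subfamily $\{B_{r_i}(x_i)\}$ with $E\subset\bigcup_i B_{5r_i}(x_i)$. Then
\[
\mathcal H^{s}_{10\delta}(E)\le C\sum_i r_i^{s}\le C\sum_i \mu(B_{r_i}(x_i))\le C\,\mu(U).
\]
It remains to show $\mu(U)$ can be made small. First note that $|E|=0$. Indeed, the computation above gives $\mathcal H^s_\infty(E)\le C\mu(B_2)<\infty$, so $\mathcal H^s(E)<\infty$; since $s<n$, this forces $\mathcal L^n(E)=0$. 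Because $E$ is closed and null, absolute continuity of $\mu$ and outer regularity of Lebesgue measure let us choose $U$ with $\mu(U)<\eta$ for any prescribed $\eta>0$. Letting $\delta\to0$ and then $\eta\to0$ gives $\mathcal H^s(E)=0$. Strictly speaking, $\mathcal H^s(E)<\infty$ requires a $\delta$-uniform bound, which the same covering with $U=B_2$ supplies.

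If $s\le 0$, the same H\"older estimate gives $\int_{B_r(x)}\varphi^q\ge c\,r^{s}\ge c>0$ for a sequence $r\downarrow0$. This contradicts $\int_{B_r(x)}\varphi^q\to0$, which holds by absolute continuity. Hence no such $x$ exists, and $E=\emptyset$. The only delicate point I expect is the justification that $|E|=0$ in the case $s>0$, which the preliminary finiteness bound handles. Closedness of $E$ is not essential here, since outer regularity works for any null set.
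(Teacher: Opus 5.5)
Your proposal is correct and follows essentially the same route as the paper: H\"older converts the density bound into $\int_{B_r(x)}\varphi^q\ge c\,r^{n-\alpha q}$, a Vitali $5r$-covering bounds $\mathcal H^{n-\alpha q}(E)$ by $\int\varphi^q$ over a neighborhood of $E$ (giving $\mathcal H^{n-\alpha q}(E)<\infty$ and hence $|E|=0$), and absolute continuity on shrinking neighborhoods finishes the argument, with the case $n-\alpha q\le 0$ handled by the same H\"older estimate. The only difference is that you shrink arbitrary open neighborhoods using outer regularity of Lebesgue measure, whereas the paper uses the $\delta$-neighborhoods $E_\delta$; this is why your argument does not need $E$ to be closed, while the paper uses closedness to get $\bigcap_\delta E_\delta=E$.
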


Since stability provides an a priori $L^1$ bound on $f'(u)$ in interior balls (simply test \eqref{eq:stability} with a cutoff),
we immediately deduce the following very general result.

\begin{corollary}\label{cor:2Dmems}
Let $(u,f)\in\sol(B_1)$ and assume either
\[
T_f=+\infty
\qquad\text{or}\qquad
T_f<+\infty\ \text{ and \eqref{eq:assMEMS} holds.}
\]
Then
\[
\mathcal{H}^{n-2}(\sing(u))=0.
\]
\end{corollary}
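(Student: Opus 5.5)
The plan is to combine the $\eps$-regularity statements with the covering \Cref{lem:gmtintro}, taking $q=1$ and $\alpha=2$. By a covering argument it suffices to prove $\mathcal H^{n-2}(\sing(u)\cap \overline{B_\rho(y)})=0$ for every ball with $B_{4\rho}(y)\subset B_1$. Rescaling via $u_\rho(x):=u(y+2\rho x)$ and $f_\rho:=(2\rho)^2 f$ keeps us in $\sol$, preserves $T_f$, and preserves the MEMS condition \eqref{eq:assMEMS} with the same $p$. The scale-invariant quantity $r^{2-n}\int_{B_r}f'(u)$ is also unchanged. So we may assume the portion of $\sing(u)$ under study lies in $\overline{B_{1/2}}\subset B_{1/2+\delta}$, and that $(u,f)\in\sol(B_1)$.

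\textbf{Integrability.} Testing the stability inequality \eqref{eq:stability} with a cutoff $\xi\in C^1_c(B_1)$, $\xi\equiv1$ on $B_{3/4}$, gives $\int_{B_{3/4}}f'(u)\le C(n)$. Hence $\varphi:=f'(u)\mathbf 1_{B_{3/4}}\in L^1(\R^n)$ is nonnegative.

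\textbf{Closedness.} Set $E:=\sing(u)\cap\overline{B_{1/2}}$. By \Cref{thm:open}, $\reg(u)$ is open, so $\sing(u)$ is relatively closed in $B_1$. Therefore $E$ is a closed subset of $B_1$, as \Cref{lem:gmtintro} requires. (A slightly smaller closed ball can be used to stay inside $B_{1/2}$ for \Cref{thm:epsreg}; the rescaling above leaves room for this.)

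\textbf{Lower density on $E$.} Let $x_0\in E$. If $T_f=+\infty$, \Cref{thm:epsreg} (equivalently \Cref{cor:introshape}) gives
\[
\limsup_{r\downarrow0}r^{2-n}\int_{B_r(x_0)}f'(u)\ge\eps(n).
\]
If $T_f<+\infty$ and \eqref{eq:assMEMS} holds, \Cref{cor:assmems} gives the same bound with $\eps(n,p)$. For small $r$ we have $B_r(x_0)\subset B_{3/4}$, so the same holds with $\varphi$ in place of $f'(u)$. Now apply \Cref{lem:gmtintro} to $\varphi/\eps$ with $\alpha=2$ and $q=1$. This yields $\mathcal H^{n-2}(E)=0$, and countable subadditivity over a covering of $B_1$ concludes.

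\textbf{Main obstacle.} The substance of the argument is entirely in the cited $\eps$-regularity results. The only delicate points are the closedness of $E$, which is exactly why \Cref{thm:open} is needed since $u$ is a priori only lower semicontinuous, and checking that the rescaling keeps the hypotheses and constants uniform. When $n\le2$, the lemma's convention gives $E=\emptyset$, consistent with the statement.
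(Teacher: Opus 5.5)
Your proposal is correct and follows the paper's own argument. The paper likewise combines the contrapositive of \Cref{thm:epsreg} and \Cref{cor:assmems}, which gives the density lower bound at every singular point, with the stability cutoff bound $f'(u)\in L^1_{\loc}$, and then applies \Cref{lem:gmtintro} with $\alpha=2$, $q=1$; your explicit use of \Cref{thm:open} for the closedness of $E$ and the localization by rescaling simply fill in details the paper leaves implicit.
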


\begin{remark}
The dimensional bound $n-2$ is optimal as a uniform bound over the class $\sol$, as the example $f(u)=(1-u)^{-p}$ for $p>0$ small shows, see Example~\ref{rem:model}(iii) below. 
\end{remark}

\medskip

If we fix a certain $f\in\NN$, we can improve \Cref{cor:2Dmems} by proving higher integrability estimates
$f'(u)\in L^q$ for some $q>1$ depending on $f$.
For this, we need to impose two assumptions:

\begin{itemize}
\item[$\bullet$] There exists $\gamma_\circ>0$ such that
\begin{equation}\label{eq:loggrowth}
\liminf_{t\to +\infty} \frac{f'(t)}{(\log t)^{2+\gamma_\circ}}>0.
\end{equation}
(For $T_f<+\infty$ this condition is automatically satisfied.)

\item[$\bullet$] There exist $m_f>0$ and a continuous increasing function $\omega\colon[0,+\infty)\to[0,+\infty)$, with $\omega(0)=0$,
such that $f(m_f)>0$, $f'(m_f)>0$, and
\begin{equation}\label{eq:weakCR}
\frac{f'(s)}{f'(t)} \le \omega\bigg(\frac{f(s)}{f(t)} \bigg)
\qquad \text{for all } m_f \leq s<t<T_f.
\end{equation}
\end{itemize}

\begin{remark}\label{rmk:ass-vs-CR}
Assumption \eqref{eq:loggrowth} is a very mild growth hypothesis, satisfied in all model cases of interest.

Concerning \eqref{eq:weakCR}, since $f'(s)\le f'(t)$ for $s<t$,
this condition is only relevant in the regime where $f(s)/f(t)\ll 1$.
Based on this observation, we can show that \eqref{eq:weakCR} is robust under perturbations that preserve the growth of $f$. More precisely:
\begin{center}
{\it If $f$ satisfies \eqref{eq:weakCR} and $g\sim f$, then also $g$ satisfies \eqref{eq:weakCR} (possibly with a different $\omega$).}
\end{center}
The notation $g\sim f$ by definition means that for some constant $C=C(f,g)$ it holds
\begin{equation}\label{eq:fg-comparable}
\frac{1}{C}\,f(t)-C \leq g(t) \leq C\,f(t)+C\qquad \forall\,t \in \R,
\end{equation}
so in particular $T_g=T_f$. In order to check the claim we  consider two cases:\\
(i) If $\lim_{t\uparrow T_f}f(t)<+\infty$, then $f(s)/f(t)\sim 1$ for all $m_f \leq s<t<T_f$,
so \eqref{eq:weakCR} holds trivially.\\
(ii)
If instead $\lim_{t\uparrow T_f}f(t)=+\infty$, then \eqref{eq:fg-comparable} implies that
\[
\frac{f(s)}{f(t)} \leq C'\,\frac{g(s)}{g(t)}
\qquad\text{whenever }\frac{g(s)}{g(t)} \ll 1,
\]
and hence also $g$ satisfies a condition of the form \eqref{eq:weakCR} (with a different $\omega$).
\end{remark}

\begin{remark}
Assumption \eqref{eq:weakCR} is implied by the Crandall--Rabinowitz-type condition
\begin{equation}\label{eq:CRintro}
\liminf_{t\uparrow T_f}\frac{f''(t) f(t)}{f'(t)^2} >0,
\end{equation}
whenever $f\in C^2$.
Indeed, if \eqref{eq:CRintro} holds, then there exist $\delta,m_\delta>0$ such that
$f''(t) f(t)\ge \delta f'(t)^2$ for all $t\ge m_\delta$. This implies that $f'(t)\,f(t)^{-\delta}$ is increasing on $[m_\delta,T_f)$, therefore \eqref{eq:weakCR} holds with $\omega(s)=s^\delta$.

Because of this observation, one can immediately check that \eqref{eq:weakCR}
is satisfied by all classical examples of nonlinearities.
\end{remark}

\medskip

We now define the exponent $q_f\ge 1$ controlling the integrability of $f'(u)$ in the smooth asymptotic regime.
\begin{equation}\label{eq:q_f}
q_f:=1+ 2\liminf_{t\uparrow T_f}\frac{\log f(t) + \int^t_{m_f} \sqrt{{f''(s)}/{f(s)}}\,ds}{\log f'(t)}.
\end{equation}
Since $f'(t)\to+\infty$ as $t\uparrow T_f$, the precise choice of $m_f$ does not affect $q_f$.
Although this quantity may look unusual at first glance, Section~\ref{sect:compare} shows that our formula is sharp in all model cases, providing strong evidence that it is optimal in general.

\begin{theorem}\label{thm:dimbound}
Let $f\in \NN$ satisfy \eqref{eq:loggrowth} and \eqref{eq:weakCR}. Assume in addition that $f\in C^2((m_f,T_f))$ and that $f(t)\to+\infty$ as $t\uparrow T_f$.
Then for all $(u,f)\in\sol(\Omega)$ and all $q<q_f$,
\[
f'(u)\in L^{q}_\loc(\Omega).
\]
In particular,
\[
\dim_{\mathcal{H}}\sing(u)\le n-2q_f,
\]
provided
\[
T_f=+\infty
\qquad\text{or}\qquad
T_f<+\infty\ \text{and \eqref{eq:assMEMS} holds.}
\]
\end{theorem}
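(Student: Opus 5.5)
The plan has two parts: first prove the higher integrability $f'(u)\in L^q_\loc$ for $q<q_f$, then deduce the dimension bound from the $\eps$-regularity results. The second part is short. Given $f'(u)\in L^q_\loc$, \Cref{thm:epsreg} (if $T_f=+\infty$) or \Cref{cor:assmems} (if $T_f<+\infty$ and \eqref{eq:assMEMS} holds) shows that every $x\in\sing(u)$ satisfies
\[
\limsup_{r\downarrow 0} r^{2-n}\int_{B_r(x)}f'(u)\ge\eps.
\]
By \Cref{thm:open}, $\sing(u)$ is relatively closed, so we apply \Cref{lem:gmtintro} with $\varphi=f'(u)/\eps$, $\alpha=2$, on compact pieces after translation and scaling. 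This gives $\mathcal H^{n-2q}(\sing(u))=0$ for every $q<q_f$, hence $\dim_{\mathcal H}\sing(u)\le n-2q_f$.

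For the integrability, I would follow the Crandall--Rabinowitz / Farina approach. Introduce
\[
h(t):=\int_{m_f}^t\sqrt{f''(s)/f(s)}\,ds,
\qquad
g(t)=f(t)e^{\beta h(t)} \ \text{(roughly)}.
\]
Test the stability inequality with $\xi=\psi(u)\eta$, where $\eta$ is a cutoff and $\psi$ is chosen by an ODE-type ansatz, and test the equation with $\Psi(u)\eta^2$ where $\Psi'=\psi'^2$. Subtracting the two yields
\[
\int \bigl(f'(u)\psi(u)^2-f(u)\Psi(u)\bigr)\eta^2\le C\int(\psi(u)^2+|\Psi(u)|)|\nabla\eta|^2 .
\]
The natural choice is $\psi=f^{\gamma}$-like, modulated by $e^{\theta h}$. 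The square root $\sqrt{f''/f}$ enters precisely when optimizing the coefficient: one wants $f'\psi^2-f\Psi\gtrsim f'\psi^2$, and the pointwise inequality $\psi'^2\le(1-\delta)f'\psi^2/f$ after differentiating suggests $\psi'/\psi\approx\sqrt{f'/f}\cdot$const. Comparing with $\sqrt{f''/f}$ requires one more layer: write $\psi=\phi(f(u))$ or $\psi=\exp(\theta h)f^{\gamma}$, and use convexity ($f'^2\lesssim ff''$ in averaged form) together with \eqref{eq:weakCR} to control the cross terms. This produces a bound
\[
\int_{B_{1/2}}f'(u)\,f(u)^{2\gamma}e^{2\theta h(u)}\le C,
\]
with the right-hand side controlled through the lower-order terms, which are absorbed using the universal bounds $f'(u),f(u)\in L^1_\loc$.

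Converting this weighted bound to $L^q$ uses the definition of $q_f$. For $t$ near $T_f$,
\[
f(t)e^{h(t)}\ge f'(t)^{(q-1)/2}
\]
for any $q<q_f$, so $f'(u)^q\le f'(u)\bigl(fe^{h}\bigr)^2$ on $\{u\ge t_0\}$, while $\{u<t_0\}$ is bounded trivially. This is where the factor $2$ and the $1+$ in \eqref{eq:q_f} come from, and it forces the exponents to be $\gamma,\theta\uparrow1$.

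The main obstacle is making the test-function computation work with exponents arbitrarily close to the critical ones while only \eqref{eq:weakCR} and \eqref{eq:loggrowth} are assumed, rather than a pointwise CR condition. The error terms $f\Psi$ and $\psi^2|\nabla\eta|^2$ must be dominated by the main term $f'\psi^2$, which needs $f'(u)$ to be large wherever $f(u)$ is comparably large. I would first try to use \eqref{eq:weakCR} to show that on the region where the weight is large, $f'(u)$ dominates $\Psi/\psi^2$ times $f$ up to a factor $(1-\delta)$. Then I would use \eqref{eq:loggrowth} so that the $e^{\theta h}$ correction, which can be as large as $\exp(c\sqrt{\log})$-type growth, does not spoil integrability of the cutoff terms; this is where the $(\log t)^{2+\gamma_\circ}$ threshold should enter. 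If the direct approach fails, I would fall back on a truncation argument: work with $u\wedge t_k$, obtain bounds uniform in $k$, and pass to the limit by monotone convergence.
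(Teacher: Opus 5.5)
Your overall architecture matches the paper's: double testing as in \Cref{lem:handg}, a test function $\psi\approx f\,e^{\mu h}$, conversion to $L^q$ via the definition of $q_f$, and $\eps$-regularity plus \Cref{lem:gmtintro} for the dimension bound (that last part is fine). However, the three analytic steps that carry the proof are missing or misdescribed.

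\textbf{Coercivity.} The right ansatz is $\psi'/\psi=(1+\mu\sqrt\tau)\,f'/f$ with $\tau=f''f/f'^2$, i.e.\ $\psi=f\,e^{\mu h}$ up to a constant (the paper's $H_{1+\mu\sqrt\tau}$). Your heuristic $\psi'/\psi\approx c\sqrt{f'/f}$ is not the right one. The power of $f$ must also be exactly $1$: with $f^\gamma$ and $\gamma<1$, the derivative below picks up a term $-(1-\gamma)^2\frac{f'^2}{f^2}\psi^2$, and nothing compensates it where $\tau$ is small, which \eqref{eq:weakCR} allows.

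With this ansatz one gets $\frac{d}{dt}\big(\frac{f'}{f}\psi^2-\Psi\big)=(1-\mu^2)\frac{f''}{f}\psi^2$. This only yields $f'\psi^2-f\Psi\ge 0$. To get $f'\psi^2-f\Psi\ge c\,f'\psi^2-Cf$ you must show $\int_\kappa^t f''f\,e^{2\mu h}\,ds\ge c\,f'(t)f(t)e^{2\mu h(t)}-C$. The paper does this by integrating by parts against the nonincreasing weight $H_{2\mu\sqrt\tau-1-\tau}$. That reduces everything to the integrated Crandall--Rabinowitz bound $\int_\kappa^t f''f'f^2\,ds\ge c\,f'(t)^2f(t)^2-C$. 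This bound is proved from \eqref{eq:weakCR} by splitting $\int_\kappa^t (f')^2(f^2)'$ at the point $s$ where $f(s)=\eps_\circ f(t)$, so that $f'(s)\le\frac12 f'(t)$.

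Your proposal has no substitute for this lemma. ``Use convexity ($f'^2\lesssim ff''$ in averaged form)'' names the target, not a mechanism, and the pointwise condition is not assumed.

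\textbf{The cutoff term.} After integrating by parts, the right-hand side involves the primitive $G$ of $\Psi$, and one needs $0\le 2G\le\psi^2$, cf.\ \eqref{eq:Gbound}. The resulting term $\int\psi(u)^2(|\nabla\eta|^2+|\eta\Delta\eta|)$ is not lower order. It is $f(u)^2e^{2\mu h(u)}$, only one factor $f'(u)$ short of the main term. So it cannot be controlled by the $L^1$ bounds on $f(u)$ and $f'(u)$, as your third paragraph claims; those only handle the $-C\int f(u)\eta^2$ term. It must be reabsorbed, as your last paragraph half-recognizes.

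The paper does this with the cutoff of \Cref{lem:logp}, which trades the term for $\eps\int w|\log w|^{2+\gamma_\circ}\zeta^2+C$ with $w=\psi(u)^2$. It then bounds $|\log \psi(u)^2|^{2+\gamma_\circ}\le Cf'(u)$ using $\psi^2\lesssim f^3f'$, $\log f\le C+\log f'+\log(1+t)$, \eqref{eq:loggrowth}, and \eqref{eq:loggrowth1-3}. This is the actual role of \eqref{eq:loggrowth}.

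\textbf{Singular solutions.} All of this (admissibility of $\psi(u)\eta$, absorbing a quantity not yet known to be finite) is legitimate only when $\sing(u)=\emptyset$. For singular $u$ the paper approximates from below by regular stable solutions of $-\Delta u_\eps=(1-\eps)f(u_\eps)$. These come from monotone iteration under the concave supersolution $\Phi_\eps(u)$ (\Cref{prop:approximation}), which keeps $q_f$ and all constants of \Cref{prop:aprioribound} uniform; the paper then concludes with Fatou.

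Your fallback ``$u\wedge t_k$'' does not replace this. The function $u\wedge t_k$ solves no equation. Using $\psi(u\wedge t_k)$ as test function destroys coercivity above $t_k$ whenever $f'/f$ decreases, e.g.\ $f=(1+t)^p$. It leaves a negative term $-\Psi(t_k)\int_{\{u\gg t_k\}}f(u)\eta^2$ of the same order as the quantity you are trying to bound.
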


Combining \Cref{thm:dimbound} with Lemma~\ref{lem:logratio}, we get a general codimension bound.

\begin{corollary}\label{cor:dimbound}
Let \(\Omega\subset\R^n\) be open and let \((u,f)\in\sol(\Omega)\), where $f\in \NN$ satisfies the hypotheses of \Cref{thm:dimbound}. Assume in addition that
\[
\lim_{t\uparrow T_f}\frac{\log f(t)}{\log f'(t)}
\quad \text{exists.}
\]
\begin{enumerate}
\item If $T_f=+\infty$, then $\dim_{\mathcal{H}}\sing(u)\le n-6$.
\item If $T_f<+\infty$ and both $\int_0^{T_f}f(s)\,ds=+\infty$ and  \eqref{eq:assMEMS} hold, then $\dim_{\mathcal{H}}\sing(u)\le n-4$.
\end{enumerate}
Here, if the right-hand side is negative, the conclusion means \(\sing(u)=\emptyset\).
\end{corollary}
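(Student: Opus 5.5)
The plan is to feed a lower bound on $q_f$ into \Cref{thm:dimbound}, which gives $\dim_{\mathcal H}\sing(u)\le n-2q_f$ under exactly our hypotheses. Both assertions then reduce to showing $q_f\ge 3$ when $T_f=+\infty$, and $q_f\ge 2$ when $T_f<+\infty$ and $\int_0^{T_f}f=+\infty$.

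\textbf{Step 1: drop the integral term.} In \eqref{eq:q_f} the integrand $\sqrt{f''/f}$ is nonnegative on $(m_f,T_f)$, since $f''\ge0$ and $f>0$ there. Also $\log f'(t)\to+\infty$ as $t\uparrow T_f$, so the denominator is eventually positive. Hence
\[
q_f\ \ge\ 1+2\liminf_{t\uparrow T_f}\frac{\log f(t)}{\log f'(t)} \;=\; 1+2L,
\]
where $L:=\lim_{t\uparrow T_f}\log f(t)/\log f'(t)$ exists by assumption.

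\textbf{Step 2: bound $L$ via Lemma~\ref{lem:logratio}.} I expect Lemma~\ref{lem:logratio} to give $L\ge1$ if $T_f=+\infty$, and $L\ge \tfrac12$ if $T_f<+\infty$ and $\int^{T_f}f=+\infty$. If I had to reprove it, the arguments would be as follows.

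For $T_f=+\infty$, argue by contradiction. Suppose $f'\ge f^{1/\theta}$ for large $t$, with $\theta<1$. Then $(f^{1-1/\theta})'\le -(1/\theta-1)<0$ at a uniform rate. This forces $f^{1-1/\theta}$ to hit zero in finite time, so $f$ blows up at a finite level, contradicting $T_f=+\infty$.

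For $T_f<+\infty$, suppose $f'\ge f^{1/\theta}$ with $\theta<\tfrac12$. Integrating $f'f^{-1/\theta}\ge1$ from $t$ to $T_f$ gives $f(t)\lesssim (T_f-t)^{-\theta/(1-\theta)}$. Since $\theta/(1-\theta)<1$, this makes $f$ integrable up to $T_f$, contradicting $\int_0^{T_f}f=+\infty$.

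The existence of the limit is what allows the passage from ``$L<\theta$'' to the pointwise inequality $f'\ge f^{1/\theta}$ for all $t$ near $T_f$. This is the only delicate point: with a mere liminf one would only control a sequence of levels, and the ODE comparison would break down.

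\textbf{Step 3: conclude.} Step~2 gives $q_f\ge3$ in case (1) and $q_f\ge2$ in case (2). In case (2), \eqref{eq:assMEMS} is assumed, so \Cref{thm:dimbound} applies in both cases and yields $\dim_{\mathcal H}\sing(u)\le n-6$ and $n-4$ respectively. When the right-hand side is negative, the same reasoning via \eqref{eq:Lpbound} and Lemma~\ref{lem:gmtintro} gives $\sing(u)=\emptyset$. This uses $f'(u)\in L^q_\loc$ for some $q<q_f$ with $2q>n$.

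The main obstacle is Step~2, and specifically making the ODE comparison rigorous from the existence of the limit. Steps~1 and~3 are bookkeeping.
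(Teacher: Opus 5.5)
Your proposal is correct and follows the paper's route. You drop the nonnegative integral term in $q_f$, use the existence of the limit to identify the $\liminf$ with the $\limsup$ controlled by Lemma~\ref{lem:logratio} (whose ODE-comparison proof your sketch reproduces), and feed $q_f\ge3$ (resp.\ $q_f\ge2$) into \Cref{thm:dimbound}.
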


Finally, under a Crandall--Rabinowitz-type regime, one can obtain an explicit lower bound on $q_f$.

\begin{corollary}\label{cor:CRvsGamma}
Let $f\in \NN$ satisfy the hypotheses of \Cref{thm:dimbound}, and assume in addition that $f\in C^2((-\infty,T_f))$. Define $\gamma_\pm \in [0,+\infty]$ as
\[
\gamma_-:= \liminf_{t\uparrow T_f}\frac{f''(t) f(t)}{f'(t)^2},
\qquad
\gamma_+:= \limsup_{t\uparrow T_f}\frac{f''(t) f(t)}{f'(t)^2}.
\]
If $0<\gamma_+<+\infty$, then
\begin{equation}\label{eq:correctq}
q_f \ge 1+ \frac{2}{\gamma_+} +\frac{2\sqrt{\gamma_-}}{\gamma_+}.
\end{equation}
\end{corollary}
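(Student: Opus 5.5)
The plan is to bound the two terms in the numerator of the definition \eqref{eq:q_f} of $q_f$ separately, each by a multiple of $\log f'(t)$. Both comparisons come from integrating the pointwise inequalities between $f''f$ and $(f')^2$ supplied by $\gamma_\pm$. Since $\liminf(a+b)\ge\liminf a+\liminf b$, it suffices to prove
\[
\liminf_{t\uparrow T_f}\frac{\log f(t)}{\log f'(t)}\ge\frac1{\gamma_+},
\qquad
\liminf_{t\uparrow T_f}\frac{\int_{m_f}^t\sqrt{f''/f}\,ds}{\log f'(t)}\ge\frac{\sqrt{\gamma_-}}{\gamma_+}.
\]

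First we record the standing facts. Under the hypotheses of \Cref{thm:dimbound}, $f(t)\to+\infty$ as $t\uparrow T_f$. Also $f'(t)\to+\infty$: if $T_f<+\infty$ this follows from the definition of $T_f$ and left-continuity, and if $T_f=+\infty$ it follows from \eqref{eq:loggrowth}. Hence there is $t_0<T_f$ such that $f,f'>0$ on $[t_0,T_f)$, so $\log f$ and $\log f'$ are $C^1$ there (recall $f\in C^2$). Fix $\delta>0$ small. Enlarging $t_0$, we may assume
\[
(\gamma_- -\delta)_+\,(f')^2\le f''f\le(\gamma_++\delta)(f')^2\qquad\text{on }[t_0,T_f).
\]

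\emph{First term.} The upper inequality reads $(\log f')'=f''/f'\le(\gamma_++\delta)f'/f=(\gamma_++\delta)(\log f)'$. Integrating from $t_0$ gives
\[
\log f'(t)\le(\gamma_++\delta)\log f(t)+C_\delta .
\]
Dividing by $\log f'(t)\to+\infty$ yields $\liminf \log f/\log f'\ge 1/(\gamma_++\delta)$.

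\emph{Second term.} The lower inequality gives $\sqrt{f''/f}\ge\sqrt{(\gamma_--\delta)_+}\,f'/f$. Integrating, and noting that the integrand is nonnegative so the piece over $[m_f,t_0]$ is a fixed constant, we get
\[
\int_{m_f}^t\sqrt{f''/f}\,ds\ge\sqrt{(\gamma_--\delta)_+}\,\log f(t)-C_\delta .
\]
Combining this with the first step, and using $\log f(t)\to+\infty$, gives $\liminf(\cdot)/\log f'\ge\sqrt{(\gamma_--\delta)_+}/(\gamma_++\delta)$. Letting $\delta\downarrow0$ in both bounds and inserting them into \eqref{eq:q_f} gives \eqref{eq:correctq}.

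There is no genuine obstacle in this argument. The only care needed is to guarantee $\log f'\to+\infty$, which uses \eqref{eq:loggrowth} in the case $T_f=+\infty$, so that the additive constants $C_\delta$ are negligible. The case $\gamma_-=0$ is harmless, since the second bound is then trivially $\ge0$.
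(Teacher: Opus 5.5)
Your argument is correct and is essentially the paper's proof. You use the same lower bound $\int_{m_f}^t\sqrt{f''/f}\ge\sqrt{(\gamma_--\delta)_+}\,\log f(t)-C_\delta$ for the integral term, and you obtain $\liminf_{t\uparrow T_f}\log f/\log f'\ge 1/\gamma_+$ by integrating $(\log f')'\le(\gamma_++\delta)(\log f)'$ directly, which is just an inline proof of the l'H\^opital-for-liminf step the paper cites.

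One small correction: when $T_f<+\infty$, $f'(t)\to+\infty$ does not follow from the definition of $T_f$ and left-continuity alone, since that limit equals $f'(T_f)$, which can be finite. It follows instead from the hypothesis $f(t)\to+\infty$ as $t\uparrow T_f$, because $f(t)=f(t_0)+\int_{t_0}^t f'$.
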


\begin{remark}\label{rmk:liminfqf}
The exponent $q_f$ in \eqref{eq:q_f} is defined via a $\liminf$, and we do not expect that one can
replace it by a $\limsup$ in the generality of \Cref{thm:dimbound}.
The reason is the following: Proposition~\ref{prop:aprioribound} provides a priori $L^1$ bound on the function $f'(u)\,f(u)^2\,H(u)$ where $H$ is a function depending on $f$ (see Definition~\ref{def:Hnu}), and then the $L^q$ estimate ultimately relies on a \emph{uniform} pointwise comparison for large $t$,
schematically of the form (cf. \eqref{eq:higherintegrability})
\[
f'(t)^q \;\lesssim\; f'(t)\,f(t)^2\,H(t)\qquad\text{for all $t$ sufficiently close to $T_f$},
\]
which is exactly the information captured by a $\liminf$ in \eqref{eq:q_f}.
A $\limsup$ would only control the right-hand side along a subsequence of $t$'s and would allow arbitrarily deep
downward oscillations at intermediate values, where the above comparison fails.

This ``worst--case'' nature is consistent with the oscillatory phenomena discovered by Villegas (see \cite{villegas21} and Appendix~\ref{app:Villegas} below):
in dimensions where singular stable solutions exist (e.g., $n\ge 10$), Villegas constructed convex increasing nonlinearities and
singular stable radial solutions for which the stability potential oscillates sharply across scales, in the sense that
$r^2 f'(u(r))$ can have $\liminf_{r\downarrow 0} r^2 f'(u(r))=0$ while $\limsup_{r\downarrow 0} r^2 f'(u(r))$ remains at the
Hardy threshold $(n-2)^2/4$.
Such examples show that, near a singularity, quantities governed by stability may be arbitrarily small along infinitely many
scales even if they are large along other scales. For general $f$ we therefore expect that the admissible integrability range
for $f'(u)$ is dictated by the least favorable asymptotic regime of $f$, and hence that the $\liminf$ in \eqref{eq:q_f} is the
natural (and in this sense optimal) quantity.
\end{remark}

\begin{remark}
If $f$ grows very slowly (e.g.\ $f(t)\sim t\log t$ as $t\to+\infty$), then \eqref{eq:loggrowth} fails and our result does not apply.
On the other hand, in standard slowly growing regimes the problem is subcritical in all dimensions and $\sing(u)=\emptyset$ follows from standard elliptic regularity and Sobolev embeddings.
\end{remark}

\begin{remark}\label{rmk:f'bound}
    We record that, in \Cref{prop:ff'} below, we additionally obtain  for all $(u,f)\in\sol(B_1)$ the universal local bound:
\begin{equation*}
 \int_{B_{1/2}} f'(u)f(u)\,dx
 \le C(n)\, \|u\|_{L^\infty(B_1)},
\end{equation*}
which was known only qualitatively (see for example \cite[Theorem 1]{DongFeng2}). 
\end{remark}

\subsection{Application of our results to model cases}\label{sect:model}

A useful quantity to discuss explicit examples with $f\in C^2$ is the function
\begin{equation}\label{eq:deftauintro}
\tau_f(t) := \frac{f''(t)\,f(t)}{f'(t)^2}.
\end{equation}

If $T_f=+\infty$ and $\tau_\infty:=\lim_{t\to+\infty}\tau_f(t)$ exists, then necessarily $\tau_\infty\le 1$
(otherwise $f$ would blow up in finite time).
Applying \eqref{eq:correctq} with $\gamma_-=\gamma_+=\tau_\infty$ yields $q_f\ge 5$ and hence $\dim_{\mathcal{H}}\sing(u)\le n-10$, the expected bound in view of \cite{actadim9}.

Let us discuss some models examples.

\begin{example}\label{rem:model}
Let $(u,f)\in\sol(B_1)$ with $B_1\subset\R^n$.
\begin{itemize}
\item[(i)] If $f(t)=(1+t)^p$ with $p>1$, then $\tau_f(t)\equiv (p-1)/p$, so
\[
q_f=1+2\sqrt{\frac{p}{p-1}} +2\frac{p}{p-1},
\qquad
\dim_{\mathcal{H}} \sing(u)\le n-2-4\frac{p}{p-1}-4\sqrt{\frac{p}{p-1}}<n-10.
\]

\item[(ii)] If $f(t)=e^t,\,e^{t^2},\,e^{e^t}$, etc., then $\tau_f(t)\to 1$ as $t\to+\infty$, hence $q_f=5$ and
\[
\dim_{\mathcal{H}} \sing(u)\le n-10.
\]

\item[(iii)] If $f(t)=(1-t)^{-p}$ with $p>0$, then $T_f=1$ and $\tau_f(t)\to (p+1)/p$ as $t\uparrow 1$, so
\[
q_f=1+2\sqrt{\frac{p}{p+1}} +2\frac{p}{p+1},
\]
and therefore
$$
\dim_{\mathcal{H}} \sing(u)\le n-2-4\frac{p}{p+1}-4\sqrt{\frac{p}{p+1}}<n-2.
$$

\item[(iv)] If $f(t)=\exp((1-t)^{-p})$ ($p>0$), $\exp\exp\!\big(\frac{1}{1-t}\big)$, etc., then $T_f=1$ and $\tau_f(t)\to 1$ as $t\uparrow 1$, hence $q_f=5$ and
\[
\dim_{\mathcal{H}} \sing(u)\le n-10.
\]
\end{itemize}
Notice that the exponent $q_f$ is
$$q_f=1+2\liminf_{t\uparrow T_f} \big(\alpha_f(t)+\beta_f(t)\big) \geq 1+2\liminf_{t\uparrow T_f} \alpha_f(t).$$ 
where we defined the two functions
\[
\alpha_f(t):=\frac{\log f(t)}{\log f'(t)}
\qquad\text{and}\qquad
\beta_f(t):=\frac{\int^t_{m_f} \sqrt{{f''(s)}/{f(s)}}\,ds}{\log f'(t)}.
\]
Also,
if $g'\sim f'$ and both $f$ and $f'$ blow up as $t \to T_f$, then $\liminf_{t\uparrow T_f} \alpha_f(t)=\liminf_{t\uparrow T_f} \alpha_g(t)$. Based on this observation, the estimates above yield the following:
\begin{itemize}
\item If $g'\sim f'$ with $f$ as in model cases (i) or (iii), then
\[
q_g\geq 1+2\frac{p}{p-1},
\qquad \dim_{\mathcal{H}} \sing(u)\le n-2-4\frac{p}{p-1}.
\]
\item If $g'\sim f'$ with $f$ as in model cases (ii) or (iv), then
\[
q_g\geq 3,
\qquad \dim_{\mathcal{H}} \sing(u)\le n-6.
\]

\end{itemize}
\end{example}

\subsection{The two-dimensional case}\label{sec:2D}
As anticipated, in dimension two we are able to get a very precise result.
The key is the following universal interior bound for \(f(u)\), whose proof relies on a localized second order delayed-ODE argument for averages of \(f(u)\).
\begin{proposition}
\label{prop:supf-2D}
    Assume $(u,f)\in\sol(B_1)$ with $B_1\subset \R^2$. Then
    \begin{equation}
    f(u)\le C\, \|u\|_{L^\infty(B_{1})}
    \qquad\text{a.e. in }B_{3/4},
    \end{equation}
    where $C$ is a purely dimensional constant.
\end{proposition}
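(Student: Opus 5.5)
We propose to reduce the bound to a differential inequality for the circular averages of $f(u)$, exploiting that in two dimensions the stability inequality with a logarithmic-type test function is critical. Normalize $M:=\|u\|_{L^\infty(B_1)}$; by scaling $u\mapsto u/M$, $f\mapsto f(M\cdot)/M$ (which preserves $\sol$), we may assume $M=1$, so $|u|\le 1$ and we must show $f(u)\le C$ a.e. in $B_{3/4}$. Note that $u\le 1$ forces $f(u)<+\infty$ only a.e.; the point is a quantitative bound. We begin from two a priori facts. First, testing the equation with a cutoff gives $\int_{B_{7/8}} f(u)\le C$, since $\int f(u)\xi=-\int u\Delta\xi\le C$. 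Second, testing stability with $\xi=f(u)\eta$-type functions (as in \Cref{rmk:f'bound}, i.e.\ \Cref{prop:ff'}) gives $\int_{B_{7/8}} f'(u)f(u)\le C$.

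Fix $x_0\in B_{3/4}$ and set $\phi(r):=\fint_{\partial B_r(x_0)} f(u)$ for $r\in(0,1/8)$. Since $\nabla(f(u))=f'(u)\nabla u$, we plan to derive a second order inequality for $\phi$. The equation gives the averaged flux $r\,\bar u'(r)=-\frac{1}{2\pi}\int_{B_r}f(u)$ with $\bar u$ the circular mean of $u$. Stability, tested with radial functions $\xi(|x-x_0|)$ and combined with the identity $-\Delta f(u)=f'(u)f(u)-f''(u)|\nabla u|^2\le f'(u)f(u)$ in the distributional sense (convexity), should give something like
\[
\bigl(r\phi'(r)\bigr)'\ \ge\ -\,r\,\fint_{\partial B_r} f'(u) f(u),
\]
and then the key step is to control the right-hand side by $\phi$ evaluated at a comparable but smaller radius, producing a delayed inequality of the form $(r\phi')'(r)\ge -c\,\phi(\theta r)\,\Psi(r)$ with $\int_0 \Psi(r)\,r\,dr$ small. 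Here the two-dimensional scaling enters: $\int_{B_r}f'(u)\le C$ uniformly in $r$ (stability with cutoff, $r^{n-2}=1$), and with the logarithmic test function from \eqref{eq:2Dhardy} one gains the extra $\log$ decay making the kernel integrable against $dr/r$.

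Having the delayed ODE, we would run a comparison/iteration argument: starting from $\int_{B_{1/8}}\phi(r)\,r\,dr\le C$, an iteration over dyadic radii $r_k=2^{-k}/8$ should show $\sup_k\phi(r_k)\le C$, because the delay lets one bound $\phi$ at scale $r_{k+1}$ by a combination of averages at scales $r_k,r_{k-1}$ with coefficients summing to $1+\delta_k$, $\prod(1+\delta_k)<\infty$. Letting $r\to0$ and using Lebesgue points (together with lower semicontinuity and \Cref{thm:open} to know $f(u)$ is continuous on $\reg(u)$ and $u$ is continuous) yields $f(u(x_0))\le C$ a.e.

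The main obstacle is the second step: closing the inequality, i.e.\ estimating the circular average of $f'(u)f(u)$ by $\phi$ with an integrable (in $dr/r$) coefficient, uniformly in $f$. I would first try testing stability with $\xi=\sqrt{f(u)}\,\eta(|x-x_0|)$ with $\eta$ logarithmic in $r$, yielding $\int f'(u)f(u)\eta^2\lesssim\int f(u)|\nabla\eta|^2+$ terms controlled by the equation, and then use the coarea/averaging in $r$ to convert this into the delayed form; if pointwise-in-$r$ control fails, I would work with the integrated quantity $\Phi(r)=\int_{B_r}f(u)$ instead of $\phi$, for which such a delayed inequality is more natural.
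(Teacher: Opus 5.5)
There is a genuine gap, at exactly the step you flag as the main obstacle, and the route you sketch for it does not close. Your starting inequality $(r\phi')'\ge -r\fint_{\partial B_r}f'(u)f(u)$ uses only $-\Delta f(u)\le f'(u)f(u)$, which is convexity, not stability. Integrating it twice, bounding $f(u(x_0))$ amounts to bounding the logarithmic potential $\int f'(u)f(u)\log(r_0/|x-x_0|)\,dx$. The $L^1$ control of $f'(u)f(u)$ from \Cref{prop:ff'} only gives $f(u)\in L^p$ for $p<\infty$; this is exactly \Cref{lem:W2p}.

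To do better you propose bounding $\fint_{\partial B_r}f'(u)f(u)$ by $\phi(\theta r)\Psi(r)$. That requires controlling the values of $f(u)$ on $\partial B_r$, weighted by $f'(u)$, by an average of $f(u)$. This is a Harnack inequality for $f(u)$, which is essentially the proposition itself. Tests such as $\sqrt{f(u)}\,\eta$ give integrated bounds of the kind you already have, not this sphere-by-sphere product.

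Even granting it, the resulting second-order inequality $\phi_{tt}\ge -a(t)\phi(\cdot)$, with $t=\log(1/r)$ and $\|a\|_{L^1(dt)}$ small, does not bound $\phi$. In two dimensions spherical means have the neutral mode $\log(1/r)$, and nothing controls $\phi_t$. Consider a profile equal to $1$, then increasing linearly in $t$ with arbitrary slope on an interval of length $T\approx 1/\|a\|_{L^1}$, then constant. It satisfies the inequality, since only the final concave kink costs $a$-mass, about $1/T$. Starting the ramp late keeps $\int\phi\,r\,dr$ bounded, yet $\phi$ becomes arbitrarily large.

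Integrating twice instead puts the weight $\log(1/|x|)$ on $f'(u)$, which stability does not control. In \Cref{rmk:finitef} with $n=2$ one has $f'(u)\sim a|x|^{-2}\log^{-2}(1/|x|)$, so $\int f'(u)\log(1/|x|)=+\infty$. Also, $\int_{B_r}f'(u)\lesssim 1/\log(1/r)$ is not integrable against $dr/r$; only the total mass $\int_{B_{r_0}}f'(u)$ is small. So a dyadic product $\prod(1+\delta_k)$ with $\delta_k\sim\int_{B_{r_k}}f'(u)$ diverges.

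The missing idea is to work with the flux, which is first order and exact, rather than with $\Delta f(u)$. For the ball average $\mathcal W(t)=\fint_{B_r}f(u)+A_0$, \Cref{lem:ODEformula} gives $\mathcal W_{tt}-2\mathcal W_t=2\fint_{\partial B_r}f'(u)\,x\cdot\nabla u$. Split $u=h_r+p_r$ in $B_r$, with $h_r$ harmonic and $h_r=u$ on $\partial B_r$.
\begin{itemize}
\item The $h_r$-part of the flux equals $\int_{B_r}\Delta f(h_r)\ge0$ by convexity, so it has the good sign and can be dropped.
\item For the other part, $\|\nabla p_r\|_{L^\infty(B_r)}\le C r\,W(er)$ by $W^{2,3}$ estimates and \Cref{lem:W2p}. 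This is why $A_0=\int_{B_{9r_0}}f(u)f'(u)$, controlled by \Cref{prop:ff'}, is built into $W$.
\end{itemize}
This gives precisely the decoupled product you were after: $\mathcal W_{tt}-2\mathcal W_t\ge -A_+(t)\,\mathcal W(t-1)$, with $\int A_+\,dt\le C\int_{B_{r_0}}f'(u)\le C/\log(1/r_0)$. There is no logarithmic weight, and the delay is toward the \emph{larger} radius $er$ (the past in $t$), not a smaller one.

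Ball averages also provide the a priori bound $\lambda:=\mathcal W_t/\mathcal W\le 2$, since $r^2W$ is increasing. In addition, $f(u)\in L^3_{\loc}$ rules out the $r^{-2}$ mode, giving $\liminf\lambda<1$. The analysis of the delayed Riccati equation for $\lambda$ then yields $W(r)\le CW(r_0)$ for $r<r_0$, and letting $r\downarrow0$ concludes.
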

Previously, this result was known only in the radial case, see \cite[Theorem 1.1]{LuoYeZhou}.

\begin{theorem}\label{thm:C11-2D}
Let $(u,f)\in\sol(B_1)$ with $B_1\subset\R^2$. Then
\[
\|D^2u\|_{L^\infty(B_{1/2})}\le C\,\|u\|_{L^1(B_1)},
\]
where \(C\) is universal. Moreover,
\begin{equation}
	\text{ either $\sing(u)= \emptyset$,}\quad\text{ or $T_f<+\infty$ and $f(T_f)<+\infty$.}
\end{equation}
\end{theorem}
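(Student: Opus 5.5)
The plan has three steps. First, reduce to an $L^\infty$-based Hessian bound using \Cref{prop:supf-2D}. Then prove that bound by a case split at each point, with an obstacle-type rescaling near points where $u$ is close to $T_f$. Finally, interpolate to pass from $L^\infty$ to $L^1$ norms, and read off the dichotomy on $\sing(u)$.

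\textbf{Reduction to an $L^\infty$ estimate.} By \Cref{prop:supf-2D}, on $B_{3/4}$ we have $0\le f(u)\le C\|u\|_{L^\infty(B_1)}=:CM$. Hence $\Delta u\in L^\infty(B_{3/4})$, and Calder\'on--Zygmund gives $u\in W^{2,p}$ for every $p<\infty$, so $u\in C^{1,\alpha}$ with norm $\lesssim M$. This misses the borderline $p=\infty$, which is where the structure of $f$ must enter. The target is first
\[
\|D^2u\|_{L^\infty(B_{1/2})}\le C\|u\|_{L^\infty(B_1)}.
\]

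\textbf{Hessian bound via a case split.} Fix $x_0\in B_{1/2}$ and set $d:=T_f-u(x_0)\ge0$.

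If $T_f=+\infty$, or more generally if $f'(u)$ stays controlled, the nonlinearity is locally Lipschitz along the range of $u$. Then $f(u)\in C^{0,1}$, and Schauder theory closes the estimate, provided the Lipschitz constant of $f$ on the relevant range is bounded by $CM$ in a scale-invariant way. To get that bound I would use stability: for convex $f$, a cutoff test gives $\int_{B_r}f'(u)\le C$ in 2D (\Cref{rmk:L1bound}). Combined with \Cref{rmk:f'bound}, which gives $\int f'(u)f(u)\le CM$, this should control $\nabla(f(u))=f'(u)\nabla u$ in averaged norms.

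The genuinely delicate regime is near high-level points, where $u$ is close to $T_f<+\infty$ and $f'(u)$ may blow up while $f(u)\le CM$. Here I would argue as for the obstacle problem. Set $r:=\sqrt{d/M}$ and rescale
\[
v(y):=\frac{u(x_0+ry)-u(x_0)}{Mr^2}.
\]
Then $-\Delta v=f(u)/M\in[0,C]$, and $v\le d/(Mr^2)=1$, so $v$ lies below a universal constant. A quadratic growth estimate in the spirit of Caffarelli, namely $T_f-u\lesssim M\,\dist^2$ to the contact-like set $\{u\ \text{near}\ T_f\}$, should then give $|D^2v|\le C$ at the origin. This uses the nondegeneracy of the monotone right-hand side together with the \emph{uniform} bound on $f(u)$, and it requires a $C^{1,1}$ estimate for functions with bounded Laplacian lying below a ceiling. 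For that estimate I would try Caffarelli's argument for $\Delta w=g(w)$ with $g$ monotone. In the complementary regime, where $d\gtrsim Mr_0^2$ for a fixed $r_0$, $u$ stays a fixed distance below $T_f$ on a ball of fixed size, and the first case applies with constants independent of $f$ after a compactness argument via \Cref{thm:compactness}. I expect this step to be the main obstacle: producing a bound on $D^2u$ that is uniform in $f$ from only $f(u)\in L^\infty$ and monotone convex structure. The key difficulty is ruling out $\log$-type Hessian blow-up, as for merely bounded Laplacians, using that $f(u)$ is monotone in $u$ and hence inherits the regularity of $u$ except where $f'$ is huge, which is exactly where $u$ is near its maximum level.

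\textbf{Interpolation and the singular set.} To replace $\|u\|_{L^\infty}$ by $\|u\|_{L^1}$, use superharmonicity: $u\ge \inf$ is controlled by the mean value property, giving a lower bound on $u$ by its $L^1$ norm. An upper bound is needed too. Apply the $L^\infty$ estimate on balls $B_\rho(x)$ with rescaling, and use the standard interpolation $\|u\|_{L^\infty(B_\rho)}\le \delta\rho^2\|D^2u\|_{L^\infty}+C_\delta\rho^{-2}\|u\|_{L^1}$, absorbing via the usual Simon-type iteration lemma.

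For the dichotomy, note that if $\sing(u)\ne\emptyset$ then $u$ attains $T_f$, while $u$ is bounded by the estimate. Hence $T_f<\infty$, and $f(T_f)=\lim f(u)\le CM<\infty$ by \Cref{prop:supf-2D} and the continuity of \Cref{thm:open}.
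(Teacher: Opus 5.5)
There is a genuine gap, and it sits in the step you flag as the main obstacle: the $L^\infty$-based Hessian bound is never proved, and the route you sketch would not give it. Your intrinsic rescaling $r^2=d/M$ is in the right spirit, but the case split is measured against the wrong level. The estimate must be uniform in $f$, and $f'$ is not bounded on the range of $u$ in terms of $\|u\|_{L^\infty(B_1)}$, even when $T_f=+\infty$. What convexity does control is only the product $f'(t)\,(M-t)\le f(M)-f(t)\le f(M)$ for $t<M$, where $M:=\operatorname*{ess\,sup}_{B_{3/4}}u$ is the local maximum, not $T_f$.

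This breaks your argument in three places. First, your ``first case'' (a scale-invariant Lipschitz bound for $f$ on the range of $u$) is false in general. The averaged bounds $\int f'(u)\le C$ and $\int f'(u)f(u)\le CM$ cannot be upgraded to $L^\infty$ control of $D^2u$, because Calder\'on--Zygmund fails at $p=\infty$ and Schauder needs pointwise (H\"older or Dini) control of $f(u)$. Second, the ``complementary regime'' $d\gtrsim Mr_0^2$ has the same defect: points far below $T_f$ can be arbitrarily close to $\sup u$, where $f'$ can be arbitrarily large. A compactness argument via \Cref{thm:compactness}, which only gives $W^{1,2}_\loc$ convergence, does not by itself produce a quantitative Hessian bound. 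Third, in the obstacle-type regime you invoke a $C^{1,1}$ estimate ``for functions with bounded Laplacian lying below a ceiling''. This is false as stated. Caffarelli's argument needs regularity of the right-hand side off the contact set, and that is precisely what is in question here.

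The missing idea is to use $M$ as the ceiling and close the estimate by differentiating the equation after the intrinsic rescaling. Note that $f(M)\le C\|u\|_{L^\infty(B_1)}$ by \Cref{prop:supf-2D}. With $w:=M-u\ge0$ and $g(s):=f(M-s)$, one has $\Delta w=g(w)$ and $0\le -s\,g'(s)\le g(0)$.

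At $y\in B_{1/2}$ with $w(y)\le g(0)/64$, set $\rho^2:=w(y)/g(0)$ and $v(x):=w(y+\rho x)/w(y)$. Then $v(0)=1$ and $0\le\Delta v\le 1$. Harnack and interior estimates give $|\nabla v|\le C$ and $v\ge\tfrac12$ near $0$. The differentiated equation $\Delta\partial_e v=\rho^2 g'(w(y)v)\,\partial_e v$ has coefficient bounded by $1/v\le 2$. Hence $|D^2v(0)|\le C$, which scales back to $|D^2u(y)|\le C\,g(0)\le C\|u\|_{L^\infty(B_1)}$.

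If instead $w(y)>g(0)/64$, Harnack gives $w\ge c\,g(0)$ on a ball of fixed size. There $|g'(w)|\le C$, and the same differentiation works at unit scale. On $\{w=0\}$ one uses that $D^2w=0$ a.e. This mechanism treats $T_f=+\infty$ and $T_f<+\infty$ simultaneously and replaces your Caffarelli-type heuristic.

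Your interpolation step and your derivation of the dichotomy for $\sing(u)$ are fine once this bound is available.
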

The following example shows that the $C^{1,1}$ regularity is optimal in dimension two: indeed one can build examples with $f(T_f)<+\infty$ and $f'(T_f)=+\infty$. In particular, $\sing(u)\neq \emptyset$ and $u\notin C^{2,\eps}$ for any $\eps>0$. We explain in \Cref{rmk:weiss-monneau} below why we do not expect $C^2$ regularity to hold in such generality, drawing a connection with the regularity theory of singular points in the obstacle problem.
\begin{remark}\label{rmk:finitef}
For $n\ge2$, consider the radial solution of
\[
u''+(n-1)\tfrac1r u' = -f(u),\qquad u(0)=1,\qquad u'(0)=0,
\]
with, for some $a>0$, and $t_0$ close to 1,
\[
f(t):= 1 - a\big(\log(\tfrac{1}{1-t})\big)^{-1} \quad\text{for all }t\in[t_0,1).
\]
One can check that, for $t_0$ sufficiently close to $1$, it holds
\[
f\in\NN,\quad f(1-)=1,\quad f'(t)=\tfrac{a}{1-t}\big(\log(\tfrac{1}{1-t})\big)^{-2}\to+\infty \text{ as }t\uparrow 1,
\]
so $T_f=1$. One checks that
\[
u(r)=1-\big(\tfrac{1}{2n}-O(\log(1/r)^{-1})\big)r^2,
\]
hence $1-u(r)\sim r^2$, and therefore
\[
f'(u(r))\sim 4a\, r^{-2} \big(\log(r^{-2})\big)^{-2}\qquad\text{ as }r\downarrow0.
\]
By comparison with Hardy's inequality, $(u,f)\in \sol(B_{r_0})$ for $r_0$ small enough and $a<1/4$ (cf. \eqref{eq:2Dhardy} above). 
\end{remark}

\subsection{Comparison with the existing literature}\label{sect:compare}

The literature for \eqref{eq:PDE} is abundant; we recall here only previous partial regularity results.
Most works are restricted to particular nonlinearities and determine a critical dimension $n_f$ below which all stable solutions are smooth,
but {\it do not} provide a bound of the form $\dim_{\mathcal{H}}\sing(u)\le n-n_f$ (with the exception of a few specific model nonlinearities).

\begin{itemize}
\item For $f(t)=e^t$ (Liouville equation), Wang \cite{wangexp,wangerratum} proved an $\eps$-regularity theorem and $L^{5-}$ bounds for $e^u$, obtaining that the singular set has dimension at most $n-10$ (this corresponds to our Example~\ref{rem:model}(ii)).
Earlier partial regularity results were obtained by Da Lio for stationary solutions in dimension $3$ \cite{dalio}.

\item For $f(t)=t^p$ (Lane--Emden equation), partial regularity results (covering finite Morse index)
were proved by Pacard \cite{pacard}, D\'avila--Dupaigne--Farina \cite{davilapower}, and Wang \cite{wangpower}.

\item For $f(t)=(1-t)^{-p}$ (MEMS-type), Mignot--Puel \cite{mignotpuel} identified the critical dimension in terms of $p$ (the critical dimension coincides with the formula obtained in Example~\ref{rem:model}(iii)).

\item For $f$ satisfying a Crandall--Rabinowitz-type condition
\[
f''(t)f(t) \ge \gamma_-\, f'(t)^2
\quad\text{for all $t$ close to $T_f$}
\]
the best results in the regular case are due to Ye--Zhou \cite{DongFeng}, which we recover and extend to the singular case 
(see \Cref{rmk:lapfp}).
For singular nonlinearities, we also recover the recent results of Bruera--Cabr\'e \cite{cabrebruera}
and the related results of Luo--Ye--Zhou \cite{LuoYeZhou}.
\end{itemize}

\section{\texorpdfstring{$\eps$}{}-regularity results}

\subsection{Notation}

Throughout this section, unless otherwise specified, balls are centered at the origin:
$B_r := B_r(0)$. (All statements are translation invariant, so centering at $0$ causes no loss
of generality.)

We will use the averaged quantities
\begin{equation*}
    \avg_r(u):=\fint_{B_r} u \,dx ,\qquad 
    \osc_{r}(u):=\fint_{B_r}\bigl|u-\avg_r(u)\bigr|\,dx.
\end{equation*}
Note that $\osc_r(u)$ is the \emph{mean} oscillation, rather than an $L^\infty$
oscillation. This is important since $u$ may be unbounded.

For $M$, $r>0$, we also consider the auxiliary functional
\begin{equation}\label{eq:defQ}
Q^M_r(u):= \avg_r(u) + M \cdot \osc_r(u).
\end{equation}
Heuristically, $Q_r^M(u)$ is an ``upper Campanato envelope'' of $u$ at scale $r$.

Given a function $w$ and a scale $r>0$, we denote by
\[
w_r(x):=w(rx)
\]
its rescaling. Then, by a change of variables, one has
\[
\avg_r(u) = \avg_1(u_r),\qquad \osc_r(u) = \osc_1(u_r).
\]
Also, if $-\Delta u=f(u)$ in $B_1$, then $-\Delta u_r = r^2 f(u_r)$ in $B_{1/r}$, so the natural
rescaling of the nonlinearity is $f \mapsto r^2 f$.

\subsection{Oscillation decay lemma}

The next lemma is the key compactness input: it says that if the set where $f'(u)$ is large
has very small measure at a certain scale, then the mean oscillation of $u$ decays going to a smaller scale. This is
a De~Giorgi/Campanato-type mechanism adapted to the stability inequality.

\begin{lemma}\label{lem:decay}
Given $\lambda>0$ and $\theta\in(0,\tfrac13)$ there is $\eta=\eta(n,\theta,\lambda)>0$ with the following property.
If $(u,f)\in \sol(B_1)$ and
\begin{equation}\label{eq:epsass}
|\{f'(u)\ge \lambda\}\cap B_{1/2}|\le \eta,
\end{equation}
then
\[
\osc_\theta(u) \le C(n)\,(1+\lambda)^{n+1}\,\theta\cdot  \osc_1(u).
\]
\end{lemma}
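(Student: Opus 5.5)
The plan is a compactness argument by contradiction, after a normalization that leaves the hypothesis \eqref{eq:epsass} unchanged. If $\osc_1(u)=0$ there is nothing to prove, so assume $\osc_1(u)>0$. Set $a:=\avg_1(u)$, $o:=\osc_1(u)$, $v:=(u-a)/o$ and $g(t):=o^{-1}f(a+ot)$. Then $g\in\NN$ and $-\Delta v=g(v)$. Moreover $g'(v)=f'(u)$ pointwise, so the stability inequality and the set $\{g'(v)\ge\lambda\}$ are exactly those of $u$. Hence we may assume $\avg_1(u)=0$ and $\osc_1(u)=1$. Suppose the lemma fails for some $\lambda,\theta$ and a constant $C(n)$ to be fixed at the end. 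Then there are $(u_k,f_k)\in\sol(B_1)$, normalized as above, with $|\{f_k'(u_k)\ge\lambda\}\cap B_{1/2}|\le 1/k$ and $\osc_\theta(u_k)>C(n)(1+\lambda)^{n+1}\theta$.

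\textbf{Step 1: compactness.} Each $u_k$ is superharmonic and bounded in $L^1(B_1)$. Testing the equation with a cutoff $\varphi$ gives
\[
\int f_k(u_k)\varphi=\int u_k(-\Delta\varphi)\le C,
\]
so $\Delta u_k$ is a locally bounded measure. Standard estimates then bound $u_k$ in $W^{1,p}_\loc$ for $p<n/(n-1)$, so a subsequence converges in $L^1_\loc$ and a.e. By \Cref{thm:compactness}, applied in a slightly smaller ball $B_{1-\delta}$ with $\delta\ll\theta$, the limit satisfies $(u,f)\in\sol(B_{1-\delta})$, with $f_k\to f$ locally uniformly below $\sup u$. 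Passing to the limit in the mean oscillations gives $\osc_\theta(u)\ge C(n)(1+\lambda)^{n+1}\theta$. For $\osc_1(u)\le 1$ I need $L^1$ convergence up to $B_1$ rather than only on $B_{1-\delta}$; I would either use equi-integrability of superharmonic functions, or simply run the argument with $\osc_{1-\delta}$ and absorb the small error.

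\textbf{Step 2: the limit has bounded potential.} By Fatou applied to indicator functions, for a.e. $x\in B_{1/2}$ we have $f_k'(u_k(x))<\lambda$ for all large $k$ along the subsequence, and also $u_k(x)\to u(x)$. Convexity and locally uniform convergence give $f'(u(x))\le\liminf_k f_k'(u_k(x))\le\lambda$, where $f'$ is the left derivative of the limit. This holds at points where $u(x)<\sup u$. At the top level I would argue separately, using that the range of $u$ has full measure only where $u$ is nonconstant, which is case \eqref{eq:specification}; if $u$ is constant the conclusion is trivial. The outcome is $f'(u)\le\lambda$ a.e. in $B_{1/2}$. By monotonicity of $f'$, this means $f$ is $\lambda$-Lipschitz on the essential range of $u$ there, so $u<T_f$ and the equation is genuinely semilinear with bounded derivative.

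\textbf{Step 3: a quantitative linear estimate for the limit.} This is the main obstacle, because the constant must be $C(n)(1+\lambda)^{n+1}$, uniform in $f$. Differentiating, $w=\partial_i u$ solves $-\Delta w=f'(u)w$ with $0\le f'(u)\le\lambda$. Rescaling to balls of radius $\sim(1+\lambda)^{-1/2}$, where the potential becomes $O(1)$, and applying Moser's local $L^\infty$–$L^1$ bound, then covering $B_{1/4}$ by such balls, should give
\[
\sup_{B_{1/4}}|\nabla u|\le C(n)(1+\lambda)^{n+1}\|\nabla u\|_{L^1(B_{1/2})}.
\]
I would track the exponent carefully but expect something of this form.

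To close, I need to bound $\|\nabla u\|_{L^1(B_{1/2})}$ by $C(n)\osc_1(u)\le C(n)$. For this I combine the bound $\int_{B_{3/4}}f(u)\le C$ from the cutoff test with a Green representation or Caccioppoli-type $W^{1,1}$ estimate for $u-\avg_1(u)$. This gives
\[
\osc_\theta(u)\le 2\theta\sup_{B_\theta}|\nabla u|\le C_0(n)(1+\lambda)^{n+1}\theta.
\]
Choosing $C(n)>C_0(n)$ contradicts the lower bound obtained in Step 1. The restriction $\theta<1/3$ only ensures $B_\theta\subset B_{1/4}$ after adjusting the radii.
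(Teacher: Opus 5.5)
Your proof is correct. It agrees with the paper's in the first two steps and takes a genuinely different route in the quantitative step.

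\textbf{Where the proofs agree.} The normalization $g(t)=o^{-1}f(a+ot)$ with $g'(v)=f'(u)$ and the compactness via \Cref{thm:compactness} are the same. So is the conclusion $f'(u)\le\lambda$ a.e.\ in $B_{1/2}$ for the limit. The paper gets this from the difference quotients $\bigl(g_k(t)-g_k(t-\delta)\bigr)/\delta\le g_k'(t)$ together with \eqref{eq:specification}, which is exactly your liminf inequality for left derivatives.

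\textbf{Where they differ.} The paper never differentiates the equation. It uses the mean value inequality to get $v_*\ge -C_0$ in $B_{1/2}$, then tests the equation to get $g_*(-C_0)\le C_1$. From this it deduces the linear growth $g_*(v_*)\le C(1+\lambda)(1+|v_*|)$. It then bootstraps $L^p$ integrability through $W^{2,p}$ estimates and Sobolev embedding, paying one factor $(1+\lambda)$ per step; this is where the exponent $n+1$ comes from.

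You instead split the estimate into two pieces. The first is $\lambda$-free: $\|\nabla u\|_{L^1(B_{1/2})}\le C(n)$, from the $L^1$ bound on $u$ and the mass bound on $\Delta u$. The second is linear: Moser's local boundedness for $\partial_i u$, which solves $-\Delta w=f'(u)w$ with $0\le f'(u)\le\lambda$, applied on balls of radius $\min\{(1+\lambda)^{-1/2},1/12\}$ covering $B_{1/3}$.

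\textbf{What each route buys.} Yours gives the better factor $(1+\lambda)^{n/2}$ and avoids the exponent bookkeeping. The price is that you must justify that $\partial_i u\in H^1_{\loc}(B_{1/2})$ is a weak solution of the linearized equation. This follows from two facts:
\begin{itemize}
\item the chain rule for $f$, which is $\lambda$-Lipschitz on the relevant range; since $\nabla u=0$ a.e.\ on level sets, the choice of one-sided derivative is irrelevant;
\item qualitative $W^{2,2}_{\loc}$ regularity, because $f(u)\le f(a)+\lambda(u-a)_+\in L^2_{\loc}$ for any $a<\sup u$ with $f(a)<+\infty$.
\end{itemize}
The paper's route needs only the equation for $v_*$ itself and standard $L^p$ theory.

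\textbf{Minor points.}
\begin{itemize}
\item Fatou only gives $f_k'(u_k(x))<\lambda$ for infinitely many $k$, not for all large $k$. That is still enough for the liminf.
\item Your worry about $\osc_1$ of the limit is unnecessary. Fatou gives $\fint_{B_1}|u|\le 1$, and that is all you use.
\item The aside ``so $u<T_f$'' is not justified in general. The left derivative at $T_f$ can be finite, and $u$ can touch $T_f$ while $f'(u)$ stays bounded. Nothing in your argument depends on it, so simply drop it.
\end{itemize}
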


\begin{remark}\label{rmk:markov}
A convenient sufficient condition for \eqref{eq:epsass} is given by Markov/Chebyshev inequality:
if
\[
\int_{B_{1/2}} f'(u)\,dx \le \eta\,\lambda,
\]
then \eqref{eq:epsass} holds.
\end{remark}

\begin{proof}[Proof of \Cref{lem:decay}]
We argue by contradiction. Then there exist $(u_k,f_k)\in\sol(B_1)$ and $\eta_k\downarrow 0$ such that
\begin{equation*}
|\{f_k'(u_k)\ge \lambda\}\cap B_{1/2}|\le \eta_k
\quad\text{but}\quad
\osc_\theta(u_k)>  C (1+\lambda)^{n+1}\theta\,\osc_1(u_k),
\end{equation*}
where $C$ will be chosen (depending only on $n$) at the end.

\medskip
\noindent\textbf{Step 1: Normalization.}
Set
\[
a_k:=\avg_1(u_k),\qquad \omega_k:=\osc_1(u_k),\qquad 
v_k(x):=\frac{u_k(x)-a_k}{\omega_k}.
\]
Then $\avg_1(v_k)=0$ and $\osc_1(v_k)=1$, i.e.
\begin{equation}\label{eq:contradictionlem1}
\fint_{B_1}|v_k|=1,\qquad \osc_\theta(v_k)>C (1+\lambda)^{n+1}\theta.
\end{equation}
Define the rescaled nonlinearities
\[
g_k(s):=\frac{f_k(a_k + \omega_k s)}{\omega_k},\qquad s\in\R.
\]
This rescaling ensures $(v_k,g_k)\in\sol(B_1)$ and a direct computation shows that the stability potential is unchanged:
\[
g_k'(v_k)= f_k'(u_k) \quad \text{in }B_1.
\]

\medskip
\noindent\textbf{Step 2: Limit problem.}
Since $g_k(v_k)\ge 0$, each $v_k$ is superharmonic. Using compactness of superharmonic functions
(e.g., \cite[Lemma A.1]{actadim9}), after extracting a subsequence we have
\[
v_k\to v_*\quad\text{in }L^1_{\loc}(B_1),
\]
and by \Cref{thm:compactness} there exists $g_*\in\NN$ such that $(v_*,g_*)\in\sol(B_1)$ and
$g_k\to g_*$ locally uniformly below $\sup v_*$.

Because $\avg_1(v_k)=0$ and $\fint_{B_1}|v_k|=1$, passing to the limit gives
$\avg_1(v_*)=0,$ $\fint_{B_1}|v_*|\le1$ and $\osc_\theta(v_*)\ge C (1+\lambda)^{n+1}\theta$, hence $v_*$ is {not} constant. Therefore we can use
\eqref{eq:specification} in \Cref{thm:compactness} to get that,
for a.e.\ $x\in B_1$ and every fixed $\delta>0$,
\begin{equation}\label{eq:13clean}
g_k(v_k(x)) \to g_*(v_*(x))
\quad\text{ and }\quad
g_k(v_k(x)-\delta) \to g_*(v_*(x)-\delta)\quad\text{ as }k\to+\infty.
\end{equation}

\medskip
\noindent\textbf{Step 3: $g_*$ is $\lambda$-Lipschitz.}
Fix $\delta>0$. Since $g_k$ is convex, for every $t$ one has
\[
\frac{g_k(t)-g_k(t-\delta)}{\delta}\le g_k'(t),
\]
hence
\[
\{g_k(v_k)-g_k(v_k-\delta)\ge \lambda\delta\}\cap B_{1/2}
\subset
\{g_k'(v_k)\ge \lambda\}\cap B_{1/2}
=
\{f_k'(u_k)\ge\lambda\}\cap B_{1/2}.
\]
Recalling \eqref{eq:epsass}, this implies
\[
|\{g_k(v_k)-g_k(v_k-\delta)\ge \lambda\delta\}\cap B_{1/2}|
\le \eta_k\to 0,
\]
that together with \eqref{eq:13clean} gives
\[
g_*(v_*(x)) - g_*(v_*(x)-\delta)\le \lambda\delta\quad\text{for a.e.\ }x\in B_{1/2}.
\]
Letting $\delta\downarrow 0$ and recalling that $g_*'$ denotes the left derivative, we obtain
\[
g_*'(v_*(x))\le \lambda\quad\text{for a.e.\ }x\in B_{1/2}.
\]

\medskip
\noindent\textbf{Step 4: Elliptic bootstrap.}
By the mean value inequality for superharmonic functions, there exists $C_0=C_0(n)$ such that
\[
v_* \ge -C_0\quad\text{in }B_{1/2}.
\]
Since $g_*$ is nondecreasing, this gives $g_*(v_*)\ge g_*(-C_0)$ in $B_{1/2}$.
Choose $\xi\in C_c^\infty(B_{3/4})$ with $\xi\equiv 1$ on $B_{1/2}$ and $0\le \xi\le 1$.
Testing $-\Delta v_*=g_*(v_*)$ against $\xi$ yields
\[
g_*(-C_0)\,|B_{1/2}|
\le
\int_{B_{1/2}} g_*(v_*)\,dx
\le
\int g_*(v_*)\,\xi\,dx
=
-\int (\Delta v_*)\,\xi\,dx
=
-\int v_*\,\Delta \xi\,dx
\le C_1(n),
\]
where we used $\fint_{B_1}|v_*|=1$ and the boundedness of $\Delta\xi$.
Therefore,
\[
0\le g_*(-C_0)\le C_1(n).
\]
On the other hand, since $g_*'$ is bounded by $\lambda$ a.e.\ on the range of $v_*$ in $B_{1/2}$ (recall Step 3),
it follows that $g_*$ is $\lambda$-Lipschitz there, hence for $x\in B_{1/2}$,
\[
0\le g_*(v_*(x)) \le g_*(-C_0) + \lambda\,(v_*(x)+C_0)
\le C(n)(1+\lambda)\,(1+|v_*(x)|).
\]
As a consequence,
\begin{equation}\label{eq:DeltaLp-clean}
\|\Delta v_*\|_{L^p(B_r)}
=
\|g_*(v_*)\|_{L^p(B_r)}
\le C(n,p)\,(1+\lambda)\,\bigl(1+\|v_*\|_{L^p(B_r)}\bigr),
\qquad \forall\,r\in[\tfrac13,\tfrac12],\ \forall p\ge 1.
\end{equation}
We now bootstrap integrability. Fix equally spaced radii
\[
\tfrac12=r_0>r_1>\cdots>r_{n+1}=\tfrac13.
\]
Start from $v_*\in L^1(B_{1/2})$.
By the $L^1$ Poisson estimate, for any $q\in\bigl(1,\frac{n}{n-2}\bigr)$, with the usual convention that \(\frac{n}{n-2}=+\infty\) when \(n=2\),
\[
\|v_*\|_{L^{q}(B_{r_1})}
\le C(n,q)\Big(\|\Delta v_*\|_{L^1(B_{1/2})}+\|v_*\|_{L^1(B_{1/2})}\Big)
\le C(n,q)(1+\lambda)\Big(1+\|v_*\|_{L^1(B_{1/2})}\Big),
\]
where we used \eqref{eq:DeltaLp-clean} with $p=1$.

Now set $p_1:=q$ and, as long as $2p_i<n$, define for $i\ge1$ the Sobolev exponents
\[
p_{i+1}:=\frac{n p_i}{n-2p_i} \qquad \implies\qquad p_{i+1}=\frac{n p_1}{n-2i p_1}.
\]
Interior elliptic regularity and Sobolev embedding give, for $i\ge 1$,
\[
\|v_*\|_{L^{p_{i+1}}(B_{r_{i+1}})}
\le C(n,p_i)\|v_*\|_{W^{2,p_i}(B_{r_{i+1}})}
\le C(n,p_i)\Big(\|\Delta v_*\|_{L^{p_i}(B_{r_i})}+\|v_*\|_{L^{p_i}(B_{r_i})}\Big).
\]
Using \eqref{eq:DeltaLp-clean} to bound $\|\Delta v_*\|_{L^{p_i}}$ in terms of $\|v_*\|_{L^{p_i}}$,
we obtain
\[
\|v_*\|_{L^{p_{i+1}}(B_{r_{i+1}})}
\le C(n,p_i)\|v_*\|_{W^{2,p_i}(B_{r_{i+1}})} \leq C(n,p_i)\,(1+\lambda)\,\bigl(1+\|v_*\|_{L^{p_i}(B_{r_i})}\bigr).
\]
Iterating this bound until $i=m$ with $p_{m}<n/2<p_{m+1}$, it follows that
$$
\|v_*\|_{W^{2,p_{m+1}}(B_{r_{m+2}})} \leq C(n)(1+\lambda)^{m+2}\,\bigl(1+\|v_*\|_{L^{1}(B_{1/2})}\bigr) \leq C(n)\,(1+\lambda)^{m+2},
$$
hence $v_*$ becomes bounded in $B_{r_{m+2}}$.
Plugging boundedness back into \eqref{eq:DeltaLp-clean} yields $\Delta v_*\in L^\infty(B_{r_{m+2}})$.
Therefore elliptic estimates imply $v_*\in W^{2,p}(B_{r_{m+3}})$ for every $p<+\infty$, and choosing $p>n$
gives, by Sobolev--Morrey,
\[
\|\nabla v_*\|_{L^\infty(B_{r_{m+3}})}\le C(n)\,(1+\lambda)^{m+3}.
\]
Noticing that, by choosing $p_1$ very close to $\frac{n}{n-2}$ (or arbitrarily large if \(n=2\)), the exponent $m+2$ is bounded by $n$, we conclude that
\[
\|\nabla v_*\|_{L^\infty(B_{1/3})}\leq  C(n)\,(1+\lambda)^{n+1}.
\]
Finally, since
\[
\osc_\theta(v_*) \le \fint_{B_\theta}|v_*-\avg_\theta(v_*)|
\le \sup_{B_\theta}|v_*-\avg_\theta(v_*)|
\le \theta\,\|\nabla v_*\|_{L^\infty(B_{1/3})},
\]
we conclude
\[
\osc_\theta(v_*) \le \bar C(n)(1+\lambda)^{n+1}\,\theta.
\]
This contradicts \eqref{eq:contradictionlem1} provided $C$ is chosen larger than $\bar C(n)$,
and completes the proof.
\end{proof}

\subsection{A Campanato-type criterion in terms of \texorpdfstring{$Q_r^M$}{}}

The next corollary packages the previous decay lemma into a convenient ``self-propagating '' criterion:
if $f'$ is small at the level $Q_r^M(u)$, then $u$ is regular at the center and enjoys a
quantitative H\"older estimate.

\begin{corollary}\label{cor:Qdecay}
For any $\lambda> 0$ and $\alpha\in(0,1)$, there is $M=M(n,\alpha,\lambda)>0$ with the following property.
If $(u,f)\in \sol(B_1)$ and
\begin{equation}\label{eq:Qass}
    r^2 f'\big(\avg_r(u)+ M\osc_r(u)\big)< \lambda \quad\text{ for some }r\in(0,1),
\end{equation}
then $0\in \reg(u)$. More precisely, $f'(u(0))<\lambda/r^2$ and
\begin{equation}\label{eq:67}
\fint_{B_\rho} |u -u(0)|\le C \left(\frac{\rho}{r}\right)^\alpha\osc_r(u)
\quad \text{ for all } \rho\in(0,r),
\end{equation}
for some constant $C=C(n,\alpha,\lambda)$.
\end{corollary}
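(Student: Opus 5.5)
The plan is to iterate \Cref{lem:decay} along the geometric sequence of scales $r_j:=\theta^j r$ and to show that the hypothesis \eqref{eq:Qass} propagates from each scale to the next. First, by scaling, assume $r=1$ and replace $f$ by $r^2 f$. Fix $\theta\in(0,\frac13)$ so small that $C(n)(1+\lambda)^{n+1}\theta\le \theta^{\alpha}$; this is possible because $\alpha<1$. Let $\eta=\eta(n,\theta,\lambda)$ be given by \Cref{lem:decay}. The first point to check is that \eqref{eq:Qass} at scale $1$ implies \eqref{eq:epsass}. Since $f'$ is nondecreasing, we have $\{f'(u)\ge\lambda\}\subset\{u>Q_1^M(u)\}$, because $f'(Q_1^M)<\lambda$. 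By Chebyshev,
\[
|\{u>\avg_1(u)+M\osc_1(u)\}\cap B_{1/2}|\le \frac{|B_1|\,\osc_1(u)}{M\osc_1(u)}=\frac{|B_1|}{M}.
\]
Choosing $M\ge |B_1|/\eta$ therefore yields \eqref{eq:epsass}, and hence $\osc_\theta(u)\le\theta^\alpha\osc_1(u)$. The degenerate case $\osc_1(u)=0$ is trivial, since then $u$ is constant.

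The second step is to propagate the hypothesis by showing $Q_\theta^M(u)\le Q_1^M(u)$, so that $\theta^2 f'(Q_\theta^M)\le f'(Q_1^M)<\lambda$. This is \eqref{eq:Qass} at scale $\theta$ for the rescaled solution $u_\theta$, whose nonlinearity is $\theta^2 f$. To compare the averages, use $|\avg_\theta(u)-\avg_1(u)|\le \theta^{-n}\osc_1(u)$. Then
\[
Q_\theta^M\le \avg_1+\theta^{-n}\osc_1+M\theta^\alpha\osc_1,
\]
and this is at most $Q_1^M$ provided $\theta^{-n}+M\theta^{\alpha}\le M$. Here is the main obstacle: $M$ is forced large by $\eta(\theta)$, while $\theta$ has to be fixed first. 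I resolve it by running the argument with $\theta$ replaced by some $\theta'<\theta$ in the decay lemma (giving $\theta'^{\alpha'}$ with $\alpha<\alpha'<1$). This gives a gain $(1-\theta^{\alpha})M\ge\theta^{-n}$, which holds once $M\ge 2\theta^{-n}$, and we can simply take $M$ to be the maximum of this and $|B_1|/\eta$. The point is that the order of choices is consistent: first $\theta$ (depending on $n,\alpha,\lambda$), then $\eta$, then $M$. The monotonicity requirement only needs $M$ large and does not feed back into $\theta$. With $\lambda$ fixed, the scaling produces the factor $\theta^2\le1$, which only helps.

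Iterating, we get $\osc_{\theta^j}(u)\le\theta^{j\alpha}\osc_1(u)$, and the averages satisfy
\[
|\avg_{\theta^{j+1}}-\avg_{\theta^j}|\le\theta^{-n}\theta^{j\alpha}\osc_1 .
\]
This series is summable, so $\avg_{\theta^j}(u)$ converges. By the canonical representative this limit is $u(0)$, with
\[
|\avg_{\theta^j}-u(0)|\le C\theta^{j\alpha}\osc_1 .
\]
Combining, $\fint_{B_{\theta^j}}|u-u(0)|\le C\theta^{j\alpha}\osc_1$. For $\rho\in(\theta^{j+1},\theta^j]$ we compare with the ball of radius $\theta^j$ at the cost of a factor $\theta^{-n}$, which gives \eqref{eq:67}. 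Finally, $Q_{\theta^j}^M(u)\to u(0)$, and $f'(Q_{\theta^j}^M)\le f'(Q_1^M)<\lambda$. Since $f'$ is left-continuous and nondecreasing, the values $Q_{\theta^j}^M$ are $\le Q_1^M$ and converge to $u(0)$, so we obtain $f'(u(0))\le f'(Q_1^M)<\lambda$, in original scaling $f'(u(0))<\lambda/r^2$. In particular $f'(u(0))<\infty$, so $u(0)<T_f$ and $0\in\reg(u)$.
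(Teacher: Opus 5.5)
Your proposal is correct and follows the paper's proof essentially step for step: the same order of choices ($\theta$, then $\eta$ from \Cref{lem:decay}, then $M$), the same Chebyshev bound $|\{u>Q^M_1(u)\}|\le |B_1|/M$ to trigger the decay lemma, the same propagation $Q^M_{\theta r}(u)\le Q^M_r(u)$ via $\theta^{-n}+M\theta^\alpha\le M$, and the same Campanato iteration. In that iteration you are actually more careful than the paper, since you justify $|\avg_{\theta^j r}(u)-u(0)|\lesssim\theta^{j\alpha}\osc_r(u)$ by summing the geometric series.

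The one wrinkle is that the ``obstacle'' you describe is not real, so the detour through $\theta'<\theta$ and $\alpha'$ is unnecessary. Once $\theta$ is fixed, simply take $M\ge\max\{|B_1|/\eta,\ \theta^{-n}/(1-\theta^\alpha)\}$. Your shortcut $M\ge 2\theta^{-n}$ needs $\theta^\alpha\le\frac12$, which is not implied by $C(n)(1+\lambda)^{n+1}\theta\le\theta^\alpha$ when $\alpha$ is small. It can be arranged by shrinking $\theta$, since that condition persists for smaller $\theta$.
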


\begin{remark}\label{rmk:open-reg}
Condition \eqref{eq:Qass} is open with respect to small translations: it remains true for
$u(\cdot+z)$ provided $|z|\ll r$. This is crucial to guarantee that the regular set will be open.
\end{remark}


\begin{proof}[Proof of \Cref{cor:Qdecay}]
Fix $\alpha\in(0,1)$. Choose $\theta\in(0,1/3)$ (depending on $n,\lambda,\alpha$) so small that
\[
C(n)(1+\lambda)^{n+1}\,\theta \le \theta^\alpha.
\]
Let $\eta$ be the constant given by \Cref{lem:decay} with these choices of $n,\theta,\lambda$.

\medskip
\noindent\textbf{Step 1: Normalization.}
Consider $(v,g)\in\sol(B_1)$ defined by
\[
v(x):=\frac{u(rx)-\avg_r(u)}{\osc_r(u)},
\qquad
g(s):=r^2\frac{f(\avg_r(u) +s\cdot \osc_r(u))}{\osc_r(u)}.
\]
Then $g'(v)=r^2 f'(\avg_r(u)+ v\,\osc_r(u))$.

Recalling notation \eqref{eq:defQ}, assumption \eqref{eq:Qass} says $r^2 f'(Q_r^M(u))<\lambda$. Since $f'$ is nondecreasing,
if $v(x)<M$ one has $\avg_r(u)+v(x)\,\osc_r(u) < Q_r^M(u)$ and therefore $g'(v(x))<\lambda$.
Hence,
\[
\{ g'(v)\ge \lambda \} \subset \{ v\ge M\}.
\]
By Markov's inequality and the normalization $\osc_1(v)=1$,
\[
|\{ g'(v)\ge \lambda \}|
\le |\{ v\ge M\}|
\le \frac{1}{M}\int_{B_1}|v|
= \frac{|B_1|}{M}.
\]
Thus \Cref{lem:decay} applies provided $M\ge |B_1|/\eta$, and yields
\[
\osc_\theta(v)\le \theta^\alpha \osc_1(v)=\theta^\alpha.
\]
Rescaling back, this becomes
\[
\osc_{\theta r}(u)\le \theta^\alpha \osc_r(u).
\]

\medskip
\noindent\textbf{Step 2: Decay and control $Q_r^M$.}
Repeating the same argument at the smaller scales (using the fact that $Q^M$ is designed to
propagate across scales), we obtain for all $j\ge 0$,
\[
\osc_{\theta^{j+1}r}(u)\le \theta^\alpha \osc_{\theta^{j}r}(u),
\qquad\text{hence}\qquad
\osc_{\theta^{j}r}(u)\le \theta^{\alpha j}\osc_r(u).
\]
Moreover, for the averages we have
\[
|\avg_{\theta r}(u)-\avg_r(u)|
\le \fint_{B_{\theta r}}|u-\avg_r(u)|
\le \frac{|B_r|}{|B_{\theta r}|}\fint_{B_r}|u-\avg_r(u)|
= \theta^{-n}\osc_r(u).
\]
Therefore,
\begin{align*}
Q^{M}_{\theta r}(u)
&=\avg_{\theta r}(u)+M\osc_{\theta r}(u)\\
&\le \avg_r(u)+\theta^{-n}\osc_r(u)+M\theta^\alpha\osc_r(u)
\le Q^M_r(u),
\end{align*}
provided $M$ was chosen so big that $\theta^{-n}+M\theta^\alpha\le M$.

Since $u(0)=\lim_{\rho\downarrow 0}\avg_\rho(u)$ and $\avg_{\theta^j r}(u)\le Q^M_{\theta^j r}(u)\le Q^M_r(u)$,
we get $u(0)\le Q^M_r(u)$, hence by monotonicity of $f'$,
\[
f'(u(0))\le f'(Q^M_r(u))<\frac{\lambda}{r^2}.
\]

\medskip
\noindent\textbf{Step 3: Campanato estimate.}
The geometric decay of $\osc_{\theta^j r}(u)$ implies the Campanato estimate \eqref{eq:67}:
for $\rho\in(0,r)$ choose $j$ such that $\theta^{j+1}r<\rho\le \theta^j r$. Then
\[
\fint_{B_\rho}|u-u(0)|
\le C \fint_{B_{\theta^j r}}|u-u(0)|
\le C \osc_{\theta^j r}(u)
\le C \theta^{\alpha j}\osc_r(u)
\le C\left(\frac{\rho}{r}\right)^\alpha \osc_r(u).
\]
This proves \eqref{eq:67} and completes the proof.
\end{proof}

\subsection{Proofs of \texorpdfstring{\Cref{thm:open}}{} and \texorpdfstring{\Cref{thm:epsreg}}{}}
We first prove that the regular set is always open in the class $\sol$.
\begin{proof}[Proof of \texorpdfstring{\Cref{thm:open}}{}]
Fix $(u,f)\in\sol(B_1)$ and assume $f'(u(0))<+\infty$. Our aim is to show that $f'(u)<+\infty$ in a neighbourhood of the origin. 

By monotonicity of $f'$,
we can choose some $\delta>0$ so that 
$$\lambda:=1+ f'(u(0)+2\delta)<+\infty.$$ 
Let $M=M(n,1/2,\lambda)$ be the constant from \Cref{cor:Qdecay} with $\alpha=\tfrac12$ and this choice of $\lambda$.

By the mean value property for superharmonic functions, $\avg_r(u)\uparrow u(0)$ as $r\downarrow 0$.
Thus, to guarantee \eqref{eq:Qass} for small $r$, it is enough to know that
\begin{equation}\label{eq:oscvanish}
\osc_r(u)\to 0\quad\text{as }r\downarrow 0.
\end{equation}
Indeed, if \eqref{eq:oscvanish} holds, then for $r$ small enough we have
\[
Q^M_r(u)=\avg_r(u)+M\osc_r(u) < u(0)+\delta,
\]
so $f'(Q^M_r(u))\le f'(u(0)+\delta)<\lambda$, hence (since $r<1$)
\[
r^2 f'(Q^M_r(u))<\lambda,
\]
and \Cref{cor:Qdecay} implies $0\in\reg(u)$ and provides a H\"older estimate at the origin.
This also gives openness of $\reg(u)$ by translation (see \Cref{rmk:open-reg}),
and $C^{2,\alpha}$ regularity for $\alpha<1$  inside $\reg(u)$ then follows by standard elliptic regularity.

\smallskip
It now remains to prove \eqref{eq:oscvanish}.
To this aim, we show that blow-ups of $u$ at $x=0$ converge to a constant, which is equivalent to the vanishing
of the mean oscillation.

Let $r_j\downarrow 0$ and set $u_j:=u_{r_j}=u(r_j\,\cdot)$. Given $R>0$,
if $j$ is large enough so that
$Rr_j\le 1/2$ we have
\[
u_j \ge \inf_{B_{1/2}}u\quad\text{in }B_R.
\]
Therefore, since $u$ is superharmonic (and therefore $u_-$ is subharmonic),
for $j$ large (depending on $R$) we have 
\begin{align}\label{eq:00}
\fint_{B_R} |u_j|
&\le \fint_{B_R} u_j -\inf_{B_{1/2}}u +\big|\inf_{B_{1/2}} u \big|\notag\\
&= \fint_{B_R}u_j + 2\big(\inf_{B_{1/2}} u\big)_-\notag\\
&\le u(0) + 2\sup_{B_{1/2}}u_- \le u(0) +C(n)\|u\|_{L^1(B_1)}.
\end{align}
By compactness of superharmonic functions, passing to a diagonal subsequence we may assume
$u_j\to v$ in $L^1_{\loc}(\R^n)$ and a.e.
By \Cref{thm:compactness}, $(u_j,r_j^2 f)\to (v,g)\in \sol(\R^n)$.
But for every fixed $t<\sup_{\R^n}v$, we have $r_j^2 f(t)\to 0$, hence necessarily $g\equiv 0$
on $(-\infty,\sup v)$, and therefore $v$ is harmonic in $\R^n$.

Finally, \eqref{eq:00} implies that $\fint_{B_R} |v|\leq C$ for every $R>0$,  therefore $v$ must be constant by Liouville's theorem.
This proves that every blow-up limit is constant, which is equivalent to \eqref{eq:oscvanish}.
\end{proof}
We turn to the proof of our $\eps$-regularity result for regular nonlinearities, i.e., $T_f=+\infty$.
\begin{proof}[Proof of \Cref{thm:epsreg}]
Assume \eqref{eq:limsup} holds at $x_0=0$ and let $\eps>0$ be chosen (depending only on $n$) so small
that \Cref{lem:decay} applies with $\lambda=1$ and yields a contraction factor $\theta^\alpha$.

Since
\[
\limsup_{r\to 0} r^{2-n}\int_{B_r} f'(u)<\eps,
\]
there exists $r_0>0$ such that for all $r\in(0,r_0)$,
\begin{equation}\label{eq:kl}
   r^{2-n}\int_{B_r} f'(u)\,dx <\eps. 
\end{equation}
Fix $r\in(0,r_0)$. For the rescaled solution $u_r=u(r\,\cdot)$ we have $-\Delta u_r=r^2f(u_r)$, and we see that
\[
\int_{B_{1/2}} (r^2 f')\bigl(u_r\bigr)\,dx
= r^{2-n}\int_{B_{r/2}} f'(u)\,dx
\le r^{2-n}\int_{B_r} f'(u)\,dx
<\eps.
\]
By \Cref{rmk:markov}, choosing $\eps$ small enough guarantees that $(u_r,r^2f)\in\sol(B_1)$ satisfies \eqref{eq:epsass} with $\lambda=1$. Hence \Cref{lem:decay} yields, for a fixed $\alpha\in(0,1)$,
\[
\osc_{\theta}(u_r)\le \theta^\alpha \osc_1(u_r),
\]
which rescales to
\[
\osc_{\theta r}(u)\le \theta^\alpha \osc_r(u).
\]
Because the smallness assumption \eqref{eq:kl} holds for \emph{all} radii in $(0,r_0)$, we can iterate
and obtain
\[
\osc_{\theta^{j+1}r}(u)\le \theta^\alpha \osc_{\theta^j r}(u)\qquad\forall\,j\ge 0.
\]

Now take $M$ as in \Cref{cor:Qdecay} (with $\alpha=\tfrac12$ and $\lambda=1$). As in the proof of
\Cref{cor:Qdecay}, the oscillation decay implies
\[
Q^M_{\theta s}(u)\le Q^M_s(u)\qquad\text{for }s\in\{r,\theta r,\theta^2 r,\dots\}.
\]
Therefore, for all $N\ge 0$,
\[
(\theta^N r)^2\, f'\bigl(Q^M_{\theta^N r}(u)\bigr)
\le \theta^{2N} r^2\, f'\bigl(Q^M_{r}(u)\bigr).
\]
Since $T_f=+\infty$, the number $f'(Q^M_r(u))$ is finite. Choose $N$ so large that
$\theta^{2N} r^2 f'(Q^M_r(u))<1$. Then
\[
(\theta^N r)^2 f'\bigl(Q^M_{\theta^N r}(u)\bigr)<1,
\]
so \Cref{cor:Qdecay} applies at scale $\theta^N r$ and yields $0\in\reg(u)$.
\end{proof}

\subsection{Proofs of \texorpdfstring{\Cref{thm:C1alpha}}{} and \texorpdfstring{\Cref{cor:assmems}}{}}

We now discuss the ``singular'' case $T_f<+\infty$. The key challenge now is that at a singular point
$u$ may still remain bounded (even $C^{1,\alpha}$), so smallness of the scale-invariant mass of $f'(u)$ 
does not force regularity; instead, we first obtain a quantitative decay for $T_f-u$.

\begin{lemma}\label{lem:decayC1}
For every $\sigma>0$ there are $\eps,\rho \in (0,1)$ with the following property.
If $(u,f)\in\sol(B_1)$, $0\in\sing(u)$, and
\begin{equation}\label{eq:assC1decay}
r^{2-n}\int_{B_r} f'(u)\,dx<\eps \quad \text{ for all }r\in(0,1),
\end{equation}
then
\[
\sup_{B_\rho }(T_f- u) \le \rho^{2-\sigma}\,\sup_{B_1}(T_f-u).
\]
\end{lemma}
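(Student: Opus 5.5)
We argue by contradiction and compactness, in the spirit of \Cref{lem:decay}. Fix $\sigma>0$ and a small $\rho\in(0,1/3)$ to be chosen depending on $n,\sigma$. If the lemma failed for this $\rho$, we would find $(u_k,f_k)\in\sol(B_1)$ with $0\in\sing(u_k)$, with \eqref{eq:assC1decay} holding with $\eps_k\downarrow0$, and with
\[
\sup_{B_\rho}(T_{f_k}-u_k)>\rho^{2-\sigma}\sup_{B_1}(T_{f_k}-u_k).
\]
After translating so that $T_{f_k}=0$, set $S_k:=\sup_{B_1}(-u_k)$. We may assume $0<S_k<\infty$. Finiteness follows from interior lower bounds on superharmonic functions after shrinking the domain slightly, and positivity holds unless $u_k\equiv T_{f_k}$. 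Normalize by $v_k:=u_k/S_k$ and $g_k(s):=f_k(S_k s)/S_k$. Then $(v_k,g_k)\in\sol(B_1)$, $T_{g_k}=0$, $-1\le v_k\le0$, $v_k(0)=0$, $g_k'(v_k)=f_k'(u_k)$, and $\sup_{B_\rho}(-v_k)>\rho^{2-\sigma}$. Since $v_k$ is uniformly bounded and superharmonic, we extract $v_k\to v_*$ in $L^1_\loc$ and apply \Cref{thm:compactness}, obtaining $(v_*,g_*)\in\sol(B_1)$.

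The key step is identifying the limit. By \eqref{eq:assC1decay} at scale $1/2$ and \Cref{rmk:markov}, the measure $|\{g_k'(v_k)\ge\lambda\}\cap B_{1/2}|$ tends to zero for every $\lambda>0$. Steps 2--3 of \Cref{lem:decay} then give $g_*'(v_*)\le\lambda$ a.e. for every $\lambda$, so $g_*'(v_*)=0$ a.e. Since $g_*$ is nondecreasing and convex, $g_*$ is constant, equal to some $c\ge0$, on the essential range of $v_*$. Hence $-\Delta v_*=c$ in $B_{1/2}$ with $v_*\le0$. This uses the case $v_*$ nonconstant; if $v_*$ is constant we argue directly instead. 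We also need $c\le C(n)$, which Step 4 of \Cref{lem:decay} supplies.

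The main obstacle is passing to the limit the information $v_k(0)=0$ and $\sup_{B_\rho}(-v_k)>\rho^{2-\sigma}$. We need the convergence to be uniform near $0$ and we need $v_*(0)=0$. Uniform convergence requires equicontinuity of $v_k$ near the origin. The plan is to run the Campanato iteration from the proof of \Cref{thm:epsreg}: the smallness \eqref{eq:assC1decay} holds at every scale, so \Cref{lem:decay} with $\lambda=1$ gives $\osc_{\theta^j r}(v_k)\le\theta^{\alpha j}\osc_r(v_k)$ at every center near $0$, uniformly in $k$. This yields a uniform $C^{0,\alpha}$ bound near $0$, hence uniform convergence and $v_*(0)=0$. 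The smallness at scales around nearby centers comes from \eqref{eq:assC1decay} at the origin with doubled radius, applied at scales comparable to the distance to $0$; for smaller scales we use the upper bound $f'\le$ Hardy-type control of \Cref{rmk:L1bound}, or restrict attention to sup estimates along dyadic annuli.

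With the limit identified, we conclude as follows. The function $w:=v_*+\frac{c}{2n}|x|^2$ is harmonic in $B_{1/2}$, satisfies $w\le\frac{c}{2n}|x|^2$, and has $w(0)=0$, so $0$ is a maximum of $w$ up to quadratic error. By the mean value property, $\sup_{B_\rho}|w|\le C(n)\rho^2(1+c)$, since the gradient of $w$ at $0$ must vanish: if $\nabla w(0)\neq0$, then $w$ would be positive of linear size along a direction, contradicting the quadratic upper bound. Thus $\sup_{B_\rho}(-v_*)\le C(n)\rho^2$. The uniform convergence then gives $\rho^{2-\sigma}\le C(n)\rho^2$, which is false once $\rho$ is small depending on $n,\sigma$. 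This fixes $\rho$, after which $\eps$ comes from the contradiction argument.
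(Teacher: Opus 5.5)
\textbf{There is a genuine gap.} It is the claimed uniform $C^{0,\alpha}$ bound for $v_k$ ``at every center near $0$''. You rely on it both for $v_*(0)=0$ and for passing $\sup_{B_\rho}(-v_k)>\rho^{2-\sigma}$ to the limit. Hypothesis \eqref{eq:assC1decay} only controls balls centered at the origin. For a center $y\neq0$, the inclusion $B_r(y)\subset B_{|y|+r}$ only gives
$r^{2-n}\int_{B_r(y)}f_k'(u_k)\le\eps_k\big((|y|+r)/r\big)^{n-2}$, and this is not small once $r\lesssim\eps_k^{1/(n-2)}|y|$.

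At those smaller scales you propose to use the upper bound of \Cref{rmk:L1bound}. That bound is $C(n)$, not small, so \Cref{lem:decay} cannot be iterated there. In fact the universal bound alone yields no uniform modulus of continuity in this class: stable singular MEMS-type profiles $T_f-u=c|x|^{2/(p+1)}$ in high dimension have H\"older exponent $2/(p+1)\to0$ as $p\to\infty$. ``Sup estimates along dyadic annuli'' does not supply a replacement argument. So equicontinuity near $0$, and with it uniform convergence, is unjustified. As a result, the final inequality $\rho^{2-\sigma}\le C(n)\rho^2$ does not follow as written.

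You do not actually need uniform convergence; the paper's proof closes the argument as follows.
\begin{itemize}
\item \emph{Value at the origin.} Run the Campanato iteration only at the origin, where smallness holds at every scale. Together with $v_k(0)=0$ this gives $\fint_{B_r}v_k\ge -Cr^\alpha$ uniformly in $k$. This bound passes to the $L^1_\loc$ limit, and since $v_*$ is continuous (also in the constant case) it yields $v_*(0)=0$.
\item \emph{Supremum.} Use that $-v_k$ is nonnegative and subharmonic, because $\Delta(T_{f_k}-u_k)=f_k(u_k)\ge0$. For $x\in B_\rho$ the mean value inequality gives $-v_k(x)\le\fint_{B_\rho(x)}(-v_k)\le 2^n\fint_{B_{2\rho}}(-v_k)$. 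The right-hand side converges to $2^n\fint_{B_{2\rho}}(-v_*)\le C\rho^2$.
\end{itemize}
This one-sided control is exactly what the paper uses.

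The rest of your proposal is fine:
\begin{itemize}
\item the identification $g_*'(v_*)=0$ a.e., which is stronger than the paper's $g_*'(v_*)\le1$ and valid since $\eps_k\to0$;
\item the equation $-\Delta v_*=c\le C(n)$ in $B_{1/2}$;
\item the vanishing of $\nabla\bigl(v_*+\frac{c}{2n}|x|^2\bigr)(0)$.
\end{itemize}
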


\begin{proof}
Assume by contradiction that there exist $(u_k,f_k)\in\sol(B_1)$ and $\eps_k\downarrow 0$ such that
\[
r^{2-n}\int_{B_r} f_k'(u_k)\,dx<\eps_k \quad \forall\,r\in(0,1),
\]
but for some $\rho\in(0,1)$ (to be chosen depending only on $n,\sigma$) one has
\begin{equation}\label{eq:contr}
\sup_{B_\rho}(T_{f_k}-u_k) > \rho^{2-\sigma}\,\sup_{B_1}(T_{f_k}-u_k).
\end{equation}
Set $A_k:=\sup_{B_1}(T_{f_k}-u_k)>0$ and define the normalized functions
\[
v_k(x):=\frac{u_k(x)-T_{f_k}}{A_k}\in[-1,0],\qquad w_k:=-v_k=\frac{T_{f_k}-u_k}{A_k}\in[0,1].
\]
Then $w_k$ is nonnegative and subharmonic (since $\Delta(T_{f_k}-u_k)=f_k(u_k)\ge 0$), hence it satisfies the
mean value inequality.

Also $(v_k,g_k)\in\sol(B_1)$ with
\[
g_k(s):=\frac{f_k(T_{f_k}+A_k s)}{A_k}\in\NN.
\]
Indeed $-\Delta v_k = g_k(v_k)$ and $g_k'(v_k)=f_k'(u_k)$ a.e.  Rescaling, for each $r\in(0,1)$,
$((v_k)_r,r^2 g_k)\in\sol(B_1)$ and
\[
\int_{B_{1/2}} (r^2 g_k')\bigl((v_k)_r\bigr)\,dx
=
r^{2-n}\int_{B_{r/2}} f_k'(u_k)\,dx
\le r^{2-n}\int_{B_r} f_k'(u_k)\,dx
<\eps_k.
\]
Fix $\alpha=\tfrac12$ and apply \Cref{rmk:markov} and \Cref{lem:decay} (with $\lambda=1$) for $k$ large.
We obtain, uniformly in $k$,
\[
\osc_{\theta r}(v_k)\le \theta^\alpha \osc_r(v_k)\qquad\forall\,r\in(0,1).
\]
In particular, since $v_k(0)=0$ (because $0\in\sing(u_k)$), a standard Campanato argument yields
\begin{equation}\label{eq:89clean}
\fint_{B_r} v_k \ge -C r^\alpha \qquad \forall\,r\in(0,1),
\end{equation}
for some $C=C(n,\alpha)$ independent of $k$.

Since $v_k$ are uniformly bounded in $L^\infty(B_1)$ and superharmonic, after extracting a subsequence
we get $v_k\to v_*$ in $L^1_{\loc}(B_1)$ and $(v_*,g_*)\in\sol(B_1)$ for some $g_*\in\NN$.
As in the proof of \Cref{lem:decay}, from the smallness assumptions we also get
\[
g_*'(v_*(x))\le 1\quad\text{for a.e.\ }x\in B_{1/2}.
\]
Hence $g_*(v_*)$ has at most linear growth in $v_*$, and elliptic regularity gives
\[
\|v_*\|_{C^{2,\alpha}(B_{1/2})}\le C(n).
\]
Passing to the limit in \eqref{eq:89clean} yields
\[
\fint_{B_r} v_* \ge -C r^\alpha \qquad \forall\,r\in(0,1).
\]
Since $v_*(0)=0$ and $v_*\le 0$, we deduce that $0$ is a maximum point for $v_*$, hence $\nabla v_*(0)=0$.
Therefore, by Taylor expansion,
\[
0\le -v_*(x)\le C(n)\,|x|^2\qquad\text{for }|x|\le \tfrac12.
\]
Hence, since $-v_k$ is nonnegative and subharmonic, the mean value inequality and the $L^1_{\rm loc}$ convergence of $v_k$ to $v_*$ give
\[
\sup_{B_\rho} |v_k|=\sup_{B_\rho} -v_k \le C(n)\fint_{B_{2\rho}} -v_k
= C(n)\fint_{B_{2\rho}} -v_* + o_k(1)
\le C(n)\rho^2 + o_k(1),
\]
as $k\to+\infty$. 
Recalling \eqref{eq:contr}, this proves
\[
\rho^{2-\sigma}\le C(n)\rho^2 + o_k(1),
\]
and contradiction 
provided we first choose $\rho$ so small that $C(n)\rho^\sigma\le \tfrac12$ and then take $k$ large.
This proves the lemma.
\end{proof}

We turn to the proof of our $\eps$-regularity result for singular nonlinearities, i.e., $T_f<+\infty$.

\begin{proof}[Proof of \Cref{thm:C1alpha}]
Let $\eps$ be the constant given by \Cref{lem:decayC1}.
By the assumption
\[
\limsup_{r\to 0} r^{2-n}\int_{B_r(x_0)} f'(u)\,dx < \eps,
\]
we may translate so that $x_0=0$, and (by the definition of $\limsup$) find $r_*>0$ such that
\begin{equation}\label{eq:65clean}
s^{2-n}\int_{B_s} f'(u)\,dx\le\eps\qquad\forall\,s\in(0,r_*).
\end{equation}
If $0\in\reg(u)$ we are done. Otherwise, if $0\in\sing(u)$, \eqref{eq:65clean} guarantees that,
for every $R\in(0,r_*)$, we can apply \Cref{lem:decayC1} to the rescaled pair
$(u_R,R^2 f)\in\sol(B_1)$.\footnote{Indeed, for every $r\in(0,1)$,
\[
r^{2-n}\int_{B_r} (R^2 f')\bigl(u_R\bigr)\,dx
=
(rR)^{2-n}\int_{B_{rR}} f'(u)\,dx
\le \eps.
\]
} Thus, setting
\[
S(R):=\sup_{B_R}(T_f-u),
\]
we obtain
\[
S(\rho R)\le \rho^{2-\sigma}S(R)\qquad\forall\,R\in(0,r_*).
\]
Iterating this inequality gives
\[
S(\rho^j r_*)\le \rho^{j(2-\sigma)}S(r_*)\qquad\forall\,j\ge0.
\]
For every $s\in(0,r_*)$, choose $j\ge0$ such that $\rho^{j+1}r_*<s\le \rho^jr_*$. Since $S$ is nondecreasing,
\[
S(s)\le S(\rho^j r_*)\le \rho^{-(2-\sigma)}S(r_*)\Big(\frac{s}{r_*}\Big)^{2-\sigma}.
\]
This proves \eqref{eq:alternative}.
\end{proof}
Now we show that $C^{2-\sigma}$ contact of $u$ with the blow-up level $T_f$ with $\sigma$ sufficiently small is in contradiction with inverse-power blow-up rate of $f$.
\begin{proof}[Proof of \Cref{cor:assmems}]
Without loss of generality, $x_0=0$ and $T_f=1$.
Choose \(0<\sigma<2p/(1+p)\), and let \(\eps=\eps(n,\sigma)\) be the constant from \Cref{thm:C1alpha}.
The blow up rate of $f$ (i.e., assumption \eqref{eq:assMEMS}) implies\footnote{Indeed, if by contradiction $\limsup_{t\uparrow 1 }(1-t)^{1+p}f'(t)=0,$ for every $\eta>0$ there would exist $\delta>0$ such that $(1-t)^{1+p}f'(t) \leq \eta$ for $t \in (1-\delta,1)$, and consequently
	$$
	f(t)=f(1-\delta)+\int_{1-\delta}^t f'(s)\,ds \leq f(1-\delta)+\eta \int_{1-\delta}^t (1-s)^{-(1+p)}\,ds\leq f(1-\delta)+\frac{\eta}{p} (1-t)^{-p} \qquad \forall\,t \in (1-\delta,1).
	$$
	This implies $\limsup_{t\uparrow 1 }\, (1-t)^{p}f(t)\leq \frac{\eta}{p}$, which contradicts \eqref{eq:assMEMS} since $\eta$ is arbitrary.} a stronger blow-up rate for $f'$
\[
\limsup_{t\uparrow 1 }(1-t)^{1+p}f'(t)>0.
\]
Hence there exist $c>0$ and a sequence $t_j\downarrow 0$ such that
\[
	f'(1-t_j)\ge c \,t_j^{-1-p}.
\]
If \eqref{eq:alternative} held, there would exist $C>0$ such that
\[
1-u(x)\le C|x|^{2-\sigma}\quad\text{for }|x|\text{ small}.
\]
Set $r_j:=(t_j/C)^{1/(2-\sigma)}\downarrow 0$. Then for all $|x|\le r_j$,
\[
u(x)\ge 1-Cr_j^{2-\sigma}=1-t_j,
\]
and by monotonicity of $f'$,
\[
f'(u(x))\ge f'(1-t_j)\ge c\, t_j^{-1-p}
= c\,C^{-1-p}\,r_j^{-(2-\sigma)(1+p)}.
\]
On the other hand, testing stability with a cutoff in $B_{r_j}$ yields the universal bound
\[
C(n)\ge r_j^{2-n}\int_{B_{r_j}} f'(u)\,dx
\ge c\,C^{-1-p}\,r_j^{2-(2-\sigma)(1+p)}.
\]
For our choice of $\sigma$, 
\[
2-(2-\sigma)(1+p)=\sigma(1+p)-2p<0,
\]
so the right-hand side diverges as $j\to+\infty$, leading to a contradiction. Therefore \eqref{eq:alternative} cannot hold
for such a $\sigma$, meaning that necessarily $0\in\reg(u)$.
\end{proof}

\section{\texorpdfstring{$L^q$}{} estimates on \texorpdfstring{$f'(u)$}{}}

Our goal is to prove \Cref{thm:dimbound} and its consequences. This requires several intermediate steps, which are presented in the following sections.

\subsection{A general a priori inequality}
The starting point is a standard, but extremely flexible, device: we test the equation with a nonlinear function of $u$, and we test stability with another nonlinear function of $u$. By choosing these two functions in a compatible way, the terms containing $|\nabla u|^2$ cancel, yielding an inequality where the main quantities are functions of $u$ that will later need to be optimized.

\begin{lemma}\label{lem:handg}
Let $(u,f)\in \sol(B_1)$ with $\sing(u)=\emptyset,$ and let $h$, $g\in W^{1,1}_\loc((-\infty,T_f))$. Assume that
\begin{equation}\label{eq:handg}
    g'(t) \ge h'(t)^2 \quad \text{for a.e. } t<T_f.
\end{equation}
Then, for all test functions $\zeta\in C^{1,1}_c(B_1)$, it holds
\begin{align}\label{eq:stabilityhg}
    \int_{B_1} \Big(h(u)^2 f'(u) - g(u) f(u)\Big)\zeta^2
    \le 2\int_{B_1} G(u)|\nabla \zeta|^2\,dx
    + \int_{B_1} [2G(u)-h(u)^2]\,\zeta\,\lap\zeta\,dx,
\end{align}
where $G$ is any primitive of $g$, i.e. $G'(t)=g(t)$ a.e.
\end{lemma}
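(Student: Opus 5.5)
The plan is to combine two tests, exactly as announced before the lemma. We test the stability inequality in \eqref{eq:stability} with $\xi=h(u)\zeta$, and the equation with $g(u)\zeta^2$. The integrals of $|\nabla u|^2$ then appear with weights $h'(u)^2$ and $g'(u)$, and \eqref{eq:handg} lets the second absorb the first.

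\textbf{Regularity and approximation.} Since $\sing(u)=\emptyset$, \Cref{thm:open} gives that $u$ is continuous and $C^{2,\alpha}$ in $B_1$ with $u<T_f$ everywhere. Hence on $\operatorname{supp}\zeta$ the values of $u$ lie in a compact interval $I\subset(-\infty,T_f)$, on which $f(u)$ and $f'(u)$ are bounded. To avoid chain-rule issues for $W^{1,1}$ functions composed with $u$, we first assume $h,g$ smooth. For general $h,g$ we mollify, setting $h_\delta=h*\rho_\delta$ and $g_\delta=g*\rho_\delta$ on a slightly smaller interval containing $I$. By Jensen's inequality,
\[
g_\delta'=(g')*\rho_\delta\ \ge\ (h'^2)*\rho_\delta\ \ge\ (h'*\rho_\delta)^2=(h_\delta')^2,
\]
so the structural condition \eqref{eq:handg} survives. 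Moreover, since $W^{1,1}$ functions of one variable are absolutely continuous, $h_\delta\to h$, $g_\delta\to g$ and $G_\delta\to G$ uniformly on $I$ (normalizing the primitives at a fixed point). Every term in \eqref{eq:stabilityhg} involves only $h,g,G$ (not derivatives) multiplied by bounded functions, so the inequality passes to the limit.

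\textbf{Computation for smooth $h,g$.} Stability with $\xi=h(u)\zeta$ gives
\[
\int f'(u)h(u)^2\zeta^2\le\int h'(u)^2|\nabla u|^2\zeta^2+\int \nabla\big(h(u)^2\big)\cdot\zeta\nabla\zeta+\int h(u)^2|\nabla\zeta|^2 .
\]
In the middle term we write $\zeta\nabla\zeta=\tfrac12\nabla(\zeta^2)$ and integrate by parts, using $\zeta\in C^{1,1}_c$ so that $\Delta(\zeta^2)=2|\nabla\zeta|^2+2\zeta\Delta\zeta\in L^\infty$. This makes the middle term equal to $-\int h^2(|\nabla\zeta|^2+\zeta\Delta\zeta)$, and the $|\nabla\zeta|^2$ terms cancel:
\[
\int f'(u)h(u)^2\zeta^2\le\int h'(u)^2|\nabla u|^2\zeta^2-\int h(u)^2\zeta\Delta\zeta .
\]
Next we test the equation with $g(u)\zeta^2$, which is admissible by density since $u\in C^2$. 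This gives
\[
\int g'(u)|\nabla u|^2\zeta^2+\int\nabla\big(G(u)\big)\cdot\nabla(\zeta^2)=\int f(u)g(u)\zeta^2 .
\]
Integrating the second term by parts turns it into $-2\int G(u)(|\nabla\zeta|^2+\zeta\Delta\zeta)$. Hence
\[
\int g'(u)|\nabla u|^2\zeta^2=\int f(u)g(u)\zeta^2+2\int G(u)\big(|\nabla\zeta|^2+\zeta\Delta\zeta\big).
\]
Finally, \eqref{eq:handg} bounds $\int h'(u)^2|\nabla u|^2\zeta^2$ by $\int g'(u)|\nabla u|^2\zeta^2$. Substituting this into the stability estimate yields exactly \eqref{eq:stabilityhg}.

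\textbf{Main obstacle.} The computation itself is routine. The only delicate point is justifying the chain rule and the admissibility of the nonlinear test functions when $h,g$ are merely $W^{1,1}_\loc$. This is handled by the regularity from \Cref{thm:open}, which confines the range of $u$ to a compact subset of $(-\infty,T_f)$, together with the Jensen-preserving mollification above.
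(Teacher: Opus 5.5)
Your proposal is correct and follows essentially the same route as the paper. You test the equation with $g(u)\zeta^2$ and stability with $h(u)\zeta$, integrate by parts using $\tfrac12\Delta(\zeta^2)=|\nabla\zeta|^2+\zeta\Delta\zeta$, absorb via $g'\ge (h')^2$, and treat the $W^{1,1}_{\loc}$ case by Jensen-preserving mollification, all as in the paper; your use of \Cref{thm:open} to confine $u(\operatorname{supp}\zeta)$ to a compact subinterval of $(-\infty,T_f)$ usefully makes explicit the limit passage that the paper only asserts.
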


\begin{proof}
We first assume $h$ and $g$ are $C^1$. Testing the weak form of $-\lap u=f(u)$ with $\xi=g(u)\zeta^2$ gives
\[
\int_{B_1} g'(u)\zeta^2|\nabla u|^2\,dx
=\int_{B_1} f(u)g(u)\zeta^2\,dx + \int_{B_1} G(u)\,\lap(\zeta^2)\,dx.
\]
On the other hand, testing stability with $\xi=\zeta\,h(u)$ yields
\begin{align*}
\int_{B_1} f'(u)\,\zeta^2 h(u)^2\,dx &\leq \int_{B_1} |\nabla(\zeta h(u))|^2\,dx\\
&=\int_{B_1}\Big(\zeta^2 h'(u)^2|\nabla u|^2 + h(u)^2|\nabla\zeta|^2
+2\zeta h(u)h'(u)\nabla u\cdot\nabla\zeta\Big)\,dx\\
&=\int_{B_1}\Big(\zeta^2 h'(u)^2|\nabla u|^2 + h(u)^2|\nabla\zeta|^2
+\tfrac12 \nabla(h(u)^2)\cdot\nabla(\zeta^2)\Big)\,dx
\end{align*}
Since $\frac12 \Delta(\zeta^2)=|\nabla \zeta|^2+\zeta\,\lap\zeta$, integrating by parts we obtain
\[
\int_{B_1} f'(u)\zeta^2 h(u)^2\,dx \leq \int_{B_1}\zeta^2 h'(u)^2|\nabla u|^2\,dx
-\int_{B_1} h(u)^2\,\zeta\,\lap\zeta\,dx.
\]
Finally, using \eqref{eq:handg} we can replace $h'(u)^2$ by $g'(u)$ in the right-hand side and combine with the first identity, giving the claimed inequality.

If $h,g\in W^{1,1}_\loc$ only, we regularize them by convolution. Condition \eqref{eq:handg} is preserved under convolution thanks to Jensen's inequality, and the inequality passes to the limit.
\end{proof}

\smallskip

From now on 
\[
\text{we fix $f\in\NN$ satisfying \eqref{eq:loggrowth} and \eqref{eq:weakCR}.}
\]We also assume that $f\in C^2((m_f,T_f))$ and that $f(t)\to+\infty$ as $t\uparrow T_f$.
Up to slightly reducing $\gamma_\circ$, we may assume that there exists $\kappa\ge m_f$ with $\kappa<T_f$ such that
\begin{equation}\label{eq:loggrowth1}
    f'(t) \ge (\log(1+t))^{2+\gamma_\circ}\qquad \forall\,t\in(\kappa,T_f),
\end{equation}
and (enlarging $\kappa$ if needed) that 
\begin{equation}\label{eq:loggrowth1-2}
   f(\kappa)\geq 1\quad \text{ and }\quad f'(\kappa)\geq 1.
\end{equation}
Since \(f'(t)\to+\infty\) as \(t\uparrow T_f\), we also fix this same \(\kappa\) so large that
\begin{equation}\label{eq:loggrowth1-3}
|\log f'(t)|^{2+\gamma_\circ}\le f'(t)\qquad\forall\,t\in(\kappa,T_f).
\end{equation}

\smallskip

Our next goal is to construct the ``best possible'' test functions $h$ and $g$ in terms of $f$ and $f'$, and apply them in \Cref{lem:handg}. To facilitate the analysis, the following section introduces notation that simplifies the computations.

\subsection{A convenient tool: the functions \texorpdfstring{${H_\nu}$}{}}
To streamline the forthcoming computations, we use the notation below.

\begin{definition}\label{def:Hnu}
Let $\kappa$ be as in \eqref{eq:loggrowth1}-\eqref{eq:loggrowth1-2}, and let $\nu$ be a locally integrable function on $[\kappa,T_f)$. We define
\begin{equation*}
H_\nu(t):=\exp\bigg\{\int_{\kappa}^{\max\{t,\kappa\}}  \frac{f'(s)}{f(s)}\, \nu(s)\,ds\bigg\}\qquad\text{for all }t<T_f.
\end{equation*}
\end{definition}

The following facts are easy to check (and we will use them repeatedly):
\begin{itemize}
    \item $H_\nu$ is locally absolutely continuous, positive, and $H_\nu(t)=1$ for $t\le\kappa$.
    \item For $t\in(\kappa,T_f)$,
    \[
        f(t)=f(\kappa)H_1(t),
        \qquad
        f'(t)=f'(\kappa)H_\tau(t),
    \]
    where $\tau:=\dfrac{f''f}{(f')^2}$.
    \item The derivative satisfies
    \begin{equation}\label{eq:Hidentities}
	    H'_\nu = \nu\,\frac{f'(\kappa)}{f(\kappa)}\, H_{\nu+\tau-1}
    \quad\text{a.e. on }(\kappa,T_f).
    \end{equation}
    \item If $\nu\le\tilde\nu$ a.e., then $H_\nu\le H_{\tilde\nu}$ pointwise.
\end{itemize}

\subsection{The main a priori estimate}
A careful optimization of \Cref{lem:handg} suggests choosing $h$ close to\footnote{This heuristic comes from balancing the inequality in \Cref{lem:handg} with the constraint \(g'\ge (h')^2\), and is borne out by the model cases where the sharp integrability exponent is recovered.}
\[
H_{1+\sqrt{\tau}}.
\]
To be more precise, with the exact choice $h=H_{1+\sqrt{\tau}}$, the quantity LHS in \eqref{eq:LHSsmallerRHS} below
would cancel, leaving no coercivity. We will therefore keep a small margin (replacing $\sqrt{\tau}$ by
$\mu\sqrt{\tau}$ with $\mu<1$) so that the left-hand side becomes strictly positive and as coercive as possible,
while still allowing us to choose $g$ so that the structural relation between $g$ and $h$ (namely $g'\ge (h')^2$)
is satisfied. We implement this idea in the next proposition.

Set
\[
\varepsilon_\omega:=\sup\{s\in(0,1):\omega(s)\le 1/2\}>0.
\]

\begin{proposition}\label{prop:aprioribound}
Let $f\in\NN$ satisfy \eqref{eq:weakCR} and \eqref{eq:loggrowth1}--\eqref{eq:loggrowth1-3}, and assume that $f\in C^2((\kappa,T_f))$. Set
\[
\tau(t):=\dfrac{f''(t)f(t)}{f'(t)^2}.
\]
Then, for every $\mu \in [1/2,1)$ and every $(u,f)\in\sol(B_1)$ with $\sing(u)=\emptyset$, one has
\begin{equation}\label{eq:higherintegrability}
    \int_{B_{1/2}\cap\{u>\kappa\}} f'(u)\, f(u)^2\,H_{2\mu \sqrt{\tau}}(u)\,dx
    \le C + C\int_{B_1} |u|\,dx.
\end{equation}
Here, $H_{2\mu \sqrt{\tau}}$ is defined as in \Cref{def:Hnu}, and the constant $C$ depends only on upper bounds for
\begin{equation}\label{eq:Cdependence}
    n,\quad \frac{1}{1-\mu},\quad f(\kappa),\quad f'(\kappa),\quad \frac{1}{\gamma_\circ},\quad \frac{1}{\varepsilon_\omega}.
    \end{equation}
\end{proposition}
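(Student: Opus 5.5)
The plan is to apply \Cref{lem:handg} with an explicit pair $(h,g)$ built from the functions $H_\nu$, and a cutoff $\zeta\in C^\infty_c(B_1)$ with $\zeta\equiv1$ on $B_{1/2}$. The choice of $h$ should make the left-hand side coercive, comparable to $f'(u)f(u)^2H_{2\mu\sqrt\tau}(u)$. The choice of $g$ should satisfy $g'\ge (h')^2$ while keeping $g f$ a small fraction of $h^2f'$. For $t>\kappa$ I would take
\[
h(t):=H_{1+\mu\sqrt{\tau}}(t)-1 ,
\]
so that $h(\kappa)=0$, and extend $h\equiv0$ for $t\le\kappa$. Then $h\in W^{1,1}_\loc$, and by \eqref{eq:Hidentities}
\[
h'=(1+\mu\sqrt\tau)\tfrac{f'(\kappa)}{f(\kappa)}H_{\mu\sqrt\tau+\tau}.
\]
Since $H_{2\nu}=H_\nu^2$, the natural candidate for $g$ is a primitive-type function with
\[
g'=(h')^2=(1+\mu\sqrt\tau)^2\tfrac{f'(\kappa)^2}{f(\kappa)^2}H_{2\mu\sqrt\tau+2\tau},
\]
with $g\equiv 0$ for $t\le\kappa$. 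Up to constants, $h^2f'\approx H_{2+2\mu\sqrt\tau+\tau}$. Since $\tau=1\cdot$ log-derivative relation gives $f'=f'(\kappa)H_\tau$, this says $h^2 f'\approx f'f^2H_{2\mu\sqrt\tau}$, exactly the integrand in \eqref{eq:higherintegrability}.

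\textbf{Key step: comparing $gf$ with $h^2f'$.} I would integrate $g'$ using the identity $(H_\nu)'=\nu\frac{f'(\kappa)}{f(\kappa)}H_{\nu+\tau-1}$ in reverse. Heuristically
\[
g(t)\approx \frac{(1+\mu\sqrt\tau)^2}{2\mu\sqrt\tau+\tau+1}\cdot\frac{f'(\kappa)}{f(\kappa)}H_{2\mu\sqrt\tau+\tau+1},
\]
so $gf\approx \frac{(1+\mu\sqrt\tau)^2}{1+2\mu\sqrt\tau+\tau}\,h^2f'$. The ratio is $<1$ precisely because $\mu<1$: indeed $(1+\mu s)^2\le (1-c(1-\mu))(1+2\mu s+s^2)$. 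This yields
\[
h^2f'-gf\ge c(1-\mu)h^2f'.
\]
The rigorous version cannot rely on $\tau$ being slowly varying. I would instead write $g(t)=\int_\kappa^t(h')^2$ and bound it pointwise. The quantity $(h')^2$ equals $c\,(1+\mu\sqrt\tau)^2\,\frac{f'}{f}\cdot\frac{H_{2\mu\sqrt\tau+\tau+1}}{H_{\ldots}}$-type expressions. By Cauchy--Schwarz,
\[
(1+\mu\sqrt\tau)^2\le (1+\mu)\bigl(1+\mu\tau\bigr)\le\ldots,
\]
and one compares with the exact derivative of $H_{2\mu\sqrt\tau+\tau+1}$-type quantities, modulo $\tau$-terms. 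This is where I expect the main obstacle. The troublesome contributions come from the part of the integral over $s\ll t$ and from the regime where $f(s)/f(t)$ is not small. Assumption \eqref{eq:weakCR} enters here, with $\varepsilon_\omega$: on $\{f(s)\le\varepsilon_\omega f(t)\}$ we have $f'(s)\le\frac12 f'(t)$, which lets me absorb the tail of the integral defining $g$. On the complementary range, $f(s)\sim f(t)$, so the $H$-factors are comparable. \eqref{eq:loggrowth1}--\eqref{eq:loggrowth1-3} will be used to control the error terms involving $\log f'$, coming from the "$-1$" in $h$ and from lower-order terms, uniformly in terms of $\gamma_\circ$.

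\textbf{Right-hand side and conclusion.} With coercivity in hand, \eqref{eq:stabilityhg} gives
\[
c(1-\mu)\int h(u)^2f'(u)\zeta^2\le C\int_{B_1}\bigl(G(u)+h(u)^2\bigr)\bigl(|\nabla\zeta|^2+|\Delta\zeta|\bigr).
\]
On $\{u\le\kappa\}$ the right-hand integrand vanishes, so it remains to control $G(u)+h(u)^2$ on $\{u>\kappa\}$ away from $B_{1/2}$. I would do this by replacing $\zeta$ with $\zeta^m$ for large $m$ and using Young's inequality. Since $G\approx gf/f'$ and $h^2\approx h^2f'/f'$, these are lower order relative to the left side, namely by a factor $1/f'(u)$. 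The growth \eqref{eq:loggrowth1-3} should make $G(u)\le \delta\,h(u)^2f'(u)+C_\delta\bigl(1+\text{an }L^1\text{-controlled quantity}\bigr)$. Failing a direct absorption, I would iterate on a ladder of intermediate exponents, first obtaining $\int f(u)$ (bounded by $C\|u\|_{L^1}$ via testing the equation) and $\int f'(u)f(u)$ (as in \Cref{rmk:f'bound}), then climbing to the full weight. Finally, on $\{u>\kappa'\}$ for slightly larger $\kappa'$, $h\ge\frac12 H_{1+\mu\sqrt\tau}$, and the region $\kappa<u\le\kappa'$ is trivially bounded. This yields \eqref{eq:higherintegrability} with $C$ depending only on the quantities in \eqref{eq:Cdependence}.
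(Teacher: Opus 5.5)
\textbf{Main gap: coercivity of the left-hand side.} The gap is at the step you yourself flag as the main obstacle, and it is not merely technical. Your heuristic that $gf/(h^2f')\approx\nu^2/(2\nu+\tau-1)$, with $\nu=1+\mu\sqrt\tau$, stays below $1$ ``precisely because $\mu<1$'' is false. Indeed $1-\nu^2/(2\nu+\tau-1)=(1-\mu^2)\tau/(1+2\mu\sqrt\tau+\tau)$, which vanishes as $\tau\to0$, so your inequality $(1+\mu s)^2\le(1-c(1-\mu))(1+2\mu s+s^2)$ already fails at $s=0$. Concretely, on any interval where $f$ is affine ($\tau=0$, $\nu=1$), $h^2f'/f-g$ does not increase while $h^2f'/f$ grows like $f$. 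So coercivity must come from $f''$, and its constant must depend on $\varepsilon_\omega$, not on $\mu$ alone.

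The missing ingredient is the exact identity (for $h=H_\nu$)
\[
\frac{d}{dt}\Big(\frac{h^2f'}{f}\Big)-(h')^2=\frac{f'(\kappa)^2}{f(\kappa)^2}\,(1-\mu^2)\,\tau\,H_{2\nu+2\tau-2},
\]
which gives $h^2f'/f-g\ge\frac{f'(\kappa)}{f(\kappa)}+\frac{1-\mu}{f(\kappa)^2}\int_\kappa^t f''fH_{2\mu\sqrt\tau}$. The paper writes this integrand as $\Gamma_f'\,H_{2\mu\sqrt\tau-1-\tau}/(f(\kappa)f'(\kappa))$, with $\Gamma_f:=\int_\kappa^t f''f'f^2$, and integrates by parts. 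Since $2\mu\sqrt\tau\le1+\tau$, the weight $H_{2\mu\sqrt\tau-1-\tau}$ is nonincreasing, so the integral is at least $\Gamma_f(t)H_{2\mu\sqrt\tau-1-\tau}(t)$; everything is now evaluated at the endpoint. Only at this point does \eqref{eq:weakCR} enter, to prove $\Gamma_f(t)\ge c_\circ f'(t)^2f(t)^2-C$. One writes $2\Gamma_f=[(f')^2f^2]_\kappa^t-\int_\kappa^t(f')^2(f^2)'$ and splits at $s$ with $f(s)=\varepsilon_\circ f(t)$, where $\omega(\varepsilon_\circ)\le\frac12$; this gives $f'(s)\le\frac12f'(t)$ and a saving of $\frac34\varepsilon_\circ^2$.

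Your use of \eqref{eq:weakCR} (the tail of $g$ is small, and on the remaining range ``the $H$-factors are comparable'') does not produce this. On the remaining range, $(h')^2$ nearly coincides with $\frac{d}{dt}(h^2f'/f)$ wherever $\tau$ is small, so there is no gap to collect. Moreover, the $H$-factors need not be comparable there: $H_{2\mu\sqrt\tau}(t)/H_{2\mu\sqrt\tau}(s)=\exp\{2\mu\int_s^t\sqrt\tau\,f'/f\}$ can be huge when $\tau$ is large, even if $f(s)\sim f(t)$. The monotone weight is exactly what avoids this.

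\textbf{Secondary issue: the right-hand side.} Your treatment of the right-hand side is also incomplete as stated. The difficulty is not a pointwise comparison in $u$ but the mismatch between the weights $|\nabla\zeta|^2+|\zeta\lap\zeta|$ and $\zeta^2$. With $\zeta^m$ and Young's inequality you need $h(u)^2\zeta^{2m-2}\le\delta\,h(u)^2f'(u)\zeta^{2m}+C$. On $\{f'(u)\zeta^2<1/\delta\}$ this requires a polynomial bound $h^2\lesssim(f')^{m-1}$, which does not follow from \eqref{eq:loggrowth1}--\eqref{eq:loggrowth1-3}; those only give $|\log h^2|^{2+\gamma_\circ}\lesssim f'$. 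You can extract the polynomial bound from \eqref{eq:weakCR} instead. Iterating $f'(s)\le\frac12f'(t)$ along $f(s)=\varepsilon_\circ f(t)$ gives $f\lesssim(f')^{N}$ with $N=N(\varepsilon_\omega)$, hence $h^2\le H_{3+\tau}\lesssim(f')^{3N+1}$. The paper takes a different route: it uses the exponentially flat cutoff of \Cref{lem:logp}, for which $|\nabla\zeta|^2+|\zeta\lap\zeta|\lesssim\zeta^2(1+|\log\zeta|^{2q})$. With that cutoff the logarithmic comparison $|\log H_{2\nu}|^{2+\gamma_\circ}\le Cf'$ suffices for reabsorption.
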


\begin{proof}
Throughout the proof, $C$ denotes a large constant depending only on the quantities in \eqref{eq:Cdependence}, while $c$ denotes a small positive constant depending on the same list.

\smallskip
\noindent\textbf{Step 1: Defining $\bm h$ and $\bm g$.}
Fix $\mu \in [1/2,1)$. In the rest of the proof, we specialize the index in \(H_\nu\) to the function
\[
\nu(t):=1+\mu\sqrt{\tau(t)}\ge 1\qquad\text{for }t\in(\kappa,T_f).
\]
Then, a direct computation shows that
\begin{equation}\label{eq:nuproperty}
2\nu+\tau-1-\nu^2
= (1-\mu^2)\tau
\ge (1-\mu)\,\tau.
\end{equation}
Now define
\[
h(t):=H_\nu(t),
\qquad
g(t):=\int_{-\infty}^t (h'(s))^2\,ds,
\qquad
G(t):=\int_{-\infty}^t g(s)\,ds.
\]
Then $h,g\in W^{1,1}_\loc((-\infty,T_f))$ and, by construction, $g'= (h')^2$ a.e.
Applying \Cref{lem:handg} with this choice gives, for any $\zeta\in C^2_c(B_1)$,
\begin{equation}\label{eq:LHSsmallerRHS}
\underbrace{\int_{B_1} (h^2 f' - g f)(u)\,\zeta^2\,dx}_{=: \mathrm{LHS}}
\le
\underbrace{\int_{B_1} G(u)\,\lap(\zeta^2)\,dx - \int_{B_1} h(u)^2\,\zeta\,\lap\zeta\,dx}_{=:\mathrm{RHS}}.
\end{equation}

\smallskip
\noindent\textbf{Step 2: Lower bound for LHS.}
For $t\in(\kappa,T_f)$, using \eqref{eq:Hidentities} and the fact that $h'=0$ on $(-\infty,\kappa]$, we can write
\begin{align*}
h(t)^2 \frac{f'(t)}{f(t)} - g(t)
&= \frac{f'(\kappa)}{f(\kappa)}H_{2\nu+\tau-1}(t)
 - \int_{\kappa}^{t} (h'(s))^2\,ds\\
&= \frac{f'(\kappa)}{f(\kappa)}
 +\frac{f'(\kappa)^2}{f(\kappa)^2}\int_{\kappa}^{t} \big(2\nu+\tau-1-\nu^2\big)H_{2\nu+2\tau-2}\,ds.
\end{align*}
Using \eqref{eq:nuproperty} and that $H_{2\nu+2\tau-2}\ge0$, this yields
\begin{align*}
h(t)^2 \frac{f'(t)}{f(t)} - g(t)
&\ge \frac{f'(\kappa)}{f(\kappa)}
 + (1-\mu)\frac{f'(\kappa)^2}{f(\kappa)^2}\int_{\kappa}^{t} \tau\,H_{2\nu+2\tau-2}\,ds\\
&= \frac{f'(\kappa)}{f(\kappa)}
 + \frac{1-\mu}{f(\kappa)^2}\int_{\kappa}^{t} f''\,f\,H_{2\mu\sqrt{\tau}}\,ds.
\end{align*}
(The last identity corresponds to the equality $\tau\,\frac{f'}{f}=\frac{f''}{f'}$ encoded in the $H$-notation.)

Note that, for $t\le\kappa$, we trivially have $(h^2 f' - g f)(t)\ge f'(t)\ge 0$.
Now define, for $t<T_f$,
\begin{equation}\label{eq:Gamma}
\Gamma_f(t):=\int_{\kappa}^{t} f''\,f'\,f^2\,ds,
\end{equation}
so $\Gamma_f$ is nondecreasing.
Using the identity $f(\kappa)f'(\kappa)\, f''\,f\,H_{2\mu\sqrt{\tau}}
= f''\,f'\,f^2\,H_{2\mu\sqrt{\tau}-1-\tau}$, by an integration by parts we obtain
\begin{align}
\label{eq:boundGamma}
f(\kappa)f'(\kappa)\int_{\kappa}^{t} f''\,f\,H_{2\mu\sqrt{\tau}}\,ds
&=\int_{\kappa}^{t} f''\,f'\,f^2\,H_{2\mu\sqrt{\tau}-1-\tau}\,ds\notag\\
&=\int_{\kappa}^{t} \Gamma_f'\,H_{2\mu\sqrt{\tau}-1-\tau}\,ds\notag\\
&= \Gamma_f(t)\,H_{2\mu\sqrt{\tau}-1-\tau}(t)
 - \int_{\kappa}^{t} \Gamma_f\, H_{2\mu\sqrt{\tau}-1-\tau}'\,ds.
\end{align}
	Since $2\mu\sqrt{\tau}-1-\tau\le 0$, the function
$H_{2\mu\sqrt{\tau}-1-\tau}$ is nonincreasing; hence the last term in \eqref{eq:boundGamma} is nonnegative, and therefore
\[
f(\kappa)f'(\kappa)\int_{\kappa}^{t} f''\,f\,H_{2\mu\sqrt{\tau}}\,ds
\ge \Gamma_f(t)\,H_{2\mu\sqrt{\tau}-1-\tau}(t).
\]
At this point we need a lower bound for $\Gamma_f$, and this is where \eqref{eq:weakCR} enters.

\medskip
\noindent\emph{Claim.} There exists $c_\circ>0$ such that
\begin{equation}\label{eq:Gamma-lower}
\Gamma_f(t) \ge c_\circ\, f'(t)^2 f(t)^2 - f'(\kappa)^2 f(\kappa)^2
\qquad\forall\,t\in(\kappa,T_f).
\end{equation}

\smallskip
Assuming the claim for the moment (whose proof will be given in Step 5 below), we can conclude Step 2.
Indeed, combining all previous inequalities and recalling that
$f'f^2H_{2\mu\sqrt{\tau}-1-\tau} = f(\kappa)f'(\kappa)\, f\,H_{2\mu\sqrt{\tau}}$,
we obtain for $t\in(\kappa,T_f)$
\[
h(t)^2 f'(t) - g(t)f(t)
\ge c\, f'(t)f(t)^2 H_{2\mu\sqrt{\tau}}(t) - C f(t).
\]
Evaluating at $t=u(x)$ and integrating against $\zeta^2$ yields
\begin{equation}\label{eq:LHS-lower}
\mathrm{LHS} \ge
c\int_{\{u\ge\kappa\}} f'(u)f(u)^2 H_{2\mu\sqrt{\tau}}(u)\,\zeta^2\,dx
 - C\int_{B_1} f(u)\,\zeta^2\,dx.
\end{equation}

\smallskip
\noindent\textbf{Step 3: Upper bound for RHS.}
We first prove that
\begin{equation}\label{eq:Gbound}
1+2G(t)\le h(t)^2\qquad\forall\,t\in [\kappa,T_f).
\end{equation}
To prove this fact, we first observe that since $G''=g'=(h')^2$, using \eqref{eq:Hidentities} one obtains 
\[
G''(t)=\frac{f'(\kappa)^2}{f(\kappa)^2}\nu(t)^2\,H_{2\nu+2\tau-2}(t)\qquad\forall\,t \in [\kappa,T_f).
\]
Thus, thanks to \eqref{eq:Hidentities}, \eqref{eq:nuproperty}, and the fact that $H_{(1-\mu)\tau}$ is nondecreasing (since $\mu<1$), for $t\in[\kappa,T_f)$ we have
\begin{align*}
	    G'(t)& =\frac{f'(\kappa)^2}{f(\kappa)^2}\int^t_{\kappa}\nu^2 H_{2\nu+2\tau-2}= \frac{f'(\kappa)^2}{f(\kappa)^2}\int^t_{\kappa}(2\nu+\mu^2\tau-1)H_{2\nu+2\tau-2}\\
    &\le \frac{f'(\kappa)^2}{f(\kappa)^2}\int^t_{\kappa}(2\nu+\mu\tau-1)H_{2\nu+(1+\mu)\tau-2} H_{(1-\mu)\tau}= \frac{f'(\kappa)}{f(\kappa)}\int^t_{\kappa}(H_{2\nu+\mu\tau-1})' H_{(1-\mu)\tau}\\
    &\le \frac{f'(\kappa)}{f(\kappa)}H_{(1-\mu)\tau}(t) [H_{2\nu+\mu\tau-1}(t)-1]\le \frac{f'(\kappa)}{f(\kappa)} [H_{2\nu+\tau-1}(t)-1].
    \end{align*}
    On the other hand, since $\nu\ge1$, 
    \begin{align*}
        (h^2)'(t) = H_{2\nu}'(t)&=\frac{f'(\kappa)}{f(\kappa)}{(2\nu)}H_{2\nu+\tau-1}>2\frac{f'(\kappa)}{f(\kappa)} [H_{2\nu+\tau-1}(t)-1]\ge 2G'(t).
    \end{align*}
   This proves that $h^2-2G$ is  nondecreasing on $(\kappa,T_f)$. Hence, since $G(\kappa)=0$ and $h(\kappa)=1$, we conclude that \eqref{eq:Gbound} holds.

Now rewrite RHS as
\begin{align*}
\mathrm{RHS}
&=\int G(u)\lap(\zeta^2)\,dx-\int h(u)^2 \zeta\lap\zeta\,dx\\
&=2\int G(u)|\nabla\zeta|^2\,dx+\int (2G(u)-h(u)^2)\,\zeta\lap\zeta\,dx\\
&\le \int h(u)^2\Big(|\nabla\zeta|^2+|\zeta\lap\zeta|\Big)\,dx,
\end{align*}
where we used the bounds
$$
0\le 2G \leq h^2-1,\qquad |2G-h^2|=h^2-2G\le h^2
$$
which follows from \eqref{eq:Gbound}.

Finally, applying \Cref{lem:logp} to $w=h(u)^2=H_{2\nu}(u)$ with $p=\frac{2+\gamma_\circ}2>1$, we can choose $\zeta\in C^2_c(B_1)$ with $\zeta\ge \chi_{B_{1/2}}$ such that, for any $\eps>0$,
\begin{equation}\label{eq:RHS-est}
\mathrm{RHS}
\le \eps\int_{\{u\ge\kappa\}} H_{2\nu}(u)\,|\log H_{2\nu}(u)|^{2+\gamma_\circ}\,\zeta^2\,dx + C_{\gamma_\circ,\eps}.
\end{equation}

\smallskip
\noindent\textbf{Step 4: Reabsorption and conclusion.}
We now compare $|\log H_{2\nu}(t)|^{2+\gamma}$ with $f'(t)$ for $t\in(\kappa,T_f)$.
Using $\mu\le1$ and the elementary inequality $2+2\sqrt{a}\le 3+a$ for $a\ge0$, we have $2\nu\le 3+\tau$ a.e.
Hence $H_{2\nu}\le H_{3+\tau}$ and
\[
\log H_{2\nu}(t)\le \log H_{3+\tau}(t)
= C + 3\log f(t) + \log f'(t),
\]
for a constant $C$ depending only on $f(\kappa),f'(\kappa)$.
By convexity, $f(t)\le f(\kappa)+f'(t)(t-\kappa)$, so for $t$ large,
\[
\log f(t)\le C+\log f'(t)+\log(1+t),
\]
and therefore
\[
|\log H_{2\nu}(t)|^{2+\gamma_\circ}
\le C\Big(1+|\log f'(t)|^{2+\gamma_\circ}+|\log(1+t)|^{2+\gamma_\circ}\Big).
\]
By \eqref{eq:loggrowth1} and \eqref{eq:loggrowth1-3},
\[
|\log H_{2\nu}(t)|^{2+\gamma_\circ}\le C f'(t)\qquad \forall\,t\in(\kappa,T_f).
\]
Plugging this into \eqref{eq:RHS-est} and then combining \eqref{eq:LHSsmallerRHS} with \eqref{eq:LHS-lower} yields
\[
c\int_{\{u\ge\kappa\}} f'(u)H_{2\nu}(u)\zeta^2\,dx - C\int f(u)\zeta^2
\le C\eps\int_{\{u\ge\kappa\}} f'(u)H_{2\nu}(u)\zeta^2\,dx + C_{\gamma,\eps}.
\]
Choosing $\eps$ so that $C\eps\le c/2$, we reabsorb the main term and obtain \eqref{eq:higherintegrability} (recall that, up to the harmless constant \(f(\kappa)^{-2}\), one has $H_{2\nu}=f^2 H_{2\mu\sqrt{\tau}}$ by the $H$-identities).

\smallskip
\noindent\textbf{Step 5: Proof of claim \eqref{eq:Gamma-lower}.}
We use the identity
\[
2\Gamma_f(t)=\int_{\kappa}^{t}\big[(f')^2\big]' f^2
= f'(t)^2 f(t)^2 - f'(\kappa)^2 f(\kappa)^2 - \int_{\kappa}^{t} (f')^2 (f^2)'.
\]
Fix $t \in (\kappa,T_f)$ and let $s\in(\kappa,t)$ to be specified. Since $f'$ is nondecreasing,
\begin{align*}
\int_{\kappa}^{t} (f')^2 (f^2)'
&=\int_{\kappa}^{s}(f')^2 (f^2)'+\int_{s}^{t}(f')^2 (f^2)'\\
&\le f'(s)^2\int_{\kappa}^{s}(f^2)' + f'(t)^2\int_{s}^{t}(f^2)'\\
&\le f'(s)^2 f(s)^2 + f'(t)^2\big(f(t)^2-f(s)^2\big)\\
&= f'(t)^2 f(t)^2\bigg(1-\frac{f(s)^2}{f(t)^2}+\frac{f'(s)^2 f(s)^2}{f'(t)^2 f(t)^2}\bigg).
\end{align*}
Choose $\varepsilon_\circ\in(0,\varepsilon_\omega)$ such that $\omega(\varepsilon_\circ)\le 1/2$.
If \(\varepsilon_\circ f(t)\le f(\kappa)\), then
\[
\Gamma_f(t)\ge f(\kappa)^2\int_\kappa^t f''f'\,ds
=\frac{f(\kappa)^2}{2}\big(f'(t)^2-f'(\kappa)^2\big),
\]
and \eqref{eq:Gamma-lower} follows, with a smaller \(c_\circ\), from \(f(t)\le f(\kappa)/\varepsilon_\circ\).
We may therefore assume that \(\varepsilon_\circ f(t)>f(\kappa)\). By continuity and monotonicity of $f$ on $(\kappa,T_f)$, we can choose $s=s(t)\in(\kappa,t)$ such that
\[
\frac{f(s)}{f(t)}=\varepsilon_\circ.
\]
Then, by \eqref{eq:weakCR},
\[
\frac{f'(s)}{f'(t)}\le \omega\Big(\frac{f(s)}{f(t)}\Big)=\omega(\varepsilon_\circ)\le\frac12.
\]
Therefore
\[
1-\frac{f(s)^2}{f(t)^2}+\frac{f'(s)^2 f(s)^2}{f'(t)^2 f(t)^2}
\le 1-\varepsilon_\circ^2+\frac14\varepsilon_\circ^2
=1-\frac34\,\varepsilon_\circ^2.
\]
Hence
\[
\int_{\kappa}^{t}(f')^2 (f^2)'
\le \Big(1-\tfrac34\varepsilon_\circ^2\Big) f'(t)^2 f(t)^2.
\]
Plugging back gives
\[
2\Gamma_f(t)\ge \frac34\varepsilon_\circ^2\, f'(t)^2 f(t)^2 - f'(\kappa)^2 f(\kappa)^2,
\]
so \eqref{eq:Gamma-lower} holds with $c_\circ=\frac38\,\varepsilon_\circ^2$.
\end{proof}

\begin{remark}\label{rmk:lapfp}
Let $f\in \NN\cap C^2$ and assume that
\[
\tau(t):=\frac{f''(t)f(t)}{f'(t)^2}\ge \gamma_-\quad\text{ for all }\kappa<t<T_f,
\]
for some constant $\gamma_-\ge 0,$
so that
\[
H_{2\mu\sqrt{\tau}}(u)\ge H_{2\mu\sqrt{\gamma_-}}(u)
=\bigg(\frac{f(u)}{f(\kappa)}\bigg)^{2\mu\sqrt{\gamma_-}}.
\]
Since $\mu$ can be chosen arbitrarily close to $1$, \eqref{eq:higherintegrability} yields
\[
\int_{B_{1/2}} f'(u)\,f(u)^p \le C + C\int_{B_1}|u|<+\infty
\qquad\text{for all }1\le p<2+2\sqrt{\gamma_-}.
\]
Since $-\Delta u=f(u)$ and $p\ge 1$, a direct computation gives
\begin{align*}
-\Delta \big(f(u)^p\big)&= p\,f'(u)\,f(u)^p
 - p(p-1)\,f(u)^{p-2}\,f'(u)^2\,|\nabla u|^2
 - p\,f(u)^{p-1}\,f''(u)\,|\nabla u|^2\le p\,f'(u)\,f(u)^p,
\end{align*}
Since $f'(u)\,f(u)^p\in L^1(B_{1/2})$,
this implies that $\Delta\big(f(u)^p\big)$ is a measure with finite mass in $B_{2/5}$, and by 
interior regularity estimates we deduce that $f(u)\in L^r(B_{1/3})$ for all
\[
r<\frac{2n}{n-2}(1+\sqrt{\gamma_-}).
\]
Thus, recalling that $f(u)=-\Delta u$, elliptic regularity provides an H\"older bound on $u$ inside $B_{1/4}$ as soon as
\begin{equation}\label{eq:dimboundRegular}
    n<6+4\sqrt{\gamma_-}.
\end{equation}
In the singular case $T_f<+\infty$, H\"older continuity is not enough to exclude the presence of singular points (cf \Cref{rmk:salt}), so one argues as in \cite{cabrebruera}: the function $F(u):=\int^u_0f(s)\, ds$ solves $-\Delta(F(u))\le f(u)^2$. If we additionally assume the non-integrability condition $F(T_f)=+\infty$, this gives that no singular solution can exist if $r>n$, that is
\begin{equation}\label{eq:dimboundMEMS}
    n<4+2\sqrt{\gamma_-}.
\end{equation}
\end{remark}

\subsection{Approximation by classical solutions}
We next recall a known sub/supersolution procedure that will be used to approximate possibly singular stable solutions for a certain $f\in \NN$ by bounded stable solutions for the slightly smaller nonlinearity $f_\eps:=(1-\eps)f$. While more direct approximation schemes exist (cf. \cite[Proposition 4.2]{actadim9}), the procedure presented here is necessary in order not to disrupt our asymptotic quantity, namely keeping $q_{f_\eps}=q_f$. 

\begin{lemma}\label{lem:subsuper}
Let $f\in \NN$ and $V\in H^1(B_1)$ with $f(V)\in L^1(B_1)$.
Assume that $V$ is a weak supersolution:
\[
\int_{B_1}\nabla V\cdot\nabla \xi \,dx \ge \int_{B_1} f(V)\,\xi\,dx
\qquad \forall\,\xi\in C^1_c(B_1),\ \xi\ge0.
\]
Then there exists $w\in H^1(B_1)$ such that $w=V$ on $\partial B_1$, $w\le V$ a.e.\ in $B_1$, and
\[
\int_{B_1}\nabla w\cdot\nabla \xi\,dx = \int_{B_1} f(w)\,\xi\,dx
\qquad \forall\,\xi\in C^1_c(B_1).
\]
Moreover,
\[
\|\nabla w\|_{L^2(B_1)} \le 4\, \|\nabla V\|_{L^2(B_1)}.
\]
\end{lemma}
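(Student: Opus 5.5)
The plan is to construct $w$ as the \emph{minimal} solution sitting below $V$, by a monotone scheme that never uses smoothness of $f$. Start the iteration from the harmonic function $w_0$ with $w_0=V$ on $\partial B_1$. Since $V$ is superharmonic ($f\ge0$), $w_0\le V$ by the weak maximum principle. We then look for a subsolution lying below $V$ and iterate upward.

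\textbf{Step 1: the iteration.} Define $w_{k+1}\in H^1(B_1)$ as the solution of
\[
-\Delta w_{k+1}=f(w_k),\qquad w_{k+1}-V\in H^1_0(B_1).
\]
By induction we show
\[
w_0\le w_1\le\dots\le w_k\le V
\]
and $0\le f(w_k)\le f(V)\in L^1$. Indeed, $-\Delta(V-w_{k+1})\ge f(V)-f(w_k)\ge0$ with zero boundary data, and $-\Delta(w_{k+1}-w_k)=f(w_k)-f(w_{k-1})\ge0$. Since the right-hand sides are only in $L^1$, I would solve each problem in the duality/Stampacchia sense, or first truncate $f$ by $\min(f,N)$. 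The comparison arguments are then applied to the truncated problems, where they are standard. Because $w_k\le V<T_f$ a.e. (as $f(V)\in L^1$ forces $V\le T_f$, with $f$ finite a.e. there), $f(w_k)$ is finite.

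\textbf{Step 2: passing to the limit.} Set $w:=\lim w_k$; it is monotone, and $w_0\le w\le V$. Since $f$ is lower semicontinuous and nondecreasing, $f(w_k)\uparrow f(w)$ pointwise, with $f(w)\le f(V)\in L^1$. Dominated convergence then gives $f(w_k)\to f(w)$ in $L^1$, hence the equation for $w$ in the distributional sense, with $w=V$ on $\partial B_1$.

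\textbf{Step 3: the energy bound (main obstacle).} This is the delicate point, because $f(w)\in L^1$ does not by itself give $H^1$ control. I would test the equation for $w_{k+1}$ with $w_{k+1}-V\le0$, which lies in $H^1_0$:
\[
\int\nabla w_{k+1}\cdot\nabla(w_{k+1}-V)=\int f(w_k)(w_{k+1}-V)\le0 .
\]
This yields $\|\nabla w_{k+1}\|^2\le\int\nabla w_{k+1}\cdot\nabla V$, hence $\|\nabla w_{k+1}\|_{L^2}\le\|\nabla V\|_{L^2}$. Making this test legitimate when $f(w_k)$ is merely $L^1$ requires care. I would test with truncations $\max(w_{k+1}-V,-M)$ and let $M\to\infty$ using monotone convergence, since the integrand has a sign. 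Alternatively, I would first run the whole scheme with $f_N:=\min(f,N)$ and pass $N\to\infty$ at the end.

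\textbf{Step 4: conclusion.} The uniform $H^1$ bound gives $w_k\rightharpoonup w$ in $H^1$, so $w\in H^1$ with the trace $V$ and $\|\nabla w\|\le\|\nabla V\|\le4\|\nabla V\|$. The weak formulation passes to the limit by Step 2, which completes the construction. The generous constant $4$ leaves room in case a cruder estimate is needed, for instance if one splits $w=(w-w_0)+w_0$ and uses $\|\nabla w_0\|\le\|\nabla V\|$.
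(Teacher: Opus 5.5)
Your proposal is correct and follows the same route as the paper: a monotone iteration started from the harmonic replacement $w_0$, comparison giving $w_0\le w_k\le w_{k+1}\le V$, a uniform energy bound, and passage to the monotone and weak $H^1$ limit. Your truncation by $\min(f,N)$ also handles the existence of $H^1$ iterates when the data is merely $L^1$, a point the paper takes for granted. The only real difference is the energy test. You test with $w_{k+1}-V\le 0$ and use only the sign of $f(w_k)$, which gives the sharper bound $\|\nabla w\|_{L^2(B_1)}\le\|\nabla V\|_{L^2(B_1)}$. The paper instead tests with $w_k-w_0\ge 0$ and uses $w_k\le V$, which leads to the cruder constant $4$.
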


\begin{proof}
Let $w_0$ be the harmonic replacement of $V$ in $B_1$ (i.e.\ $-\lap w_0=0$ in $B_1$ and $w_0=V$ on $\partial B_1$).
Define iteratively $w_{k+1}$ as the weak solution of
\[
-\lap w_{k+1}=f(w_k)\quad\text{in }B_1,\qquad w_{k+1}=V\ \text{on }\partial B_1.
\]
By comparison and monotonicity of $f$ we have
\[
w_0\le w_k\le w_{k+1}\le V\qquad\text{a.e. in }B_1.
\]
Testing the equation for $w_k$ with $w_k-w_0\ge0$ and using $w_k\le V$ yields an energy bound
\[
\|\nabla w_k\|_{L^2(B_1)} \le 2\|\nabla V\|_{L^2(B_1)}+2\|\nabla w_0\|_{L^2(B_1)}
\le 4\|\nabla V\|_{L^2(B_1)}.
\]
Since the sequence $\{w_k\}_{k\geq 0}$ is monotone increasing and bounded above by $V$, it converges a.e.\ to a limit $w\le V$; the uniform $H^1$ bound yields weak $H^1$ convergence along subsequences, and passing to the limit gives the desired weak solution with the stated energy bound.
\end{proof}

\begin{proposition}\label{prop:approximation}
Let $(u,f)\in \sol(B_1)$ with $\sing(u)\cap B_{1/2}\neq\emptyset$ and let $\eps>0$.
Then there exists $u_\eps\in H^1(B_{1/2})$ such that
\[
(u_\eps,(1-\eps)f)\in \sol(B_{1/2})
\qquad\text{and}\qquad
\sing(u_\eps)=\emptyset.
\]
Moreover, as $\eps\downarrow 0$, $u_\eps\uparrow u$ pointwise in $B_{1/2}$ and $u_\eps\to u$ strongly in $H^1(B_{1/2})$.
\end{proposition}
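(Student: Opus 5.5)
We use $u$ itself as a supersolution for the smaller nonlinearity $(1-\eps)f$. Then \Cref{lem:subsuper} produces a solution below $u$, and stability is inherited from $u$ with a margin of $\eps f'$. That margin is what rules out singular points.

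\textbf{Step 1: supersolution and construction.} Since $-\Delta u=f(u)\ge(1-\eps)f(u)$ weakly and $u\in H^1(B_{1/2})$, $u$ is a weak supersolution for $(1-\eps)f$ in $B_{1/2}$. We have $f(u)\in L^1(B_{1/2})$ because interior balls are compactly contained in $B_1$, after slightly enlarging the radius. \Cref{lem:subsuper} then gives $w=:u_\eps\le u$ with $u_\eps=u$ on $\partial B_{1/2}$ and $-\Delta u_\eps=(1-\eps)f(u_\eps)$. Moreover, $(1-\eps)f'(u_\eps)\le (1-\eps)f'(u)\le f'(u)$ because $f'$ is nondecreasing, so the stability inequality of $u$ transfers. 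Hence $(u_\eps,(1-\eps)f)\in\sol(B_{1/2})$.

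\textbf{Step 2: no singular points.} We expect this to be the main step. Because $f'(u_\eps)\le f'(u)$, stability of $u$ gives the improved inequality
\[
\int (1-\eps)f'(u_\eps)\xi^2 +\eps\int f'(u_\eps)\xi^2\le\int|\nabla\xi|^2 .
\]
So $u_\eps$ is strictly stable, with a margin $\eps\int f'(u_\eps)\xi^2$. We plan to combine this with the $\eps$-regularity results of Section~3.

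First we rescale around a point $x_0$, setting $v=u_\eps(x_0+r\,\cdot)$ with nonlinearity $r^2(1-\eps)f$. Then we test the extra margin against the equation, in the spirit of \Cref{lem:handg} with $h=1$, $g=0$. This shows that the scale-invariant mass $r^{2-n}\int_{B_r(x_0)}f'(u_\eps)$ is bounded by $C\eps^{-1}$ times something tending to zero as $r\to0$.

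An alternative route uses the monotone iteration from \Cref{lem:subsuper} directly. The iteration starts from the harmonic replacement, which is bounded in the interior. If one can show each iterate stays below some level $<T_f$, a standard implicit-function/continuation argument in $\lambda\in[0,1-\eps]$ keeps the minimal solution bounded, exactly as in \Cref{thm:lambda-star}(i). This works because $1-\eps<1$ and $u$ is a supersolution at parameter $1$, so $(1-\eps)$ lies strictly below the extremal parameter. We would try this continuation argument first: for the Dirichlet problem in $B_{1/2}$ with boundary datum $u|_{\partial B_{1/2}}$, the minimal solutions $u_\lambda$ for $\lambda<1$ are classical. The point is that the first eigenvalue of $-\Delta-\lambda f'(u_\lambda)$ is positive, by comparison with the stability of $u$ at $\lambda=1$, so the branch continues through bounded solutions and cannot blow up before $\lambda=1$. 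Identifying $u_\eps$ with this minimal solution gives $\sing(u_\eps)=\emptyset$.

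\textbf{Step 3: convergence.} Uniqueness of the minimal solution makes $u_\eps$ nondecreasing as $\eps\downarrow0$, and $u_\eps\le u$. The $H^1$ bound from \Cref{lem:subsuper} gives weak compactness, so $u_\eps\uparrow \bar u\le u$ with $(\bar u,f)\in\sol(B_{1/2})$ by \Cref{thm:compactness}, and the convergence is strong in $W^{1,2}_\loc$. To identify $\bar u=u$, we use uniqueness of stable solutions below a supersolution with the same boundary data. Concretely, $u-\bar u\ge0$ vanishes on the boundary, and convexity of $f$ together with stability of $u$ forces $u=\bar u$ (the classical argument behind \Cref{thm:lambda-star}(iii)). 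Upgrading to strong $H^1(B_{1/2})$ convergence up to the boundary uses the matching boundary values and energy convergence, which follows by testing the equations with $u_\eps-u$.
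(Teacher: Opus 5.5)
\textbf{There is a genuine gap in Step 2, which is the whole content of the proposition.} Steps 1 and 3 are essentially fine. In Step 1, your transfer of stability via $(1-\eps)f'(u_\eps)\le f'(u)$ is a cleaner justification than minimality, and it would work for the paper's construction too, since there also $u_\eps\le u$.

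Both of your routes for Step 2 rest only on the stability margin $\int f'(u_\eps)\xi^2\le\int|\nabla\xi|^2$, i.e.\ on positivity of the first eigenvalue of $-\lap-(1-\eps)f'(u_\eps)$. This cannot rule out singular points. Take $n\ge 11$, $0<\eps\le 1/9$, $f(t)=e^t$, and $u_\eps(x)=\log\frac{2(n-2)}{(1-\eps)|x|^2}$. Then $-\lap u_\eps=(1-\eps)e^{u_\eps}$ and $f'(u_\eps)=\frac{2(n-2)}{(1-\eps)|x|^2}$. This lies below the Hardy weight $\frac{(n-2)^2}{4|x|^2}$, so $u_\eps$ has exactly your margin. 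Yet it is singular at $0$, and $r^{2-n}\int_{B_r}f'(u_\eps)$ is a positive constant.

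\emph{Route 1.} Testing with $h=1$, $g=0$ only reproduces the cutoff bound $C(n)$. No argument using just the margin can make the scale-invariant mass tend to zero.

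\emph{Route 2.} The continuation claim fails for the same reason. Consider the Gelfand problem $-\lap u=\lambda e^u$ in the unit ball, $n\ge 11$. Along the whole branch, $\lambda e^{u_\lambda}\le 2(n-2)|x|^{-2}$, so the first eigenvalue stays uniformly positive. The branch nevertheless blows up at $\lambda^\star=2(n-2)$. Positivity of the spectrum gives local continuation, not boundedness. In addition, your Dirichlet datum $u|_{\partial B_{1/2}}$ need not be bounded or below $T_f$, so the zero-data framework of \Cref{thm:lambda-star} is not directly available.

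\textbf{Missing idea: the concave truncation.} What excludes blow-up strictly before the parameter where a (possibly singular) supersolution exists is the Brezis--Cazenave--Martel--Ramiandrisoa construction, which is what the paper uses. Do not use $u$ itself as the barrier; use $\Phi_\eps(u)$, where $h(t)=\int_{t_0}^t ds/f(s)$ and $\Phi_\eps=h^{-1}((1-\eps)h)$ above $t_0$ (the identity below $t_0$).
\begin{itemize}
\item From $\Phi_\eps' f=(1-\eps)f(\Phi_\eps)$ and $f'(\Phi_\eps(t))\le f'(t)$ one gets $\Phi_\eps''\le 0$.
\item Hence $-\lap\Phi_\eps(u)\ge(1-\eps)f(\Phi_\eps(u))$, so $\Phi_\eps(u)$ is a supersolution for $(1-\eps)f$.
\item If $\int^{T_f}ds/f<\infty$, then $\Phi_\eps(u)\le h^{-1}((1-\eps)h(T_f))<T_f$ on all of $B_{1/2}$, up to the boundary.
\item \Cref{lem:subsuper} with $V=\Phi_\eps(u)$ then yields a solution trapped below a level strictly less than $T_f$, so $\sing(u_\eps)=\emptyset$.
\item If $\int^{T_f}ds/f=+\infty$, one iterates the truncation, as the paper indicates.
\end{itemize}
Your phrase ``if one can show each iterate stays below some level $<T_f$'' is exactly what this barrier supplies. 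With $u$ as the barrier, the iterates are only bounded by $u$, which attains $T_f$.

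\textbf{A smaller point in Step 3.} The Martel-type argument with $\phi=u-\bar u$ yields only equality in the stability inequality. That is, $f$ is affine on each interval $[\bar u(x),u(x)]$; it does not give $\bar u=u$ directly. You still have to exclude this degenerate case using $\sing(u)\cap B_{1/2}\neq\emptyset$, as the paper does.
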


\begin{proof}
Since $\sing(u)\cap B_{1/2}\neq\emptyset$, elliptic regularity forces $\lim_{t\uparrow T_f} f'(t)=+\infty$.
Fix $t_0<T_f$ such that $f(t_0)>0$ and $f'(t_0)>1$, and define the strictly increasing $C^1$ function
\[
h(t):=\int_{t_0}^{t}\frac{ds}{f(s)},\qquad t<T_f.
\]
For $\eps>0$ define $\Phi_\eps$ by
\[
\Phi_\eps(t)=
\begin{cases}
t, & t\le t_0,\\[2mm]
h^{-1}\big((1-\eps)h(t)\big), & t_0<t<T_f.
\end{cases}
\]
For $t>t_0$ the identity $h(\Phi_\eps(t))=(1-\eps)h(t)$ implies
\[
\Phi_\eps'(t)\,f(t)=(1-\eps)\,f(\Phi_\eps(t)).
\]
Since $\Phi_\eps(t)\le t$ and $f'$ is increasing, we have $f'(\Phi_\eps(t))\le f'(t)$ and thus
\[
\Phi_\eps''(t)
=(1-\eps)\frac{f(\Phi_\eps(t))}{f(t)^2}\Big((1-\eps)f'(\Phi_\eps(t))-f'(t)\Big)\le 0,
\]
so $\Phi_\eps$ is concave on $(t_0,T_f)$.
Consequently,
\[
-\lap(\Phi_\eps(u))=\Phi_\eps'(u)\,f(u)-\Phi_\eps''(u)|\nabla u|^2
\ge (1-\eps)f(\Phi_\eps(u)),
\]
meaning that $\Phi_\eps(u)$ is a weak supersolution for the equation $-\lap w=(1-\eps)f(w)$.

Applying \Cref{lem:subsuper} in $B_{1/2}$ with $V=\Phi_\eps(u)$, and taking the monotone-iteration solution constructed in its proof, we obtain $u_\eps\le \Phi_\eps(u)$ in $B_{1/2}$ with
\[
-\lap u_\eps=(1-\eps)f(u_\eps)\quad\text{in }B_{1/2}.
\]
By the minimality of this solution we also get stability of \(u_\eps\), hence \((u_\eps,(1-\eps)f)\in\sol(B_{1/2})\).
If $h(T_f)=\int_{t_0}^{T_f}\frac{ds}{f(s)}<+\infty$, then $\Phi_\eps(t)\to \Phi_\eps(T_f):=h^{-1}((1-\eps)h(T_f))<T_f$ as $t\uparrow T_f$, hence $\Phi_\eps(u)\le \Phi_\eps(T_f)<T_f$ and thus $\sing(u_\eps)=\emptyset$.

If instead $h(T_f)=+\infty$, one can iterate this concave truncation construction a finite number of times (exactly as in \cite[Theorem 3.2.1]{dupaigne}) to obtain a bounded supersolution, and then repeat the argument to conclude again that $\sing(u_\eps)=\emptyset$.

Finally, since $\Phi_\eps(t)\uparrow t$ pointwise as $\eps\downarrow 0$, comparison and the minimal choice of \(u_\eps\) give that $u_\eps$ converges monotonically a.e. to a function $\tilde u \leq u$.
If by contradiction $\tilde u\neq u$, the argument of \cite[Proposition 4.2, Step 3]{actadim9} implies that $f$ must be affine on the family of intervals $[\tilde u(x),u(x)]$ as $x$ varies in $B_1$, forcing $f(u)$ to be affine in $u$ on a nontrivial interval of the form $[t_0,T_f)$; however, this would imply that $u$ is regular, contradicting $\sing(u)\cap B_{1/2}\neq\emptyset$. This proves that $\tilde u=u$.

Finally, the strong convergence of $u_\eps$ to $u$ in $H^1(B_{1/2})$ follows from \Cref{thm:compactness}.
\end{proof}

\subsection{Proofs of \texorpdfstring{\Cref{thm:dimbound}}{} and \texorpdfstring{\Cref{cor:CRvsGamma}}{}}

\begin{proof}[Proof of \texorpdfstring{\Cref{thm:dimbound}}{}]
By a standard covering argument, we can assume that $\Omega=B_1$ and we only need to prove a universal bound around the origin.

Assume $q_f>1$ (otherwise the statement is already contained in the stability $L^1$ bound), and 
fix $q<q_f$.
By definition of $q_f$, choose  $\mu\in[1/2,1)$ close enough to $1$ so that
\[
\frac{q-1}{2}<\mu\liminf_{t\uparrow T_f}\frac{\log f(t)+\int_{m_f}^{t}\sqrt{f''(s)/f(s)}\,ds}{\log f'(t)}.
\]
Since $\log f(t)\ge0$ for $t$ close to $T_f$, up to enlarging $\kappa$ we can ensure that, for all $t\in(\kappa,T_f)$,
\[
2\log f(t) + 2\mu\int_{m_f}^{t}\sqrt{\frac{f''(s)}{f(s)}}\,ds
\ge (q-1)\log f'(t),
\]
which is equivalent to
\[
f'(t)^q
\le f'(t)\,f(t)^2\,\exp\bigg\{2\mu\int_{m_f}^{t}\sqrt{\frac{f''(s)}{f(s)}}\,ds\bigg\}=C_f \, f'(t)\,f(t)^2H_{2\mu\sqrt{\tau}}(t),
\]
where
\[
C_f:=\exp\bigg\{2\mu\int_{m_f}^{\kappa}\sqrt{\frac{f''(s)}{f(s)}}\,ds\bigg\}<+\infty.
\]
We shall use this estimate after multiplying the nonlinearity by fixed positive constants. This causes no difficulty: for \(F=cf\), with \(c>0\), the quantities \(\tau\) and \(q_f\) are unchanged, while \(F'=cf'\), and the corresponding \(H_{2\mu\sqrt{\tau}}\) changes at most by a multiplicative constant after adjusting the base point \(\kappa\). One chooses \(\kappa_c\) so that \(F(\kappa_c),F'(\kappa_c)\ge1\) and the analogues of \eqref{eq:loggrowth1}--\eqref{eq:loggrowth1-3} hold for \(F\). The part of the integral where \(u\le\kappa_c\) is harmless because \(f'\) is bounded there. In the finite covers below the scaling constants \(c\) range over a fixed compact subset of \((0,+\infty)\), and in the approximation step \(c=1-\eps\in[1/2,1]\) after discarding finitely many \(\eps\)'s; hence the constants are uniform for the limiting argument.

If $\sing(u)\cap B_{1/2}=\emptyset$, we cover \(B_{1/4}\) by finitely many balls \(B_\rho(x_i)\) such that \(B_{2\rho}(x_i)\subset B_{1/2}\), and apply \Cref{prop:aprioribound} after rescaling on each \(B_{2\rho}(x_i)\). This gives
\[
\int_{B_{1/4}} f'(u)^q\,dx
\le C_f \int_{B_{1/4}} f'(u)f(u)^2H_{2\mu\sqrt{\tau}}(u)\,dx
\le C + C\int_{B_{1/2}}|u|\,dx,
\]
which proves the result.

If $\sing(u)\cap B_{1/2}\neq\emptyset$, let $u_\eps$ be given by \Cref{prop:approximation} with $f_\eps=(1-\eps)f$.
Then $(u_\eps,f_\eps)\in\sol(B_{1/2})$ and $\sing(u_\eps)=\emptyset$. 
Moreover the assumptions \eqref{eq:loggrowth} and \eqref{eq:weakCR} are uniform in $\eps$ (since they are stable under multiplying $f$ by a constant).
Applying the argument above to $(u_\eps,f_\eps)$ yields uniform $L^q$ bounds on $f'(u_\eps)$ in $B_{1/4}$.
Letting $\eps\to0$ and using Fatou's lemma gives the same bound for $f'(u)$, hence $f'(u)\in L^q(B_{1/4})$.

Finally, the Hausdorff-dimension estimate then follows from \eqref{eq:Lpbound} (i.e.\ from \Cref{thm:epsreg} and \Cref{cor:assmems} together with \Cref{lem:gmtintro}).
\end{proof}

\begin{proof}[Proof of \Cref{cor:CRvsGamma}]
Given $f\in C^2\cap\NN$, define $\tau(t):=\dfrac{f''(t)f(t)}{f'(t)^2}$ and 
\[
\gamma_-:=\liminf_{t\uparrow T_f}\tau(t),
\qquad
\gamma_+:=\limsup_{t\uparrow T_f}\tau(t).
\]
If \(\gamma_-=0\), then the integral term in \eqref{eq:q_f} is nonnegative, and the argument below with the prefactor \(1+\sqrt{\gamma_-}\) replaced by \(1\) gives the desired bound. We therefore assume \(\gamma_->0\).

Then, for every $\delta\in(0,\gamma_-)$ there exists $t_\delta<T_f$ such that
\[
\gamma_- -\delta \le \tau(t)\le \gamma_+ +\delta
\qquad\forall\,t\in(t_\delta,T_f).
\]
For $t>t_\delta$ we estimate
\[
\int_{m_f}^{t}\sqrt{\frac{f''(s)}{f(s)}}\,ds
=\int_{m_f}^{t}\sqrt{\tau(s)}\,\frac{f'(s)}{f(s)}\,ds
\ge \int_{t_\delta}^{t}\sqrt{\tau(s)}\,\frac{f'(s)}{f(s)}\,ds\geq \sqrt{\gamma_- -\delta}\,\Big(\log f(t)-\log f(t_\delta)\Big).
\]
Therefore, recalling \eqref{eq:q_f} and using that $f'(t) \to +\infty$ as $t\uparrow T_f$,
\[
q_f
\ge 1 + 2\liminf_{t\uparrow T_f}\frac{\log f(t)+\sqrt{\gamma_- -\delta}\,\log f(t)}{\log f'(t)}
= 1 + 2(1+\sqrt{\gamma_- -\delta})\,\liminf_{t\uparrow T_f}\frac{\log f(t)}{\log f'(t)}.
\]
Now, by de l'H\"opital rule for the liminf (see for instance \cite{TaylorHopital}), 
$$
\liminf_{t\uparrow T_f}\frac{\log f(t)}{\log f'(t)} \geq \liminf_{t\uparrow T_f}\frac{\big(\log f(t)\big)'}{\big(\log f'(t)\big)'}=\liminf_{t\uparrow T_f}\frac{f'(t)^2}{f''(t)f(t)}=\frac{1}{\gamma_+}.
$$
Combining these bounds and letting $\delta\downarrow 0$, we conclude the validity of \eqref{eq:correctq}.
\end{proof}
\section{The two-dimensional case}
The goal of this section is to prove \Cref{thm:C11-2D}. We recall that, in dimension $n\leq 9$, stable solutions are H\"older continuous \cite{actadim9}, so in particular $u$ and $f(u)$ are pointwise well-defined.

\subsection{A priori estimates for singular nonlinearities in all dimensions}
Our first estimate is a local $L^1$ estimate on $f'(u)f(u).$ This is based on \Cref{lem:handg} and reads as follows:
\begin{proposition}\label{prop:ff'}
    Let $(u,f)\in \sol(B_1)$ with $B_1\subset\R^n$, then
    \begin{equation}\label{eq:goal}
 \int_{B_{1/2}} f'(u)f(u)\,dx
 \le C(n)\, \|u\|_{L^\infty(B_1)}.
\end{equation}
\end{proposition}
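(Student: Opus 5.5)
We apply \Cref{lem:handg} with the simplest compatible pair: $h(t)=f(t)$ and $g(t)$ with $g'=(f')^2$. Assume first $\sing(u)=\emptyset$, and reduce to this case at the end by \Cref{prop:approximation}. This choice is bad for the gradient terms, so instead we use a linear $h$ adapted to the oscillation of $u$.

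Set $L:=\|u\|_{L^\infty(B_1)}$; we may assume $L<\infty$. Take $h(t)=t+L$, so $h'=1$ and $0\le h(u)\le 2L$ in $B_1$. Take $g(t)=t+L$ as well, so that $g'=1=(h')^2$ and $g(u)\ge 0$, with primitive $G(t)=\tfrac12(t+L)^2\le 2L^2$ on the range of $u$. Then \eqref{eq:stabilityhg} gives
\[
\int \big((u+L)^2 f'(u)-(u+L)f(u)\big)\zeta^2 \le C(n)L^2 .
\]
This is quadratic in $L$ and does not involve $f'f$, so the choice is wrong. The correct structure comes from testing stability with $\xi=\zeta\sqrt{f(u)}$-type functions, or more naturally with $h=f(u)^{1/2}$-like weights. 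Instead, I use the following direct argument.

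Test the equation with $\xi=f(u)\zeta^2$:
\[
\int f'(u)|\nabla u|^2\zeta^2+\int f(u)\nabla u\cdot\nabla\zeta^2=\int f(u)^2\zeta^2 .
\]
Then test stability with $\xi=(u+L)\zeta$-type functions to bring in $f'(u)$ against $u$. The cleanest route is \Cref{lem:handg} with $h(t)=\sqrt{?}$.

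The route I commit to is \Cref{lem:handg} with $h\equiv$ a constant multiple combined with $g=f$. Its hypothesis $g'=f'\ge h'^2=0$ holds, and $G=F$ is a primitive of $f$. This gives
\[
\int (c^2 f'(u)-f(u)^2)\zeta^2\le 2\int F(u)|\nabla\zeta|^2+\int(2F(u)-c^2)\zeta\Delta\zeta .
\]
This bounds $f'$, not $f'f$. The correct pair is therefore $h(t)=f(t)$ and $g$ with $g'=f'^2$, which is the opposite order.

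I therefore settle on the following strategy, which is the main step. Multiply the equation by $\varphi=(L-u)f'(u)\zeta^2\ge0$, or equivalently combine with the stability inequality tested against $\xi=\sqrt{L-u}\,\cdots$. Concretely, set $w:=L-u\in[0,2L]$, which is subharmonic since $\Delta w=f(u)$. Test stability with $\xi=\zeta\, w^{1/2}$ after regularization:
\[
\int f'(u)\,w\,\zeta^2\le\int|\nabla(\zeta w^{1/2})|^2\le \int \frac{|\nabla w|^2}{2w}\zeta^2+C\int w|\nabla\zeta|^2 .
\]
Separately, testing $\Delta w=f(u)$ against $\zeta^2$ bounds $\int f(u)\zeta^2\le CL$. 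Testing it against $\log w\,\zeta^2$ controls $\int|\nabla w|^2w^{-1}\zeta^2$ by $\int f(u)\,|\log w|\zeta^2$ plus $CL$.

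The hardest step is converting $\int f'(u)\,w$ into $\int f'(u)f(u)$. Near points where $f(u)$ is large, $w$ is comparable to $f(u)^{-1}$ times the local scale. I would try to obtain this via convexity, $f(u)\le f(u-s)+sf'(u)$, together with a Harnack/mean value comparison for the subharmonic $w$. This step I expect to be the real obstacle, and I would first check the radial model to calibrate the weights.
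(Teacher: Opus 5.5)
There is a genuine gap: the proposal never arrives at an argument. You cycle through choices in \Cref{lem:handg}, discard them, and commit to a route that cannot work. Testing stability with $\xi=\zeta\sqrt{w}$, $w=L-u$, at best bounds $\int f'(u)\,(L-u)\,\zeta^2$. That bound is already implied by the trivial stability estimate $\int_{B_{3/4}}f'(u)\le C(n)$, since $0\le L-u\le 2L$. Your control of $\int|\nabla w|^2w^{-1}\zeta^2$ by $\int f(u)|\log w|\zeta^2$ is also not bounded, because $|\log w|$ blows up precisely where $u$ approaches $L$, i.e.\ where $f(u)$ is largest. The conversion step you flag as the obstacle goes the wrong way. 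To pass from $\int f'(u)w$ to $\int f'(u)f(u)$ you would need $w\gtrsim f(u)$. But $w$ is small, even zero at a maximum point of $u$, exactly where $f(u)$ is large. Your own heuristic $w\sim f(u)^{-1}$ would produce $\int f'/f$, not $\int f'f$.

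The pair you wrote first and then abandoned, $h=f$ with $g'=(f')^2$, is essentially the right one, but it needs a truncation, and that is the missing idea. Without truncation, $g(t)=\int_{-\infty}^t(f')^2\le f'(t)f(t)$, so $h^2f'-gf\ge0$ and there is no coercivity. The paper fixes a level $K=f(\kappa)$ and takes $h=\max\{f,K\}$ and $g(t)=\int_\kappa^{\max\{t,\kappa\}}(f')^2$.

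\begin{itemize}
\item Monotonicity of $f'$ gives $g\le f'(f-K)$ for $t\ge\kappa$, hence $h^2f'-gf\ge K\,ff'$ on $\{u\ge\kappa\}$.
\item The right-hand side of \eqref{eq:stabilityhg} is $\lesssim\int_{B_{2/3}}(f(u)-K)_+^2+K^2$.
\item Convexity gives $(f(u)-K)_+\le f'(u)(u-\kappa)_+$, so $(f(u)-K)_+^2\le 2L\,f(u)f'(u)$. This puts $ff'$ on the right with the small coefficient $\sim L/K$.
\item The sublevel set $\{f(u)\le K\}$ contributes at most $K\int_{B_{1/2}}f'(u)\le CK$.
\end{itemize}

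Choosing $K\simeq\eps^{-1}\bigl(f(\inf_{B_1}u)+L\bigr)$, with $f(\inf u)\lesssim L$ from testing the equation, yields
\[
\int_{B_{1/2}}ff'\le\eps\int_{B_1}ff'+C\eps^{-1}L .
\]
This still has the integral over the larger ball on the right. So you are also missing the scale-invariant absorption step: \Cref{lem:covering} applied to $\mathfrak m(E)=\int_E ff'$ with weight $r^{4-n}$. That step is where finiteness of $\mathfrak m$ on compactly contained balls is genuinely needed, which is the real reason for reducing to $\sing(u)=\emptyset$ and then invoking \Cref{prop:approximation}.
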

\begin{remark}
This bound implies that, if $T_f<+\infty$ or $n\le 9$ (see \cite{actadim9}), then $f'(u)f(u)\in L^1_\loc$.
\end{remark}

To prove the result, we first 
show a preliminary bound in $B_1$ that we will iterate at smaller scales.

\begin{lemma}\label{lem:max-test}
Let $(u,f)\in \sol(B_1)$ with $\sing(u)=\emptyset$. Then, for every $\eps>0$,
\begin{equation}\label{eq:max-test}
 \int_{B_{1/2}} f(u)f'(u)\,dx
 \le \eps\int_{B_1} f(u)f'(u)\,dx
 +\frac{C}{\eps}\|u\|_{L^\infty(B_1)},
\end{equation}
where $C=C(n)$ is dimensional.
\end{lemma}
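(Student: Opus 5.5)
The plan is to apply \Cref{lem:handg} with a pair $(h,g)$ built from $f$ itself and shifted by a large constant $A$. Then the leading terms on the left-hand side cancel up to a multiple of $A\,f f'$. The right-hand side will contain $f^2$-type terms, and these will be converted back into $f f'$ using convexity together with the bound $|u|\le M:=\|u\|_{L^\infty(B_1)}$. Choosing $A\sim M/\eps$ should produce exactly the structure of \eqref{eq:max-test}.

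\textbf{Step 1 (choice of $h,g$).} Take
\[
h(t):=f(t)+A,\qquad g(t):=f(t)f'(t),\qquad G(t):=\int_{-M}^t f f'\,ds=\tfrac12\big(f(t)^2-f(-M)^2\big),
\]
with $A>0$ to be fixed. Since $g'=(f')^2+f f''\ge (f')^2=(h')^2$, hypothesis \eqref{eq:handg} holds; if needed, I would first mollify $f$, or argue on $\{u>-M\}$, since $\sing(u)=\emptyset$ makes $u$ smooth. On the left-hand side of \eqref{eq:stabilityhg},
\[
h^2f'-gf=(f+A)^2f'-f^2f'\ \ge\ 2A\,f f',
\]
so the left-hand side is at least $2A\int f(u)f'(u)\zeta^2$. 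Here $\zeta$ is a standard cutoff with $\zeta=1$ on $B_{1/2}$, supported in $B_1$, with $|\nabla\zeta|+|\Delta\zeta|\le C(n)$.

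\textbf{Step 2 (bounding the right-hand side).} Since $0\le 2G\le f^2$ on $[-M,M]$ and $h^2\le 2f^2+2A^2$, the right-hand side of \eqref{eq:stabilityhg} is bounded by
\[
C(n)\int_{B_1}\big(f(u)^2+A^2\big)\,dx .
\]
To control $f^2$, I will use convexity and monotonicity: for $t\in[-M,M]$,
\[
f(t)\le f(-M)+f'(t)(t+M)\le f(-M)+2M\,f'(t),
\]
hence $f(u)^2\le f(-M)f(u)+2M\,f(u)f'(u)$. Testing the equation with a cutoff, exactly as in Step 4 of \Cref{lem:decay}, gives both $f(-M)\le C(n)M$ and $\int_{B_{3/4}}f(u)\le C(n)M$. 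For the cutoff region one can enlarge the support slightly, or repeat the argument on $B_1$ with a cutoff adapted to $B_{1-\delta}$.

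\textbf{Step 3 (conclusion).} Putting the pieces together,
\[
2A\int_{B_{1/2}}ff'\ \le\ C M\int_{B_1}ff'+CM^2+CA^2 .
\]
The $A^2$ term comes from $A^2\int h$-weights, and can also be bounded by $A^2$ times the universal quantity $\int|\nabla\zeta|^2+|\zeta\Delta\zeta|$. Dividing by $2A$ and choosing $A:=CM/(2\eps)$ gives
\[
\int_{B_{1/2}}ff'\le \eps\int_{B_1}ff'+C\big(\eps M+M/\eps\big),
\]
which is \eqref{eq:max-test}. The case $M=0$ is trivial by approximation.

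\textbf{Main obstacle.} The delicate point is Step 2: the right-hand side naturally produces $f(u)^2$, which is not a priori comparable to $f f'$. The convexity inequality above, which relies on $u\ge -M$, is the key device. Its leftover term $f(-M)\int f(u)$ must be controlled purely in terms of $M$ through the equation; I expect this to work because $f\ge0$ and $u$ is bounded. A secondary technical issue is justifying \Cref{lem:handg} for $h=f+A$ when $f$ is only convex. This should follow by regularizing $f$, or simply because $u$ is smooth with range in a compact subset of $(-\infty,T_f)$ when $\sing(u)=\emptyset$.
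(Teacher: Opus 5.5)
Your proof is correct and uses the same mechanism as the paper. Apply \Cref{lem:handg} with $h$ a large-parameter modification of $f$. Use convexity (from $u\ge -M$ in your version, from $u>\kappa$ on the relevant set in the paper's) to turn the $f(u)^2$ terms on the right into $M\,f(u)f'(u)$. Then take the parameter of order $M/\eps$.

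The one difference is in the choice of $h$:
\begin{itemize}
\item The paper truncates, taking $h=\max\{f,K\}$ with $f(\kappa)=K$, and treats $\{f(u)\le K\}$ separately through the stability bound $\int_{B_{1/2}}f'(u)\le C$.
\item Your shift $h=f+A$ gives the coercive term $2A\,f(u)f'(u)$ on the whole domain. The price is the extra term $f(-M)\int_{B_{3/4}}f(u)\le CM^2$, which is harmless.
\end{itemize}

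Two small fixes are needed.
\begin{itemize}
\item Your $g=ff'$ need not lie in $W^{1,1}_{\loc}$: the left derivative $f'$ can jump, and smoothness of $u$ does not help with this. Use instead $g(t):=\int_{-M}^t f'(s)^2\,ds$, which is the paper's choice. It satisfies $g'=(h')^2$, and $0\le g\le ff'$ and $0\le 2G\le f^2$ for $t\ge -M$, so all your bounds go through unchanged.
\item Your final bound carries a leftover $\eps M$ term. This is fine: for $\eps\le 1$ it is dominated by $M/\eps$, and for $\eps\ge1$ the inequality holds trivially.
\end{itemize}
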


\begin{proof}
We first record two elementary bounds. By stability, applied with a cutoff equal to one in $B_{1/2}$,
\begin{equation}\label{eq:stability-fprime-bound}
\int_{B_{1/2}} f'(u)\,dx\le C.
\end{equation}
Also, if $\eta\in C_c^2(B_1)$ satisfies $\eta\ge \chi_{B_{2/3}}$, then testing the equation against $\eta$ gives
\begin{equation}\label{eq:f-L1-bound}
\int_{B_{2/3}} f(u)\,dx
= -\int_{B_1} u\,\Delta\eta\,dx
\le C \|u\|_{L^\infty(B_1)}.
\end{equation}
Let $K>0$ be a parameter to be chosen. On the set $\{f(u)\le K\}$, \eqref{eq:stability-fprime-bound} gives
\begin{equation}\label{eq:low-f-bound}
\int_{\{f(u)\le K\}\cap B_{1/2}} f(u)f'(u)\,dx
\le K\int_{B_{1/2}} f'(u)\,dx
\le CK.
\end{equation}
We now estimate the contribution of $\{f(u)>K\}$. If this set is empty, then \eqref{eq:max-test} follows from \eqref{eq:low-f-bound}, after the choice of $K$ below. Otherwise, choose $\kappa<T_f$ such that
\[
f(\kappa)=K,
\]
and apply \Cref{lem:handg} with
\[
h(t):=\max\{f(t), K\},
\qquad
g(t):=\int_{\kappa}^{\max\{t,\kappa\}} f'(s)^2\,ds.
\]
Then \(g'\ge (h')^2\). Moreover, for \(t\ge\kappa\), convexity gives
\[
g(t)\le f'(t)\int_\kappa^t f'(s)\,ds
=f'(t)(f(t)-K),
\]
hence
\[
\bigl(h^2f'-gf\bigr)(u)
\ge K f(u)f'(u)\qquad \text{inside }\{f(u)\ge K\}.
\]
Let $\zeta\in C_c^2(B_{2/3})$ satisfy $\zeta\equiv1$ in $B_{1/2}$, and set
\[
G(t):=\int_{\kappa}^{\max\{t,\kappa\}} g(s)\,ds.
\]
Then 
\[
0\le G(t)\le \frac12(f(t)-K)_+^2,\qquad 
h(t)^2\le C\bigl((f(t)-K)_+^2+K^2\bigr),
\]
and the right-hand side in \Cref{lem:handg} is bounded by
\[
C\int_{B_{2/3}} (f(u)-K)_+^2\,dx + CK^2.
\]
Combining this with the lower bound on the left-hand side yields
\[
K\int_{\{f(u)\ge K\}\cap B_{1/2}} f(u)f'(u)\,dx
\le
C\int_{B_{2/3}} (f(u)-K)_+^2\,dx + CK^2.
\]
After dividing by \(K\) and adding \eqref{eq:low-f-bound}, we obtain
\begin{equation}\label{eq:max-test-preliminary}
\int_{B_{1/2}} f(u)f'(u)\,dx
\le
\frac{C}{K}\int_{B_{2/3}} (f(u)-K)_+^2\,dx
+CK.
\end{equation}
We next bound the square term. On \(\{f(u)>K\}=\{u>\kappa\}\), convexity gives
\[
f(u)-K
\le f'(u)(u-\kappa),
\]
thus
\[
(f(u)-K)_+^2
\le (f(u)-K)_+ f'(u)(u-\kappa)_+
\le f(u)f'(u)\,\|u-\kappa\|_{L^\infty(B_1)}.
\]
Plugging this into \eqref{eq:max-test-preliminary} gives
\begin{equation}\label{eq:max-test-before-K}
\int_{B_{1/2}} f(u)f'(u)\,dx
\le
\frac{C\|u-\kappa\|_{L^\infty(B_1)}}{K}
\int_{B_1} f(u)f'(u)\,dx
+CK.
\end{equation}
It remains to choose \(K\). Let
\[
m:=\inf_{B_1}u
\]
and choose
\[
K:=\frac{A}{\eps}\bigl(f(m)+\|u\|_{L^\infty(B_1)}\bigr),
\]
where \(A\) is a dimensional constant large enough. By \eqref{eq:f-L1-bound} and monotonicity of \(f\),
\[
f(m)\le \fint_{B_{2/3}} f(u)\,dx\le C\|u\|_{L^\infty(B_1)},\qquad \text{so}\quad 
K\le \frac{C A}{\eps}\|u\|_{L^\infty(B_1)}.
\]
Since \(\{f(u)>K\}\) is nonempty, then \(f(\kappa)=K>f(m)\). Hence \(\kappa\ge m\ge -\|u\|_{L^\infty(B_1)}\), while also \(\kappa\le \|u\|_{L^\infty(B_1)}\), therefore
\[
\|u-\kappa\|_{L^\infty(B_1)}\le 2\|u\|_{L^\infty(B_1)}.
\]
Thus, choosing \(A\) sufficiently large,
\[
\frac{C\|u-\kappa\|_{L^\infty(B_1)}}{K}\le \eps.
\]
Using this in \eqref{eq:max-test-before-K}, and recalling the bound on \(K\), gives
\[
\int_{B_{1/2}} f(u)f'(u)\,dx
\le
\eps\int_{B_1} f(u)f'(u)\,dx
+\frac{C}{\eps}\|u\|_{L^\infty(B_1)},
\]
as desired.
\end{proof}

In order to complete the proof of \Cref{prop:ff'}, we recall the following classical result (see \cite{simonscaling}; see also \cite[Lemma~A.4]{actadim9}):
\begin{lemma}\label{lem:covering}
Let \(\mathfrak m\) be a nonnegative function defined on open balls \(B\subset B_1\), finite on compactly contained balls, and assume that \(\mathfrak m\) is subadditive under finite covers, namely
\[
B\subset \bigcup_{j=1}^N B_j
\quad\Longrightarrow\quad
\mathfrak m(B)\le \sum_{j=1}^N \mathfrak m(B_j).
\]
Fix \(\beta\in\R\). There exist
\(\varepsilon_0\), \(C>0\), depending only on \(\beta\) and the dimension $n$, with the following property.
If, for some \(M\ge0\) and every \(B_r(y)\subset B_1\),
\[
 r^\beta\mathfrak m(B_{r/4}(y))
 \le
 \varepsilon_0 r^\beta\mathfrak m(B_r(y))+M,
\]
then $\mathfrak m(B_{1/2})\le CM.$
\end{lemma}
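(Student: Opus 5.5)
The plan is Simon's absorption argument. Set
\[
Q:=\sup\Big\{ r^\beta\,\mathfrak m(B_{r/4}(y)) : B_r(y)\subset B_1\Big\}\in[0,+\infty].
\]
I will show $Q\le \tfrac12 Q + CM$, hence $Q\le 2CM$ once $Q<\infty$ is known. Then I deduce the bound on $\mathfrak m(B_{1/2})$ by covering. Note that subadditivity with $N=1$ makes $\mathfrak m$ monotone under inclusion of balls, which I use throughout.

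\textbf{Step 1: the absorption inequality.} Fix $B_\rho(y)\subset B_1$ and a small dimensional $\sigma\in(0,1/8]$, say $\sigma=1/8$. Cover $B_{\rho/4}(y)$ by $N=N(n,\sigma)$ balls $B_{\sigma\rho/4}(z_j)$ with $z_j\in B_{\rho/4}(y)$. Since $4\sigma\rho+\rho/4\le\rho$, each ball $B_{4\sigma\rho}(z_j)$ lies in $B_1$.

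First apply the hypothesis to $B_{\sigma\rho}(z_j)\subset B_1$:
\[
\mathfrak m(B_{\sigma\rho/4}(z_j))\le \varepsilon_0\,\mathfrak m(B_{\sigma\rho}(z_j)) + M(\sigma\rho)^{-\beta}.
\]
Next use the definition of $Q$ on the ball $B_{4\sigma\rho}(z_j)\subset B_1$, which gives $\mathfrak m(B_{\sigma\rho}(z_j))\le (4\sigma\rho)^{-\beta}Q$. Summing over $j$ and multiplying by $\rho^\beta$ yields
\[
\rho^\beta\mathfrak m(B_{\rho/4}(y))\le N\big(\varepsilon_0 (4\sigma)^{-\beta} Q + \sigma^{-\beta}M\big).
\]
Taking the supremum over $B_\rho(y)$ and choosing $\varepsilon_0=\varepsilon_0(n,\beta)$ with $N\varepsilon_0(4\sigma)^{-\beta}\le 1/2$ gives $Q\le \tfrac12 Q + C(n,\beta)M$.

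\textbf{Step 2: conclusion.} Cover $B_{1/2}$ by $N(n)$ balls $B_{1/8}(y_i)$ with $y_i\in B_{1/2}$. Then $B_{1/2}(y_i)\subset B_1$, so
\[
\mathfrak m(B_{1/2})\le \sum_i \mathfrak m(B_{1/8}(y_i))\le N\, 2^{\beta} Q\le CM.
\]

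\textbf{Step 3: finiteness of $Q$ (main obstacle).} A priori $Q$ may be infinite, since $\mathfrak m$ is only finite on compactly contained balls. Monotonicity gives $\mathfrak m(B_{r/4}(y))\le \mathfrak m(B_{1-3r/4})$, so for $\beta\ge0$ the only issue is balls reaching $\partial B_1$.

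To handle this I will first run Steps 1--2 inside $B_s$ for $s<1$. The hypothesis still holds for balls contained in $B_s$, and rescaling $B_s$ to $B_1$ changes $M$ only by the factor $s^{-\beta}$, which is bounded for $s\in[1/2,1)$. I then let $s\uparrow1$ using monotonicity.

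For $\beta<0$ the factor $r^\beta$ blows up on small balls, and this is the delicate point. I would first try a truncated supremum $Q_\delta$ over $r\ge\delta$, which is finite. The difficulty is that Step 1 then produces the recursion $Q_\delta\le\tfrac12 Q_{4\sigma\delta}+CM$. To close it, I would iterate this down the scales $\delta_k=(4\sigma)^k\delta$ and control the tail by the crude bound $r^\beta\mathfrak m(B_{r/4}(y))\le r^\beta\mathfrak m(B_{1-3r/4})$. If that growth turns out to be too fast, I would fall back on Simon's original device of weighting the supremum by a power of the distance to the boundary, e.g. $(1-|y|-r)^{\gamma}$.
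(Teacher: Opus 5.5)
The paper does not prove this lemma; it quotes it from \cite{simonscaling} and \cite[Lemma~A.4]{actadim9}. Your Steps~1--2 are the classical absorption argument, and they are correct whenever the supremum you absorb is finite. For $\beta\ge0$ your restriction to $B_s$ does give a finite supremum, namely $Q_s\le\mathfrak m(B_s)$. There is one slip in that case: ``let $s\uparrow1$ using monotonicity'' goes the wrong way, since monotonicity only gives $\mathfrak m(B_{s/2})\le\mathfrak m(B_{1/2})$. No limit is needed. Fix $s=3/4$ and cover $B_{1/2}$ by $N(n)$ balls $B_{1/16}(y_i)$ with $y_i\in B_{1/2}$. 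Then $B_{1/4}(y_i)\subset B_{3/4}$, so $\mathfrak m(B_{1/16}(y_i))\le 4^{\beta}Q_{3/4}$.

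The genuine gap is the case $\beta<0$, and it is not a side case: the paper applies the lemma with $\beta=4-n<0$ (for $n\ge5$) in the proof of \Cref{prop:ff'}. For $\beta<0$ you never actually establish that some absorbable quantity is finite, and the route you would try first cannot close. For balls in $B_1$, the tail bound $r^\beta\mathfrak m(B_{1-3r/4})$ involves $\mathfrak m$ on balls approaching $\partial B_1$. There $\mathfrak m$ is only known to be finite, and it may grow arbitrarily fast as $r\downarrow0$, so $\theta^kQ_{(4\sigma)^k\delta}$ need not tend to zero. The distance-weighted fallback is left unspecified.

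The missing step is to perform the truncation \emph{inside} $B_s$. Set $Q_{s,\delta}:=\sup\{\rho^\beta\mathfrak m(B_{\rho/4}(y)):B_\rho(y)\subset B_s,\ \rho\ge\delta\}$, which satisfies $Q_{s,\delta}\le\delta^\beta\mathfrak m(B_s)<\infty$. Your Step~1 applies unchanged, because all balls involved stay inside $B_\rho(y)\subset B_s$. It gives $Q_{s,\delta}\le\theta\, Q_{s,4\sigma\delta}+CM$ with $\theta=N\varepsilon_0(4\sigma)^{-\beta}$. After $k$ iterations the remainder is at most $\theta^k\big((4\sigma)^k\delta\big)^\beta\mathfrak m(B_s)=(N\varepsilon_0)^k\delta^\beta\mathfrak m(B_s)$. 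This tends to zero provided you additionally impose $N\varepsilon_0<1$; for $\beta<0$ this is not automatically implied by $\theta\le\frac12$. You then get $Q_{s,\delta}\le 2CM$ uniformly in $\delta$, and the covering step above concludes. Alternatively, iterating the hypothesis on concentric balls $B_{4^{-j}r_0}(y)$ with $r_0=(1-s)/2$ shows directly that $Q_s<\infty$ as soon as $4^{|\beta|}\varepsilon_0<1$.
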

We can now prove the desired estimate.

\begin{proof}[Proof of \Cref{prop:ff'}]
It is enough to establish \Cref{prop:ff'} in the regular case, namely $\sing(u)=\emptyset,$ and then argue by approximation from below using for example \Cref{prop:approximation} (of course if $u\notin L^\infty(B_1)$ there is nothing to prove). 

We define
\[
\mathfrak m(E):=\int_E f(u)f'(u),\qquad M:=\frac{C}{\eps}\|u\|_{L^\infty(B_1)},
\]  
and we apply the inequality at scale one to $u(y+r\,\cdot)$, getting
\[
r^{4-n}\mathfrak m(B_{r/2}(y)) \le \eps r^{4-n} \mathfrak m(B_{r}(y)) + \frac C\eps\|u\|_{L^\infty(B_r(y))}\le \eps r^{4-n} \mathfrak m(B_{r}(y)) + M.
\]
Choosing $\eps=\eps_0(n)$ dimensionally small we obtain \eqref{eq:goal}, since the assumption $\sing(u)=\emptyset$ ensures that $\mathfrak m(B_{\rho})<+\infty$ for all $\rho<1$. 
\end{proof}

\subsection{Proof of \texorpdfstring{\Cref{prop:supf-2D}}{}}
Before explaining the proof we need two preliminary lemmas. The first was already observed in \Cref{rmk:lapfp} and is valid for all $n \geq 2$.
\begin{lemma}\label{lem:W2p}
    If $-\lap u=f(u)$ with $B_1\subset\R^n$, $f\ge0$, $f''\ge0$, and
    \[
    1\le p<+\infty\quad\text{if }n=2,
    \qquad
    1\le p<\frac{n}{n-2}\quad\text{if }n\ge3,
    \]
    then
    \[
    \|\lap u\|_{L^p(B_{1/2})}\le C(n,{p})\big( \|\lap u\|_{L^1(B_1)} +\|f'(u)f(u)\|_{L^1(B_1)} \big).
    \]
\end{lemma}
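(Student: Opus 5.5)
The plan follows the computation already sketched in \Cref{rmk:lapfp}, with $p=1$. Since $\lap u=-f(u)$, it suffices to bound $\|f(u)\|_{L^p(B_{1/2})}$. The key observation is that $w:=f(u)\ge 0$ is a supersolution-type quantity with controlled Laplacian. I would first argue formally and then justify it by regularization.

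\textbf{Step 1: the distributional inequality.} By the chain rule and convexity of $f$,
\[
-\lap f(u)=f'(u)(-\lap u)-f''(u)|\nabla u|^2\le f'(u)f(u).
\]
To make this rigorous for a weak solution (where $f$ is only convex and $u\in H^1$), I would test against $\varphi\ge0$ in the form $\int \nabla f(u)\cdot\nabla\varphi \le \int f'(u)f(u)\varphi$. Approximating $f$ by smooth convex functions, or truncating $f$ at height $k$ (which stays convex when composed as $\min\{f,k\}$ is replaced by a convex truncation), one passes to the limit using $f(u),\,f'(u)f(u)\in L^1(B_1)$; if the right-hand side is infinite there is nothing to prove. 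The upshot is $\lap w\ge -\mu_0$ with $\mu_0:=f'(u)f(u)\in L^1(B_1)$. Hence $\lap w+\mu_0=:\mu$ is a nonnegative measure.

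\textbf{Step 2: local bound on the mass of $\lap w$.} Take a cutoff $\eta\in C^2_c(B_1)$ with $\eta=1$ on $B_{3/4}$. Then
\[
\mu(B_{3/4})\le\int\eta\,d\mu=\int w\lap\eta+\int\eta\,\mu_0\le C\|f(u)\|_{L^1(B_1)}+\|f'(u)f(u)\|_{L^1(B_1)}.
\]
Consequently $|\lap w|(B_{3/4})\le \mu(B_{3/4})+\|\mu_0\|_{L^1}$ is bounded by the same quantity.

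\textbf{Step 3: the Green representation.} On $B_{3/4}$ write $w=\Gamma*(\chi_{B_{3/4}}\lap w)+ \psi$, where $\Gamma$ is the fundamental solution and $\psi$ is harmonic. The Newtonian potential of a finite measure lies in $L^p$ for $p<n/(n-2)$ (for any $p<\infty$ when $n=2$, where $\log$ is in every $L^p$), with norm at most $C$ times the total mass. The harmonic part $\psi$ is controlled in $L^\infty(B_{1/2})$ by its $L^1(B_{3/4})$ norm via the mean value property. That norm is in turn bounded by $\|w\|_{L^1}$ plus the $L^1$ norm of the potential. This gives
\[
\|f(u)\|_{L^p(B_{1/2})}\le C(n,p)\bigl(\|\lap u\|_{L^1(B_1)}+\|f'(u)f(u)\|_{L^1(B_1)}\bigr).
\]

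\textbf{Main obstacle.} The only delicate point is Step 1: justifying $-\lap f(u)\le f'(u)f(u)$ distributionally when $f$ is merely convex and possibly has a kink or a blow-up, with $u$ only in $H^1$. I would handle it by mollifying $f$ on compact subintervals below $T_f$, composing with truncations $\min\{u,s\}$, and using monotone convergence. Steps 2 and 3 are standard potential theory.
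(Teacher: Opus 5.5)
Your argument is correct. Steps 2--3 follow the sketch in \Cref{rmk:lapfp}, which is a different route from the paper's proof of this lemma. The paper never estimates the mass of $\Delta f(u)$. With $h=f(u)$, it subtracts only the Newtonian potential $H$ of the $L^1$ density $f'(u)f(u)\chi_{B_{3/4}}$ and notes that $h-H$ is subharmonic in $B_{3/4}$. It then uses $h\ge0$ to write $0\le h\le H+(h-H)_+$. The first term is in $L^p$ because the fundamental solution is locally in $L^p$ for the allowed range of $p$. The second is bounded in $L^\infty(B_{1/2})$ by its $L^1(B_{3/4})$ norm, via the mean-value inequality for the subharmonic function $(h-H)_+$.

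You instead turn the one-sided inequality into two-sided control: testing the nonnegative measure $\Delta w+f'(u)f(u)$ against a cutoff bounds $|\Delta w|(B_{3/4})$. The Riesz decomposition (potential of a finite measure plus a harmonic remainder) then finishes the proof.

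The paper's version is a little shorter, using only the sign information and $h\ge0$. Yours never uses $w\ge0$ in Steps 2--3, so it applies to any $w\in L^1(B_1)$ with $\Delta w\ge-\mu_0$, $\mu_0\in L^1(B_1)$. It also shows as a by-product that $\Delta f(u)$ is a finite measure on $B_{3/4}$.

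On Step 1, which the paper simply asserts, two of your details need fixing. First, $\min\{f,k\}$ is not convex. Second, the $W^{1,1}$ form $\int\nabla f(u)\cdot\nabla\varphi$ would require $f'(u)|\nabla u|\in L^1_\loc$, which is not given. A clean fix is to use convex approximations $f_k\uparrow f$ with $0\le f_k\le f$ and $f_k'\le f'$. Replace $f$ by its tangent line beyond a level $t_k$, then mollify using only values to the left of $t$. Testing the equation with $f_k'(u)\varphi$ then gives, in the limit, $-\int f(u)\,\Delta\varphi\le\int f'(u)f(u)\,\varphi$ for all $\varphi\ge0$, which is all you need.
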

\begin{proof}
The function $h:=f(u)=-\lap u$ satisfies 
    \[
    h\ge0\quad\text{ and }\quad \Delta h\ge -f'(u)f(u)\quad \text{ in } B_1.
    \]
	    Let $\mathcal G$ be the negative fundamental solution for the Laplacian in $\R^n$ (so $\Delta \mathcal G=\delta_0$). We set
    \[ 
    H(x):=-\mathcal G*(f'(u)f(u)\chi_{B_{3/4}}),
    \]
    so that  $(h-H)_+$ is subharmonic and $H>0$ in $B_{3/4}$.
   Thus, since $\mathcal G \in L^p(B_1)$ by the restriction on \(p\), and
    $|h|=h\le H+(h-H)_+$, we get
    \begin{align*}
    \|h\|_{L^p(B_{1/2})}&\le \|H\|_{L^p(B_{1/2})} + \|(h-H)_+\|_{L^p(B_{1/2})}\\
    &\le \|\mathcal G\|_{L^p(B_{1})}\|f'(u)f(u)\|_{L^1(B_{3/4})}+ C_p \|(h-H)_+\|_{L^1(B_{3/4})}\\
    &\le C_p \, \|f'(u)f(u)\|_{L^1(B_{3/4})}+ C_p \,\|h\|_{L^1(B_{3/4})},
    \end{align*}
  as desired.
\end{proof}




We will also need the following general formula.

\begin{lemma}\label{lem:ODEformula}
    Assume $(u,f)\in\sol(B_1)$, with $B_1\subset\R^n$, and
    $f'(u)|\nabla u|\in L^1_\loc(B_1)$. Define
    \[
    \hat w(r):=\fint_{B_r} f(u), 
    \]
    Then, in the variable $t:=\log(1/r)$, $\hat w$ satisfies  \begin{equation}\label{eq:ODEW}
        \hat w_{tt}-n\, \hat w_t=n\fint_{\partial B_r} f'(u)\,(x\cdot \nabla u).
    \end{equation} 
\end{lemma}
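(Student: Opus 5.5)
The plan is to differentiate the spherical and ball averages of \(f(u)\) in \(r\), and to convert the result to the logarithmic variable \(t=\log(1/r)\) at the end. Write \(|B_r|=\omega_n r^n\) and set
\[
W(r):=\int_{B_r} f(u)\,dx=-\int_{\partial B_r}\partial_\nu u\,d\mathcal H^{n-1},
\]
where the last identity is the divergence theorem for the equation. By the coarea formula, \(W'(r)=\int_{\partial B_r}f(u)\) for a.e.\ \(r\). Set also \(\phi(r):=\fint_{\partial B_r} f(u)\). Then \(\hat w=W/(\omega_n r^n)\), and one computes
\[
r\hat w_r=\frac{W'}{\omega_n r^{n-1}}-\frac{nW}{\omega_n r^n}=n\big(\phi-\hat w\big),
\]
using \(|\partial B_r|=n\omega_n r^{n-1}\).

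Next, pass to \(t=\log(1/r)\). There \(\partial_t=-r\partial_r\), so \(\hat w_t=-n(\phi-\hat w)\). Differentiating once more,
\[
\hat w_{tt}=-r\partial_r\big(-n(\phi-\hat w)\big)=n r\phi_r-n r\hat w_r=n r\phi_r+n\hat w_t .
\]
Hence \(\hat w_{tt}-n\hat w_t=n\,r\phi_r\). It therefore remains to identify \(r\phi_r\) with \(\fint_{\partial B_r} f'(u)(x\cdot\nabla u)\).

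For this, write \(\phi(r)=\fint_{\partial B_1} f(u(r\theta))\,d\theta\) and differentiate under the integral:
\[
\phi_r=\fint_{\partial B_1} f'(u(r\theta))\,\theta\cdot\nabla u(r\theta),
\qquad\text{so}\qquad r\phi_r=\fint_{\partial B_r} f'(u)\,(x\cdot\nabla u).
\]
This gives \eqref{eq:ODEW}.

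The main obstacle is justifying these differentiations at low regularity, since \(u\) is only in \(H^1\) and \(f(u)\) may be singular. The plan is to interpret \eqref{eq:ODEW} in the distributional sense in \(t\), equivalently for a.e.\ \(r\) with the derivatives understood as weak derivatives. By the chain rule for the convex, locally Lipschitz \(f\) composed with a Sobolev function, together with the hypothesis \(f'(u)|\nabla u|\in L^1_\loc\), one gets \(f(u)\in W^{1,1}_\loc\) with \(\nabla f(u)=f'(u)\nabla u\). Passing to polar coordinates, \(\phi\) is then absolutely continuous in \(r\) with the stated derivative for a.e.\ \(r\). The function \(W\) is absolutely continuous because \(f(u)\in L^1_\loc\), which justifies the formula for \(\hat w_r\).

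To make the argument fully rigorous, one alternative is to mollify: apply the computation to \(f(u)*\rho_\delta\), where the identities are classical, and pass to the limit using \(L^1_\loc\) convergence of \(f(u)\) and of \(f'(u)\nabla u\). The other is to test against \(\psi(|x|)\) with \(\psi\in C^\infty_c\) and integrate by parts. Either route yields \eqref{eq:ODEW} as an identity between \(L^1_\loc\) functions of \(t\).
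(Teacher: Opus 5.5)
Your proposal is correct and is essentially the paper's argument. The paper also differentiates the ball average twice in $r$: its quantity $\partial_r\big(r^{1-n}\partial_r(r^n\hat w)\big)$ is exactly your $n\,\phi_r$. It then identifies $r\phi_r=\fint_{\partial B_r} f'(u)\,(x\cdot\nabla u)$ through the $W^{1,1}_{\loc}$ chain rule $\nabla f(u)=f'(u)\nabla u$, which the hypothesis $f'(u)|\nabla u|\in L^1_{\loc}$ provides, and converts to $t=\log(1/r)$.

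One small imprecision: you call $f$ locally Lipschitz, but this holds only on $(-\infty,T_f)$. The chain rule should therefore be justified along a.e.\ line, as in the paper's footnote, or via Lipschitz truncations of $f$ below $T_f$ combined with the integrability of $f'(u)|\nabla u|$.
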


\begin{proof}
Since \((u,f)\in\sol(B_1)\), we have \(u\in H^1_\loc(B_1)\subset W^{1,1}_\loc(B_1)\). Hence, by the chain rule for convex functions and the assumption \(f'(u)|\nabla u|\in L^1_\loc(B_1)\), we have \(f(u)\in W^{1,1}_\loc(B_1)\) and\footnote{Indeed, along a.e. line in a direction \(e\), the function \(s\mapsto u(x+se)\) is absolutely continuous and convexity gives
\[
f(u(x+he))-f(u(x))
=\int_0^h f'(u(x+se))\,\partial_e u(x+se)\,ds
\]
whenever the segment is compactly contained in \(B_1\). The assumed integrability of \(f'(u)|\nabla u|\) then gives the \(W^{1,1}\) chain rule.}
\[
\nabla(f(u))=f'(u)\nabla u\qquad\text{a.e.}
\]
Differentiating $r^{n}\hat w(r)$ once we obtain
\[
\partial_r\big(r^{n}\hat w(r)\big)
= \frac{1}{|B_1|}\de_{r}\int_{ B_r} f(u)=\frac{1}{|B_1|}\int_{\partial B_r} f(u)
=n r^{n-1}\fint_{\partial B_1} f(u(r\,\cdot)).
\]
Differentiating again and scaling back gives
\[
\partial_r\big(r^{1-n}\partial_r\big(r^{n}\hat w(r)\big)\big)
= \frac{n}r\fint_{\partial B_r} f'(u)\,(x\cdot \nabla u).
\]
Noticing that $-r\,\partial_r=\partial_t$ (since $t=\log(1/r)$) and that $\partial_t(r^n)=-nr^n$, we can rewrite this as
\[
n\fint_{\partial B_r} f'(u)\,(x\cdot \nabla u)=\partial_t\big(r^{-n}\partial_t\big(r^n \hat w\big)\big)
= \partial_t\big(\partial_t\hat w-n\hat w\big),
\]
and \eqref{eq:ODEW} follows.
\end{proof}

Next, in the two-dimensional case, we prove a quasi-monotonicity statement for averages of $f(u)$ centered at an arbitrary point.
\begin{lemma}\label{lem:quasiavg2D}
Let $(u,f)\in\sol(B_1)$, with $B_1\subset\R^2$.
There exist universal constants $r_0\in(0,1)$ and $C>0$ such that, setting
\[
A_0:=\int_{B_{9r_0}} f(u)f'(u)\,dx,
\qquad
W(r):=\fint_{B_r} f(u)\,dx+A_0,
\]
one has
\[
\sup_{0<r<r_0} W(r)\le C\, W(r_0).
\]
\end{lemma}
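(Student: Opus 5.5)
The plan is to use \Cref{lem:ODEformula} in dimension $n=2$, in the variable $t=\log(1/r)$, and read \eqref{eq:ODEW} as a second-order ODE inequality for $\hat w(r)=\fint_{B_r}f(u)$. Here $\hat w_t$ is the derivative with respect to $t$ of a mean of a superharmonic-type quantity. In $n=2$ we have $n\fint_{\partial B_r} f'(u)(x\cdot\nabla u)=\frac{r}{\pi}\int_{\partial B_r} f'(u)\,\partial_\nu u\,d\mathcal H^1\cdot\frac{1}{r}\cdot\ldots$, which is a boundary flux of $f'(u)\nabla u$ weighted by $r$. First I would justify the hypothesis $f'(u)|\nabla u|\in L^1_\loc$. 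By Cauchy--Schwarz, $f'(u)|\nabla u|\le f'(u)f(u)+f'(u)|\nabla u|^2/f(u)$. Alternatively, I would work in the regular case $\sing(u)=\emptyset$ and approximate via \Cref{prop:approximation}; since $A_0$ is controlled through \Cref{prop:ff'}, the constants pass to the limit.

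The key step is to integrate the ODE. Writing $\hat w_{tt}-2\hat w_t=\Phi(t)$ with $\Phi(t)=2\fint_{\partial B_r}f'(u)(x\cdot\nabla u)$, we get $\partial_t(e^{-2t}\hat w_t)=e^{-2t}\Phi$, that is $\partial_t(r^2\hat w_t)=r^2\Phi$. Integrating from $t_0=\log(1/r_0)$ to $t$ gives $r^2\hat w_t(t)=r_0^2\hat w_t(t_0)+\int_{t_0}^t r(s)^2\Phi(s)\,ds$. Now $\int r^2\Phi\,ds=\frac{2}{\pi}\int_{B_{r_0}\setminus B_r} f'(u)\,\frac{x\cdot\nabla u}{|x|^2}\,|x|^2\,dx$ up to constants, which equals a constant times $\int_{B_{r_0}\setminus B_r} f'(u)\,x\cdot\nabla u\,dx$. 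Its absolute value is bounded by $r_0\int_{B_{r_0}}f'(u)|\nabla u|$. This is the delayed-ODE structure: the forcing at level $t$ is an average over all larger scales. To control $\int_{B_{r_0}} f'(u)|\nabla u|$ by $A_0$, I would use the identity $f'(u)\nabla u=\nabla f(u)$ together with a Caccioppoli-type estimate for $f(u)$, which is a nonnegative function with $\Delta f(u)\ge -f'(u)f(u)$ (compare \Cref{lem:W2p}). Via \Cref{lem:W2p} this yields $\|\nabla f(u)\|_{L^1(B_{r_0})}\lesssim \int_{B_{9r_0}}f(u)f'(u)+\fint_{B_{9r_0}}f(u)$. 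In two dimensions $\Delta h\ge -\mu$ with $\mu\in L^1$ gives $\nabla h\in L^{2-}_{\rm weak}$, which is locally $L^1$ with the right scaling.

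With this, $|\hat w_t(t)|\le r^{-2}\big(r_0^2|\hat w_t(t_0)|+C r_0 \,\mathcal A\big)$, which blows up as $r\to 0$. This is too crude, so instead I would exploit a sign. Since $\hat w(r)=\fint_{B_r}f(u)$, we have $\hat w_t=-r\partial_r\hat w$, and $r\partial_r\hat w=2(\fint_{\partial B_r}f(u)-\hat w)$. The quantity $\fint_{\partial B_r} f(u)-\fint_{B_r}f(u)$ is controlled by the mass of $(\Delta f(u))_-\le f'(u)f(u)$ inside $B_r$ (Green's representation in 2D): $\hat w_t\lesssim \int_{B_r}f'(u)f(u)\le A_0$ for $r<r_0$. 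Integrating in $t$ would then only give logarithmic growth, $W(r)\le W(r_0)+CA_0\log(r_0/r)$. To beat the logarithm I would combine the two pieces of information: the ODE shows that $\hat w_t$ can be positive only in a way compensated by decay of the flux, since $\int_{t_0}^\infty r^2\Phi$ is a convergent integral weighted by $r^2$. I would choose $r_0$ universal, and bound $\sup W$ by $C(W(r_0)+A_0)$, with $\fint_{B_{r_0}}f(u)$ absorbing the large-scale terms.

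The main obstacle is precisely this last point: turning the delayed ODE into a scale-uniform, non-logarithmic bound. I expect it to require applying the estimate at all centers and scales, with a covering or iteration argument such as \Cref{lem:covering} applied to $\mathfrak m(B)=\int_B f(u)f'(u)$ with $\beta=2$ in $n=2$. This uses that the critical scaling $r^{4-n}=r^2$ makes $r^2\fint_{B_r}f'f\to 0$, which is what makes $A_0$ an absolute rather than logarithmic error.
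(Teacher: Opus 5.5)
Your proposal does not prove the lemma. The only bound you actually establish is the logarithmic one, and the step that is supposed to remove the logarithm is left as an expectation. Your sign argument gives $\hat w_t\le \frac{1}{2\pi}\int_{B_r}f'(u)f(u)\,dx$, hence
\[
W(r)\le W(r_0)+C\int_{B_{r_0}}\log\frac{r_0}{|x|}\,f'(u)f(u)\,dx ,
\]
and this error term cannot be controlled in general, whether by \Cref{lem:covering} or by any other device.

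Take the example of \Cref{rmk:finitef} with $n=2$. It lies in $\sol$ for $a<1/4$ and has $f(u)\le 1$, so the conclusion of the lemma holds. There $f'(u)f(u)\approx a\,|x|^{-2}\log^{-2}(1/|x|)$, so $\int_{B_r}f'(u)f(u)\approx 2\pi a/\log(1/r)$. This is not integrable in $t=\log(1/r)$, so the logarithmic potential above is $+\infty$. The loss comes from bounding $(\Delta f(u))_-$ by $f'(u)f(u)$, i.e.\ from discarding the term $f''(u)|\nabla u|^2$, which in that example cancels the leading part of $f'(u)f(u)$.

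Two of your other steps also carry no usable information. Your first integration, $\partial_t(r^2\hat w_t)=r^2\Phi$, is just the divergence theorem applied to $x\cdot\nabla f(u)$, so it is an identity. Applying \Cref{lem:covering} to $\mathfrak m(B)=\int_B f(u)f'(u)$ only reproduces \Cref{prop:ff'}, with no decay of $\int_{B_r}f(u)f'(u)$ in $r$.

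The missing idea is to make the forcing relative to $W$ and to keep the favourable sign through a harmonic replacement. The paper sets $\lambda=\mathcal W_t/\mathcal W$ and rewrites \eqref{eq:ODEW} as the delayed Riccati equation \eqref{eq:delayed-W}, with
\[
A(t)=-\frac{1}{\pi W(er)}\int_{\partial B_r}f'(u)\,\partial_\nu u .
\]
Write $u=h_r+p_r$, with $h_r$ harmonic and $h_r=u$ on $\partial B_r$. Then:
\begin{itemize}
\item $\int_{\partial B_r}f'(u)\,\partial_\nu h_r=\int_{B_r}\Delta f(h_r)\ge 0$ by convexity;
\item $\partial_\nu p_r\le 0$ on $\partial B_r$;
\item $|\nabla p_r|\le C\,r\,W(er)$, by \Cref{lem:W2p} rescaled to $B_{er}$ and global $W^{2,p_0}$ estimates.
\end{itemize}
The last bound is where the delay $\mathcal W(t-1)$ comes from, not from an average over all larger scales as in your proposal. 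Hence $A_+(t)\le C\,r\int_{\partial B_r}f'(u)$, and
\[
\int_{t_0}^\infty A_+\,dt\le C\int_{B_{r_0}}f'(u)\le \frac{C}{\log(1/r_0)}
\]
by the two-dimensional logarithmic cutoff in the stability inequality. So the error involves $f'(u)$ alone, not $f'(u)f(u)$, and it is small.

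Two further facts close the argument: $\lambda\le 2$, from the monotonicity of $r^2W(r)$, and $\liminf_{t\to\infty}\lambda\le 2/p_0<1$, from $f(u)\in L^{p_0}_\loc$. With these, a running-maximum argument bounds $\int_{t_0}^t\lambda$ universally, which is the claim.

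A smaller point: to verify the hypothesis of \Cref{lem:ODEformula}, your AM--GM splitting leaves $f'(u)|\nabla u|^2/f(u)$ uncontrolled. The paper instead uses $f(u)\in L^{p_0}_\loc$, hence $\nabla u\in L^\infty_\loc$, together with $f'(u)\in L^1_\loc$.
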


\begin{proof}
Fix \(p_0=3\) and \(r_0>0\) universally small to be chosen later, with \(9r_0<1/4\). 
For \(0<r<r_0\), let \(p_r\) solve
\[
-\Delta p_r=f(u)\quad\text{in }B_r,\qquad p_r=0\quad\text{on }\partial B_r.
\]
By global \(W^{2,p_0}\) estimate and Sobolev embedding,
\[
\|\nabla p_r\|_{L^\infty(B_r)}
\le C r^{1-2/p_0}\|f(u)\|_{L^{p_0}(B_r)}.
\]
Also, applying \Cref{lem:W2p} to the rescaled equation
\[
-\Delta u(er\,\cdot)=(er)^2 f(u(er\,\cdot)),
\]
and using \(B_{er}\subset B_{9r_0}\), we get
\[
\|f(u)\|_{L^{p_0}(B_r)}
\le C r^{2/p_0}\left(\fint_{B_{er}} f(u)\,dx+\int_{B_{er}}f(u)f'(u)\,dx\right)
\le C r^{2/p_0}W(er),
\]
therefore
\begin{equation}\label{eq:poisson-grad-W}
\|\nabla p_r\|_{L^\infty(B_r)}
\le C r W(er)
\qquad\forall\,0<r<r_0.
\end{equation}
Let \(t=\log(1/r)\), and write \(\mathcal W(t):=W(e^{-t})\). Define
\begin{equation}
\label{eq:delay}
\lambda(t):=\frac{\mathcal W_t(t)}{\mathcal W(t)}, \qquad \text{so that}\quad \mathcal W(t-1)=\mathcal W(t)\exp\!\left\{-\int_{t-1}^t\lambda(s)\,ds\right\}.
\end{equation}
All the following differential identities are understood for a.e. radius, or equivalently for a.e. \(t\).
Since \(f(u)\in L^{p_0}_\loc\), we have that \(u\in W^{2,p_0}_\loc\) and therefore \(|\nabla u|\) is locally bounded. Since stability gives \(f'(u)\in L^1_\loc\), \Cref{lem:ODEformula} yields the ODE
\[
\mathcal W_{tt}-2\mathcal W_t
=
2\fint_{\partial B_r} f'(u)\,(x\cdot\nabla u).
\]
Define
\[
A(t):=
-\frac{2\fint_{\partial B_r} f'(u)\,(x\cdot\nabla u)}{W(er)}
=
-\frac{1}{\pi r\,W(er)}
\int_{\partial B_r} f'(u)\,(x\cdot\nabla u),
\]
so that, using \eqref{eq:delay},
we get the delayed Riccati-type ODE
\begin{equation}\label{eq:delayed-W}
\lambda_t+\lambda(\lambda-2)
+A(t)\exp\!\left\{-\int_{t-1}^{t}\lambda(s)\,ds\right\}=0.
\end{equation}
We now record three estimates. First, because
\[
r^2W(r)=\frac1\pi\int_{B_r}f(u)\,dx+A_0r^2
\]
is increasing, 
\[
2-\lambda(t)=2+r\frac{W_r(r)}{W(r)}
=\frac{\partial_r(r^2W(r))}{rW(r)}\ge0,
\]
hence
\begin{equation}\label{eq:lambda-upper}
\lambda(t)\le2.
\end{equation}
Second, we estimate \(A_+(t):=\max\{A(t),0\}\). Set \(h_r:=u-p_r\), where \(p_r\) is as above. Then \(h_r\) is harmonic in \(B_r\) and \(h_r=u\) on \(\partial B_r\). Hence, since $f$ is convex, for a.e. \(r\) we have
\[
r\int_{\partial B_r} f'(u)\partial_\nu h_r
=r\int_{\partial B_r} f'(h_r)\partial_\nu h_r
=r\int_{B_r}\Delta(f(h_r))\ge0.
\]
Here the equality on \(\partial B_r\) uses the trace identity \(h_r=u\), and the last inequality follows by approximating \(f\) with smooth convex functions (equivalently, \(f(h_r)\) is subharmonic because \(h_r\) is harmonic).
Note now that the maximum principle applied to
\(-\Delta p_r=f(u)\ge0\), with \(p_r=0\) on \(\partial B_r\), gives \(p_r\ge0\) in \(B_r\), hence \(\partial_\nu p_r\le0\) on \(\partial B_r\). Thus, using \eqref{eq:poisson-grad-W},
\[
A_+(t)
\le \frac{1}{\pi W(er)}
\int_{\partial B_r} f'(u)(-\partial_\nu p_r)
\le C r\int_{\partial B_r} f'(u),
\]
which implies
\[
\int_{\log(1/r_0)}^\infty A_+(t)\,dt
\le C\int_{B_{r_0}} f'(u)\,dx.
\]
We also observe that, testing stability with a logarithmic cutoff, 
\begin{equation}
\label{eq:stable-log}
    \int_{B_{r_0}} f'(u)\,dx\le \frac{C}{\log(1/r_0)}.
\end{equation}
In particular, choosing \(r_0\) universally small, we may assume
\begin{equation}\label{eq:a-small}
\int_{\log(1/r_0)}^\infty A_+(t)\,dt\le \frac16.
\end{equation}
Third, using again that \(f(u)\in L^{p_0}_{\loc}\), H\"older inequality yields
\[
\int_{B_r}f(u)\,dx\le C r^{2(1-1/p_0)}.
\]
Now, if \(\lambda(t)\ge\beta\) for all large \(t\), then \(W(r)\ge c r^{-\beta}\) for all small \(r\), and therefore
\[
r^2W(r)\ge c r^{2-\beta}.
\]
This contradicts the previous estimate whenever \(\beta>2/p_0\). Thus
\begin{equation}\label{eq:liminf-lambda}
\liminf_{t\to\infty}\lambda(t)\le \frac{2}{p_0}<1.
\end{equation}
We now show how all this implies the desired quasi-monotonicity.

Set
\[
t_0:=\log(1/r_0),\qquad
\Lambda(t):=\int_{t_0}^t\lambda(s)\,ds,
\qquad
M_\Lambda(T):=\max_{t\in[t_0,T]}\Lambda(t).
\]
Since \(\lambda\le2\), we have \(M_\Lambda(t_0+1)\le2\). Also, at every differentiability point of \(M_\Lambda\),
\[
M_\Lambda'(t)>0
\quad\Longrightarrow\quad
\Lambda(t)=M_\Lambda(t),\qquad M_\Lambda'(t)=\lambda(t)>0.
\]
On the open set where \(M_\Lambda'>0\), and for \(t\ge t_0+1\), we have
\[
\Lambda(t)\ge \Lambda(t-1),
\]
hence the exponential factor in \eqref{eq:delayed-W} is at most \(1\).

We claim that
\[
\lambda(t)\le \frac32
\qquad\text{whenever }t\ge t_0+1\text{ and }M_\Lambda'(t)>0.
\]
Indeed, if \(\lambda(t_1)>3/2\) at such a point, then integrating \eqref{eq:delayed-W} forward inside the same increasing interval for \(M_\Lambda\), using \(\lambda\le2\) and the exponential bound, shows that \(\lambda\) cannot drop below
\[
\frac32-\int_{t_1}^\infty A_+(s)\,ds \ge \frac43.
\]
Thus the increasing interval cannot end, and \(\lambda(t)\ge4/3\) for all large \(t\), contradicting \eqref{eq:liminf-lambda}.

Now let $I=(a,b)$ be any maximal interval contained inside $(t_0+1,\infty)$ on which $M_\Lambda'>0$. Since \(0<\lambda\le3/2\), it follows that \(\lambda(2-\lambda)\ge \frac12\lambda\). Thus, integrating \eqref{eq:delayed-W} on $I$ gives
\[
\frac12\int_a^b\lambda(t)\,dt
\le
\lambda(b)+\int_a^b A_+(t)\,dt.
\]
If \(b<+\infty\) then, by maximality and continuity, \(\lambda(b)=0\), and we get
\[
\int_I\lambda(t)\,dt
\le 2\int_I A_+(t)\,dt.
\]
If instead $b=+\infty$, we simply use that $\lambda \leq 2$ to get
\[
\int_I\lambda(t)\,dt
\le 4+ 2\int_I A_+(t)\,dt.
\]
Hence, summing over the disjoint intervals inside $\{M_\Lambda'>0\}\cap (t_0+1,\infty)$ and using \eqref{eq:a-small}, we obtain
\[
\sup_{T\ge t_0+1}M_\Lambda(T)\le M_\Lambda(t_0+1)+4+2\sum_{I}\int_I A_+(t)\,dt \leq M_\Lambda(t_0+1)+4+ \frac13,
\]
therefore
\[
\sup_{t\ge t_0}\int_{t_0}^t\lambda(s)\,ds\le C.
\]
In particular, for every \(0<r<r_0\),
\[
W(r)
=
W(r_0)\exp\!\left\{\int_{t_0}^{\log(1/r)}\lambda(s)\,ds\right\}
\le C\, W(r_0),
\]
which proves the lemma.
\end{proof}

We are finally ready to prove \Cref{prop:supf-2D}. 

\begin{proof}[Proof of \Cref{prop:supf-2D}]
Let \(R:=1/4\) and \(\rho_0:=Rr_0\). Applying \Cref{lem:quasiavg2D} to the rescaled solution \(u(x_0+R\,\cdot)\), with nonlinearity \(R^2f\), and then returning to the original variables, we obtain the following estimate for every \(x_0\in B_{3/4}\): with
\[
A_{x_0}:=\int_{B_{9\rho_0}(x_0)} f(u)f'(u)\,dx,
\qquad
W_{x_0}(r):=\fint_{B_r(x_0)} f(u)\,dx+A_{x_0},
\]
one has
\[
\sup_{0<r<\rho_0} W_{x_0}(r)\le C\,W_{x_0}(\rho_0).
\]
In particular, letting \(r\downarrow0\) we get
\[
f(u(x_0))
\le C\left(\fint_{B_{\rho_0}(x_0)} f(u)\,dx+\int_{B_{9\rho_0}(x_0)} f(u)f'(u)\,dx\right).
\]
Since \(18\rho_0<1/4\), the second term is controlled by \Cref{prop:ff'} after rescaling in \(B_{18\rho_0}(x_0)\):
\[
\int_{B_{9\rho_0}(x_0)} f(u)f'(u)\,dx
\le C\,\|u\|_{L^\infty(B_1)}.
\]
Finally, testing the equation against a cutoff equal to one in \(B_{\rho_0}(x_0)\) and supported in \(B_{2\rho_0}(x_0)\) gives
\[
\fint_{B_{\rho_0}(x_0)} f(u)\,dx\le C\|u\|_{L^\infty(B_1)}.
\]
This proves that \(f(u)\le C\|u\|_{L^\infty(B_1)}\) in \(B_{3/4}\), as desired.
\end{proof}

\subsection{Proof of \texorpdfstring{\Cref{thm:C11-2D}}{}}

\begin{proof}
We first prove the asserted alternative for the singular set. 

We begin by observing that, if \(T_f=+\infty\), then \(\sing(u)=\emptyset\) by \Cref{cor:2Dmems}, since in dimension two the conclusion \(\mathcal H^0(\sing(u))=0\) forces the set to be empty (alternatively, this follows by \cite{Nedev00,Cabre10,actadim9}). Hence, if \(\sing(u)\neq\emptyset\), then necessarily \(T_f<+\infty\).

Let \(z\in\sing(u)\) (in particular, $u(z)=T_f$)
and choose \(s>0\) with \(B_s(z)\Subset B_1\). Then, applying (a rescaled version of) \Cref{prop:supf-2D} inside \(B_s(z)\) gives 
$$
f(T_f)=f(u(z))\le Cs^{-2}\|u\|_{L^\infty(B_s(z))}<+\infty,
$$
as desired.

We now prove the uniform interior $C^{1,1}$ estimates.
Set \(L:=\|u\|_{L^\infty(B_1)}\) and 
$M:=\operatorname*{ess\,sup}_{B_{3/4}}u.$
Since \(f\) is increasing, it follows from \Cref{prop:supf-2D} that
\begin{equation}\label{eq:fM-bound}
f(M)\le C L.
\end{equation}
Set
\[
w:=M-u,\qquad g(s):=f(M-s).
\]
Then \(w\ge0\) in \(B_{3/4}\) and
\[
\Delta w=g(w),
\]
where $g$ is nonnegative, decreasing, and convex, with
\[
g(0)=f(M)\le C L.
\]
If $g(0)=0$, then $g(w)\equiv0$ in \(B_{3/4}\), and the desired estimate follows from the interior estimates for harmonic functions. Hence we assume $g(0)>0$.
By convexity, for every $s>0$,
\begin{equation}\label{eq:gprime-bound}
0\le -s g'(s)\le g(0)-g(s)\le g(0),
\end{equation}
where $g'$ may be understood as either the left or right derivative.

We prove the Hessian bound at points \(y\in B_{1/2}\cap\{w>0\}\). There are two cases.

\smallskip
\noindent\emph{Case 1: \(w(y)\le g(0)/64\).}
Define the intrinsic scale
\[
\rho^2:=\frac{w(y)}{g(0)},\qquad
v(x):=\frac{w(y+\rho x)}{w(y)}.
\]
Since \(y\in B_{1/2}\) and \(\rho\le1/8\), we have \(B_\rho(y)\subset B_{3/4}\), and the rescaled function satisfies
\[
v(0)=1,\qquad v\ge0,\qquad
\Delta v=\frac{g(w(y)v)}{g(0)}.
\]
In particular, $0\le\Delta v\le1$. By Harnack inequality and interior estimates for Poisson equations with bounded right-hand side, this implies
\[
\|v\|_{L^\infty(B_{1/2})}+\|\nabla v\|_{L^\infty(B_{1/2})}\le C.
\]
In particular, after reducing the radius, we obtain
\begin{equation}\label{eq:v-lower-half}
v\ge \frac12\qquad\text{in }B_{\bar r}
\end{equation}
for some universal ${\bar r}>0$.

We now differentiate the equation: for every direction
$e\in \mathbb S^1$,
\[
\Delta \partial_e v
=\rho^2 g'(w(y)v)\,\partial_e v.
\]
Using \eqref{eq:gprime-bound}, \eqref{eq:v-lower-half}, and $\rho^2=w(y)/g(0)$, we get
\[
\big|\rho^2 g'(w(y)v)\big|
\le
\frac{w(y)}{g(0)}\,\frac{g(0)}{w(y)v}
\le \frac{1}{v}\leq 2
\qquad\text{in }B_{\bar r}.
\]
Since $|\partial_e v|\le C$, it follows that
\[
|\Delta\partial_e v|\le C
\qquad\text{in }B_{\bar r},
\]
so interior estimates applied to $\partial_e v$ yield
\[
|D^2v(0)|\le C.
\]
Scaling back, this proves that
\[
|D^2w(y)|
\le C\,\frac{w(y)}{\rho^2}
= C g(0)
= C f(M)
\le C L.
\]

\smallskip
\noindent\emph{Case 2: \(w(y)>g(0)/64\).}
Choose a universal radius \(r_*>0\) so small that \(B_{2r_*}(y)\subset B_{3/4}\) for every \(y\in B_{1/2}\). Since \(0\le\Delta w\le g(0)\), the Harnack inequality with bounded right-hand side gives
\[
w(y)\le C\left(\inf_{B_{r_*}(y)}w+g(0)r_*^2\right).
\]
Taking \(r_*\) smaller if necessary and using \(w(y)>g(0)/64\), we get
\[
w\ge c\,g(0)\qquad\text{in }B_{r_*}(y).
\]
Hence \(|g'(w)|\le C\) in \(B_{r_*}(y)\), by \eqref{eq:gprime-bound}. Also \(0\le g(w)\le g(0)\le CL\) and \(\|w\|_{L^\infty(B_{3/4})}\le 2L\), so standard interior gradient estimates give
\[
|\nabla w|\le C L\qquad\text{in }B_{r_*/2}(y).
\]
Differentiating the equation,
\[
\Delta \partial_e w=g'(w)\partial_e w,
\]
and applying interior estimates to \(\partial_e w\), we obtain \(|D^2w(y)|\le C L\).

The two cases show \(|D^2w|\le CL\) in \(B_{1/2}\cap\{w>0\}\).
Since $w \in W^{2,p}(B_{1/2})$ (since its Laplacian is bounded), its Hessian vanishes a.e. in the set $B_{1/2}\cap\{w=0\}$.
	This proves that $|D^2w|\le CL$ a.e. in \(B_{1/2}\),
	hence
	\[
	\|D^2u\|_{L^\infty(B_{1/2})}\le C\|u\|_{L^\infty(B_1)}.
	\]
	By scaling, we can apply this estimate inside any ball contained inside $B_1$:
	\begin{equation}\label{eq:local-Linfty-Hessian}
	\|D^2u\|_{L^\infty(B_{R/2}(x_0))}
	\le C R^{-2}\|u\|_{L^\infty(B_R(x_0))}\qquad \text{for all balls }B_R(x_0)\subset B_1.
	\end{equation}
	We now combine this bound with interpolation to replace the \(L^\infty\) norm by an \(L^1\) norm. For a ball \(B\subset B_1\), set
	\[
	\mathfrak h(B):=\|D^2u\|_{L^\infty(B)}.
	\]
	By the local estimate just obtained, \(\mathfrak h\) is finite on compactly contained balls, and it is subadditive under finite covers. Let \(B_R(x_0)\subset B_1\). Applying \eqref{eq:local-Linfty-Hessian} to \(B_{R/2}(x_0)\) and using the standard interpolation inequality in \(B_R(x_0)\), for every \(\eta>0\) we get
	\[
	\begin{aligned}
	\mathfrak h(B_{R/4}(x_0))
	&\le C R^{-2}\|u\|_{L^\infty(B_{R/2}(x_0))}\\
	&\le C_\eta R^{-4}\|u\|_{L^1(B_R(x_0))}
	   +C\eta\,\mathfrak h(B_R(x_0)).
	\end{aligned}
	\]
	Multiplying by \(R^4\) and using \(\|u\|_{L^1(B_R(x_0))}\le \|u\|_{L^1(B_1)}\), we obtain
	\[
	R^4\mathfrak h(B_{R/4}(x_0))
	\le C\eta R^4\mathfrak h(B_R(x_0))
	   +C_\eta\|u\|_{L^1(B_1)}.
	\]
	Choosing \(\eta>0\) so small that \(C\eta\le\varepsilon_0\), where \(\varepsilon_0\) is the constant in \Cref{lem:covering} with \(\beta=4\), and then applying \Cref{lem:covering}, we conclude that
	\[
	\|D^2u\|_{L^\infty(B_{1/2})}
	\le C\|u\|_{L^1(B_1)}.
	\]
	\end{proof}

\begin{remark}\label{rmk:weiss-monneau}
Let $(u,f)\in\sol(B_1)$ with $B_1\subset\R^2$, and
suppose   \(z\in\sing(u)\). Then, by \Cref{thm:C11-2D}, \(T_f<+\infty\) and \(f(T_f)<+\infty\). Define
\[
w:=T_f-u,\qquad Z:=\{w=0\},\qquad g(s):=f(T_f-s),\qquad \lambda:=g(0)=f(T_f).
\]
At points of \(Z\), the equation
\[
\Delta w=g(w),
\]
together with the nonnegativity of $w$ and the $C^{1,1}$ estimate suggests an analogy with singular points in the classical obstacle problem. Indeed, if we define
\[
w_{z,r}(x):=\frac{w(z+rx)}{r^2},
\]
then
\[
\Delta w_{z,r}=g(r^2w_{z,r})\to g(0)
\qquad\text{as }r\downarrow0.
\]
Thus one may hope to adapt the Weiss and Monneau monotonicity formulas for the obstacle problem
\cite{Weiss99,Monneau03} (see also \cite{FigalliJEDPObstacle} for an overview), to prove that the Hessian of $w$ exists at $z$, and then perhaps deduce that $w \in C^2$.
In the present setting, however, the error terms in these monotonicity formulas are governed by
\[
g(0)-g(w).
\]
The \(C^{1,1}\) estimate above gives a local quadratic bound \(w\le A r^2\) around \(z\), for some finite constant \(A\). Hence one expects Weiss-type almost-monotonicity, and then uniqueness of quadratic blow-ups via a Monneau-type formula, provided a Dini condition of the form
\[
\int_0^1 \omega(r)\,\frac{dr}{r}<+\infty,
\qquad
\omega(r):=g(0)-g(A r^2),
\]
holds. In particular, without such an additional assumption, only \(C^{1,1}\) regularity is expected to hold.
\end{remark}

\appendix
\section{Proof of some Lemmas and Examples}

\subsection{Details on the proof of \texorpdfstring{\eqref{eq:specification}}{}}\label{app:ACTA}
We briefly justify the pointwise convergence \eqref{eq:specification} used in \Cref{thm:compactness}.
In general, if $(v,g)\in\sol(B_1)$ and $v$ is not constant, then
\[
\big|\{v=\sup_{B_1}v\}\big|=0.
\]
Indeed:
\begin{itemize}
\item If $\sup_{B_1}v=T_g$, then $g'(v)\in L^1_\loc$ forces $|\{v=T_g\}|=0$.
\item If $\sup_{B_1}v<T_g$, then $v\in C^2$ and if $|\{v=\sup v\}|>0$, the equation would give $g(\sup v)=-\Delta v=0$ (since $\Delta v=0$ a.e. in the region $\{v=\sup v\}$). Therefore, $v$ would be harmonic and hence constant (since it attains an interior maximum), contradiction.
\end{itemize}
Now fix $m<\sup_{\Omega}u$ and pick a point $x$ such that $u(x)<m$.
Up to a subsequence, we can assume that $u_k\to u$ a.e. Thus, 
since $u_k(x)\to u(x)$ and $f_k\to f$ locally uniformly on $(-\infty,m]$, we have
$f_k(u_k(x))\to f(u(x))$ for a.e.\ such $x$. Letting $m\uparrow\sup_\Omega u$ yields \eqref{eq:specification}.

\subsection{Proof of \texorpdfstring{\Cref{lem:gmtintro}}{}}\label{app:3}
\begin{proof}
If \(\alpha q\ge n\), then for every \(x\in B_1\),
\[
r^{\alpha-n}\int_{B_r(x)}\varphi
\le |B_r|^{1-1/q}r^{\alpha-n}\|\varphi\|_{L^q(B_r(x))}
\le C r^{\alpha-n/q}\|\varphi\|_{L^q(B_r(x))}\to0\quad\text{ as }r\downarrow0.
\]
Hence \(E=\emptyset\). We may therefore assume \(\alpha q<n\).

Fix $\delta>0$ and for each $x\in E$ choose $r_x\in(0,\delta)$ such that
\[
\int_{B(x,r_x)}\varphi \ge \tfrac12\,r_x^{n-\alpha}.
\]
By Vitali's covering lemma, there exists a countable subcollection $\{B(x_i,r_i)\}$ of pairwise disjoint balls such that
\[
E \subset \bigcup_{i} B(x_i,5r_i).
\]
Also, by H\"older,
\[
\int_{B(x_i,r_i)}\varphi^q
\ge |B_{r_i}|^{1-q}\left(\int_{B(x_i,r_i)}\varphi\right)^q
\ge c(n)\,r_i^{n-\alpha q}.
\]
Therefore, since the balls  $\{B(x_i,r_i)\}$  are disjoint,
\[
\mathcal{H}^{n-\alpha q}_\delta(E)
\le \sum_i (10r_i)^{n-\alpha q}
\le C(n)\sum_i \int_{B(x_i,r_i)}\varphi^q
\le C(n)\int_{E_\delta}\varphi^q,
\]
where $E_\delta$ is the $\delta$-neighborhood of $E$.
Letting $\delta\downarrow0$ we find $\mathcal{H}^{n-\alpha q}(E)<+\infty$. Since $\alpha>0$, this implies $\mathcal{H}^n(E)=0$. Then using $\varphi\in L^q_\loc$ and $\cap_{\delta>0}E_\delta =E,$ we find that in fact $\mathcal{H}^{n-\alpha q}(E)=0$.
\end{proof}

\subsection{An integral version of the counterexample of Villegas}\label{app:Villegas}
In this appendix we modify the construction in \cite{villegas21} to show that there exists a convex, increasing, superlinear $f\in C^2(\R)$ and a radial singular weak stable solution $u^\star$ such that 
\begin{equation}\label{eq:villegas}
\rho_k^{2-n}\int_{B_{\rho_k}} f'(u^\star(x))\,dx \to 0\quad \text{for a sequence $\rho_k\to0$, even though $0\in\sing(u^\star)$.}
\end{equation}
In fact, as the argument shows, one can replace $\rho_k^{2-n}$ by $\rho_k^{\delta-n}$ for any $\delta>0$.

For consistency, only in this subsection we follow the notation of \cite{villegas21} and denote the dimension by $N$.
Villegas starts from a radial decreasing $\Psi\in C^\infty(\overline B_1\setminus\{0\})$ such that
\[
0<\Psi(r)\le \frac{2(N-2)}{r^2}\qquad \forall\,r\in(0,1],
\]
and constructs the unique radial function $\omega\in H^1(B_1)$ solving the linear PDE
\begin{equation}\label{eq:Ppsi}
\begin{cases}
-\lap \omega(x)= \left(\Psi (x)-\frac{N-1}{|x|^2}\right)\omega (x)& \text{ in } B_1,\\
\omega (x)= 1 & \text{ on } \partial B_1.
\end{cases}\tag{$P_\Psi$}
\end{equation}
Then one defines $(u,f)\in\sol(B_1)$ by imposing
\[
u'(r)=-\omega(r),\quad f'(u(r))=\Psi(r),\quad u(1)=0\quad \text{and}\quad u(0)=\int_0^1 \omega(r)\,dr.
\]

In the following Proposition we construct $\Psi$ so that $\int_0^1 \omega(r)\,dr=+\infty$ and $\Psi(0)=+\infty$ (so $0\in\sing(u)$), while keeping $\Psi$ very small in average along a sequence of scales $y_n$. 

\begin{proposition}\label{prop:villegasmod}
Let $N\ge 10$, $\beta>1$, and $\delta\in(0,\tfrac12]$. Then there exist a radial decreasing $\Psi\in C^\infty(\overline{B_1}\setminus \{0\})$ and a sequence $y_n\downarrow0$ such that:
\begin{enumerate}
\item[(i)] $0<\Psi(r)\le \dfrac{2(N-2)}{r^{2}}$ and $\Psi'(r)<0$ for all $0<r\le1$.
\item[(ii)] $\Psi(r)=r^{-\delta}$ for all $r\in [y_n^\beta,y_n]$.
\item[(iii)] If $\omega\in H^1(B_1)$ solves \eqref{eq:Ppsi}, then $\int_0^1 \omega(r)\,dr=+\infty$.
\end{enumerate}
\end{proposition}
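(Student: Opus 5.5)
The plan is to adapt Villegas' construction, building $\Psi$ inductively on dyadic-type blocks. The guiding computation is the following. For $\Psi=c\,r^{-2}$, the radial equation in \eqref{eq:Ppsi} admits the power solutions $r^{-a}$ with $a^2-(N-2)a+c-(N-1)=0$. The choice $c=2(N-2)$ gives the root $a=1$, so $\omega=1/r$ and $\int\omega\,dr$ diverges logarithmically. This is the reason for the threshold in (i). Moreover $2(N-2)\le (N-2)^2/4$ exactly when $N\ge10$, so by Hardy's inequality \eqref{eq:hardy} the operator $-\Delta-(\Psi-\frac{N-1}{|x|^2})$ is coercive on $H^1_0(B_1)$ for every admissible $\Psi$. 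This coercivity gives a comparison principle for \eqref{eq:Ppsi}: if $\Psi_1\le\Psi_2$, then $0<\omega_1\le\omega_2$.

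\textbf{Shape of $\Psi$.} The decreasing constraint forbids returning to $2(N-2)/r^2$ immediately to the left of a block $[y_n^\beta,y_n]$ where $\Psi=r^{-\delta}$. Indeed, on $(y_{n+1},y_n^\beta)$ we need $y_n^{-\beta\delta}\le\Psi\le y_{n+1}^{-\delta}$. I will therefore take, on $(y_{n+1},y_n^\beta)$, a smoothing of $\min\{2(N-2)r^{-2},\,y_{n+1}^{-\delta}\}$, glued monotonically to $r^{-\delta}$ near both endpoints. This equals $2(N-2)/r^2$ on a ``hyperbolic stretch'' $S_n:=[C y_{n+1}^{\delta/2},\,y_n^\beta/2]$. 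Because $\delta/2>0$, the stretch $S_n$ is as long in logarithmic scale as we like, provided $y_{n+1}$ is chosen small enough relative to $y_n$. Properties (i) and (ii) are then built in, with strict monotonicity obtained by a tiny perturbation.

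\textbf{Induction.} Let $\Psi_n$ denote the function already fixed on $[y_n^\beta,1]$ and extended by $r^{-\delta}$ on $(0,y_n^\beta)$, and let $\omega_n$ be the corresponding solution of \eqref{eq:Ppsi}. The next stage only increases $\Psi$ on $(0,y_n^\beta)$, so $\Psi_{n+1}\ge\Psi_n$. By comparison, $\omega_{n+1}\ge\omega_n$, and hence the final $\omega$ dominates every $\omega_n$. It therefore suffices to choose $y_{n+1}$ so that $\int_{S_n}\omega_{n+1}\,dr\ge1$; summing over $n$ gives (iii). For this lower bound I plan to use a subsolution on the annulus $S_n$. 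There the equation is exactly the hyperbolic one, with solutions $c_1r^{-1}+c_2r^{-(N-3)}$. Set $m_n:=\omega_n(y_n^\beta/2)>0$, which depends only on previous choices, and compare $\omega_{n+1}$ on the annulus with $\phi:=m_n\,(y_n^\beta/2)\,r^{-1}$. On the outer boundary $\omega_{n+1}\ge\omega_n=\phi$. The inner boundary requires a little more care, since there $\phi$ is larger than $\omega_n$. One option is to subtract a small multiple of $r^{-(N-3)}$ that vanishes at the inner radius; another is to use the Riccati variable $z=r\omega'/\omega$, which, starting from $r\omega'/\omega\ge -1$-type bounds, stays near $-1$ across a long hyperbolic stretch. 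Either way this should give $\omega_{n+1}\gtrsim m_n y_n^\beta/r$ on the outer half (in log scale) of $S_n$, and hence
\[
\int_{S_n}\omega_{n+1}\,dr\;\gtrsim\; m_n\,y_n^\beta\,\log\!\big(y_n^\beta/y_{n+1}^{\delta/2}\big)\xrightarrow[y_{n+1}\downarrow0]{}+\infty .
\]

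\textbf{Main obstacle.} The delicate step is this lower bound on $S_n$. The H$^1$ solution is fixed by a regularity condition at $0$ rather than by an initial value problem, so one must rule out that the $r^{-(N-3)}$ mode or the $r^{+1}$-type decay of the small-$\Psi$ regime destroys the $1/r$ growth uniformly in how $\Psi$ is later chosen near $0$. My first attempt is to combine the comparison principle (monotonicity in $\Psi$) with the Riccati/phase-plane analysis of $z$ on the annulus. A fallback is to mimic Villegas' explicit estimates for $\omega$ on each piece directly.
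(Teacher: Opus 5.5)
Your proof is correct, and your construction of $\Psi$ is the same as the paper's: blocks $r^{-\delta}$ on $[y_n^\beta,y_n]$, a stretch with $\Psi=2(N-2)r^{-2}$ starting just below $y_n^\beta$, and monotone transitions. Your constraint $2(N-2)r^{-2}<y_{n+1}^{-\delta}$ at the inner end of the stretch is the analogue of the paper's second condition on $y_n$.

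The difference is in (iii). The paper fixes the stretches in advance as $[x_ne^{-1/x_n^3},x_n)$ with $x_{n+1}=y_n^\beta/4$, and defers the divergence of $\int_0^1\omega$ to Villegas' Proposition 2.3. You instead prove it directly, via comparison in $\Psi$ and a stretch length chosen adaptively from $m_n=\omega_n(y_n^\beta/2)$.

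The step you flag as the main obstacle closes immediately. On $S_n=[r_{in},R]$ the final $\omega$ satisfies $r\omega(r)=c_1+c_2r^{-(N-4)}$, which is monotone in $r$. If $c_2\ge0$, then $r\omega(r)\ge R\omega(R)$ on $S_n$. If $c_2<0$, then $c_1>R\omega(R)$, and positivity at $r_{in}$ gives $|c_2|\le c_1r_{in}^{N-4}$, so $r\omega(r)\ge\tfrac12R\omega(R)$ on $[2r_{in},R]$. In both cases
\[
\int_{S_n}\omega\,dr\;\ge\;\tfrac12\,R\,\omega(R)\,\log\frac{R}{2r_{in}},
\]
with no Riccati analysis and no need to pass through $\omega_{n+1}$.

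Your comparison principle can also be applied with $\Psi\equiv0$, whose $H^1$ solution is $\omega_0(r)=r$. This gives the universal bound $\omega\ge r$, so $\omega(R)\ge R$ and $m_n\ge y_n^\beta/2$. As a result, the family $\Psi_n$ is unnecessary, and so is the requirement $\Psi_{n+1}\ge\Psi_n$ (i.e. $\Psi\ge r^{-\delta}$ on the gaps). This bound is also what lets one fix the lengths a priori as the paper does: a stretch of logarithmic length $x_n^{-3}$ below $x_n$ contributes at least $\tfrac12x_n^2\,(x_n^{-3}-\log 2)\to+\infty$.

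So your route gives a self-contained proof of (iii), while the paper's gives an explicit, non-adaptive choice of scales. One small wording slip: monotonicity does not forbid returning to $2(N-2)r^{-2}$ right below $y_n^\beta$, which both you and the paper do. It only limits how far down the stretch can extend.
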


\begin{proof}
We follow \cite{villegas21} closely. Set $x_1=1$ and construct sequences $x_n\downarrow0$ and $y_n\downarrow0$ inductively as follows:
choose $y_n$ so small that
\[
y_n< x_n e^{-1/x_n^3}\quad\text{ and }\quad y_n^{-\delta}>\frac{2(N-2)}{\left(x_n e^{-1/x_n^3}\right)^2},
\]
and then set $x_{n+1}:=y_n^\beta/4$. This ensures
\[
x_{n+1}< y_n^\beta <y_n<x_n e^{-1/x_n^3}<x_n.
\]
Define $\Psi$ on each interval $[x_{n+1},x_n)$ by
\[
\Psi(r) :=
\begin{cases}
2(N-2)\,r^{-2} & \text{on }[x_n e^{-1/x_n^3},x_n),\\
\text{a smooth decreasing transition} & \text{on }[y_n, x_n e^{-1/x_n^3}),\\
r^{-\delta} &\text{on }[ y_n^\beta,y_n),\\
\text{a smooth decreasing transition} &\text{on }[\tfrac12y_n^\beta, y_n^\beta),\\
2(N-2)\,r^{-2} & \text{on }[x_{n+1},\tfrac12y_n^\beta).
\end{cases}
\]
The choice of $y_n$ makes the transitions possible while keeping $\Psi$ decreasing and satisfying $\Psi(r)\le 2(N-2)r^{-2}$.
Property (iii) is proven exactly as in \cite[Proposition 2.3]{villegas21}: it uses the size of $\Psi$ on the intervals $[x_n e^{-1/x_n^3},x_n)$ and the assumption $N\ge 10$.
\end{proof}

Taking $\beta=\dfrac{N-\delta}{N-2}>1$, properties (i) and (ii) in \Cref{prop:villegasmod} imply that
\[
\int_{\{|x|<y_n\}} f'(u)\,dx \le C\int_{\{|x|<y_n^\beta\}}|x|^{-2}\,dx
 + C\int_{\{y_n^\beta<|x|<y_n\}}|x|^{-\delta}\,dx
\le C y_n^{\beta(N-2)} + C y_n^{N-\delta}.
\]
Since $\beta(N-2)=N-\delta$, this gives
\[
\int_{\{|x|<y_n\}} f'(u)\,dx \le C y_n^{N-\delta},
\]
hence
\[
y_n^{2-N}\int_{\{|x|<y_n\}} f'(u)\,dx \le C y_n^{2-\delta}\to0,
\]
which corresponds exactly to \eqref{eq:villegas} with $\rho_n=y_n$.

\subsection{A useful logarithmic comparison lemma}\label{app:logratio}

\begin{lemma}\label{lem:logratio}
Let $f:(t_0,T_f)\to(0,+\infty)$ be convex.

\smallskip
\noindent\textup{(i)}
Assume $T_f<+\infty$, $f(t)\to+\infty$ as $t\uparrow T_f$, and
\[
\int_{t_0}^{T_f} f(s)\,ds = +\infty.
\]
Then
\[
\limsup_{t\uparrow T_f}\frac{\log f(t)}{\log f'(t)} \;\ge\; \frac12.
\]

\smallskip
\noindent\textup{(ii)}
Assume $T_f=+\infty$, $f(t)\to+\infty$ as $t\to+\infty$, and $f'(t)>1$ for all sufficiently large $t$. Then
\[
\limsup_{t\to+\infty}\frac{\log f(t)}{\log f'(t)} \;\ge\; 1.
\]
\end{lemma}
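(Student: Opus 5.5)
Both parts will be argued by contradiction: assuming the $\limsup$ is below the claimed threshold gives a pointwise upper bound on $f$ in terms of $f'$ near $T_f$. This turns into a differential inequality for $f$ alone, which contradicts the hypotheses (infinite integral in (i), global existence in (ii)). Throughout, I work with the left derivative $f'$, which is nondecreasing and tends to $+\infty$ as $t\uparrow T_f$ in both cases. In case (i) this holds because $f$ blows up at a finite level. In case (ii), $f'>1$ eventually, and if $f'$ stayed bounded then $f$ would grow at most linearly, so $\log f/\log f'$ would be $\gtrsim \log t/\log C\to\infty$ and there would be nothing to prove. Hence I may assume $f'\to+\infty$.

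\emph{Part (ii).} Suppose $\limsup \log f/\log f' <1$. Then there exist $\theta<1$ and $t_1$ such that $f(t)\le f'(t)^{\theta}$ for $t\ge t_1$. This means $f'(t)\ge f(t)^{1/\theta}$ with $1/\theta>1$. Since $f\to+\infty$, I can integrate the ODE inequality: $\frac{d}{dt}\bigl(f^{1-1/\theta}\bigr)/(1-1/\theta)\ge 1$, that is, $\frac{d}{dt} f(t)^{-(1/\theta-1)}\le -(1/\theta-1)$. So the positive quantity $f^{-(1/\theta-1)}$ decreases at a linear rate and must vanish at a finite time, meaning $f$ blows up in finite time. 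This contradicts $T_f=+\infty$ (finiteness of $f$ on $\R$). The chain rule for the absolutely continuous convex function $f$ justifies the computation a.e.

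\emph{Part (i).} Suppose $\limsup \log f/\log f'<\frac12$. Then $f\le (f')^{\theta}$ near $T_f$ for some $\theta<\frac12$, i.e.\ $f'\ge f^{p}$ with $p=1/\theta>2$. The same integration from $t$ to $T_f$, using $f(T_f^-)=+\infty$, gives
\[
f(t)^{-(p-1)}\ \ge\ (p-1)(T_f-t),
\]
hence $f(t)\le C\,(T_f-t)^{-1/(p-1)}$ with exponent $1/(p-1)<1$. Then $\int^{T_f} f<\infty$, contradicting the hypothesis.

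The only delicate step I see is the integration near $T_f$: one must integrate $\frac{d}{ds}f(s)^{1-p}\le -(p-1)$ over $[t,T_f)$ and use $f(s)^{1-p}\to 0$. Monotonicity and absolute continuity of $f$ on compact subintervals make this routine, so I expect no real obstacle beyond bookkeeping. The thresholds $\frac12$ and $1$ appear naturally, as the exponents at which the comparison ODE $f'=f^{p}$ switches between integrable and non-integrable blow-up (for (i)) and between global existence and finite-time blow-up (for (ii)).
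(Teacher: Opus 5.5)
Your argument is correct and is essentially the paper's. In both parts, the assumed bound on the $\limsup$ becomes $f'\ge f^{p}$ near $T_f$, which forces integrable blow-up ($p>2$) in (i) and finite-time blow-up ($p>1$) in (ii). The difference is cosmetic: in (i) you integrate $\frac{d}{dt}f^{1-p}\le-(p-1)$ to get $f(t)\le C(T_f-t)^{-1/(p-1)}$, while the paper substitutes $y=f(t)$ and bounds $\int y\,\frac{dt}{dy}\,dy\le\int y^{1-p}\,dy$. Exactly like the paper, you tacitly use that $f$ is bounded near $t_0$. This holds for the nondecreasing $f\in\NN$ to which the lemma is applied, but not from convexity alone: $f(t)=1/t+(1-t)^{-1/2}$ on $(0,1)$ meets the literal hypotheses of (i) with ratio tending to $1/3$.
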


\begin{proof}
\medskip
\noindent\emph{(i)}
Assume by contradiction that
\[
L:=\limsup_{t\uparrow T_f}\frac{\log f(t)}{\log f'(t)} < \frac12.
\]
Choose $\varepsilon\in\bigl(0,\tfrac12-L\bigr)$ so that $L\le \tfrac12-2\varepsilon$.
For $t$ close enough to $T_f$, we have $\log f(t)>0$, $\log f'(t)>0$, and
\[
\frac{\log f(t)}{\log f'(t)}\le \frac12-\varepsilon,
\]
i.e.
\[
f'(t) \ge f(t)^{\,q},
\qquad q=\frac{2}{1-2\varepsilon}>2.
\]
Since $f$ is increasing, set $y=f(t)$. From this inequality we obtain $dt/dy \le y^{-q}$, hence
\[
\int_{t_1}^{T_f} f(s)\,ds
=\int_{f(t_1)}^{\infty} y\,\frac{dt}{dy}\,dy
\le \int_{f(t_1)}^{\infty} y^{1-q}\,dy <+\infty,
\]
contradicting $\int_{t_0}^{T_f} f=+\infty$.

\medskip
\noindent\emph{(ii)}
Assume by contradiction that
\[
L:=\limsup_{t\to+\infty}\frac{\log f(t)}{\log f'(t)} < 1.
\]
Then for some $\varepsilon>0$ and all $t$ large enough,
\[
\frac{\log f(t)}{\log f'(t)}\le 1-\varepsilon,
\qquad\text{so}\qquad
f'(t)\ge f(t)^{\frac{1}{1-\varepsilon}} = f(t)^{1+\delta},\quad \delta=\frac{\varepsilon}{1-\varepsilon}>0.
\]
This differential inequality forces $f$ to blow up in finite time, contradicting $T_f=+\infty$.
\end{proof}

\subsection{Construction of suitable cut-off functions}\label{app:cutoff}
We construct the cutoff lemma used in the proof of \Cref{prop:aprioribound}.

\begin{lemma}\label{lem:logp}
Given $p>1$ and $\eps>0$, there exists $\zeta\in C^2_c(B_1)$ such that $\zeta\ge \chi_{B_{1/2}}$ and
\[
\int_{B_1} w \Big(|\nabla \zeta|^2+|\zeta\lap\zeta| \Big)\, dx
\le \eps\int_{B_1} w \,|\log w |^{2p}\,\zeta^2\,dx +C,
\]
for all nonnegative measurable $w:B_1 \to \R$, where $C=C(n,p,\eps)$ is a large constant.
\end{lemma}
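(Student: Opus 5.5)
The plan is to build $\zeta$ as a power of a cutoff whose gradient is of logarithmic size, and then to split each point according to whether $w$ is large or small compared with the cutoff weight. Concretely, fix a radial profile $\psi\in C^2_c(B_1)$ with $\psi\equiv1$ on $B_{1/2}$, $0\le\psi\le1$, and set $\zeta:=\psi^k$ for a large integer $k$ (to be fixed depending on $p$). Then
\[
|\nabla\zeta|^2+|\zeta\Delta\zeta|\le C_k\,\psi^{2k-2}\big(|\nabla\psi|^2+|\Delta\psi|\big)\le C_k\,\zeta^{2-2/k}
\]
pointwise. The first step is therefore to reduce the lemma to the scalar claim
\begin{equation*}
\int_{B_1} w\,\zeta^{2-2/k}\,dx\le \eps\int_{B_1} w\,|\log w|^{2p}\zeta^2\,dx+C .
\end{equation*}
With a plain power this claim fails near $\partial\{\zeta>0\}$: there $\zeta^{-2/k}$ is unbounded, while $|\log w|^{2p}$ need not be large. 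So $\psi$ must be chosen so that $\zeta$ decays very fast, i.e. $\zeta$ is an exponential-type cutoff.

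The refined choice, and the key step, is $\zeta=e^{-\Phi}$ near the boundary of its support, with $\Phi\to+\infty$ at $|x|=r_1<1$ and $\Phi=0$ on $B_{1/2}$. One then needs
\[
|\nabla\zeta|^2+|\zeta\Delta\zeta|\lesssim \big(|\nabla\Phi|^2+|\Delta\Phi|\big)\zeta^2 ,
\]
and the task is to dominate $(|\nabla\Phi|^2+|\Delta\Phi|)\zeta^2$ by $\Phi^{2p}\zeta^2$ up to an integrable error. Taking $\Phi(x)=(r_1-|x|)^{-a}$ gives $|\nabla\Phi|^2+|\Delta\Phi|\lesssim (r_1-|x|)^{-2a-2}=\Phi^{2+2/a}$. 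This is $\le \Phi^{2p}$ as soon as $2+2/a\le 2p$, i.e. $a\ge 1/(p-1)$, and this is exactly where $p>1$ is used. Now split pointwise:
\begin{itemize}
\item where $|\log w|\ge \Lambda\Phi$ (with $\Lambda$ large), we have $w\,\Phi^{2p}\zeta^2\le \Lambda^{-2p}w|\log w|^{2p}\zeta^2$, which is absorbed with coefficient $\eps$;
\item where $|\log w|<\Lambda\Phi$, we have $w\le e^{\Lambda\Phi}$, so the integrand is at most $C\,e^{\Lambda\Phi}\Phi^{2p}e^{-2\Phi}$.
\end{itemize}
The second bullet forces a second adjustment: the decay rate of $\zeta$ must beat $\Lambda$. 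I would therefore take $\zeta=e^{-M\Phi}$ with $M\gg\Lambda$, or equivalently rescale $\Phi$. Then the bad-set integrand is bounded by $C\,\Phi^{2p}e^{-(2M-\Lambda)\Phi}$, which is uniformly bounded, so its integral is a constant $C(n,p,\eps)$. Rerunning the computation with $M\Phi$ changes only the constants: $|\nabla(M\Phi)|^2\sim M^2\Phi^{2+2/a}$ against $|\log w|^{2p}\ge (\Lambda M\Phi)^{2p}$ on the good set, which is still absorbable once $\Lambda$ is large relative to $\eps$. The order of parameter choices is: $a$ from $p$, then $\Lambda$ from $\eps$ and $M$, and the $M$-dependence is harmless since $2p>2$ lets a power of $\Lambda$ absorb it. The interior region $B_{1/2}$ contributes nothing, because $\nabla\zeta=0$ there.

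The main obstacle is regularity: $\zeta=e^{-M\Phi}$ extended by $0$ outside $B_{r_1}$ must be $C^2$. This holds since all derivatives of $e^{-M(r_1-r)^{-a}}$ vanish as $r\uparrow r_1$. One also needs a smooth interpolation between $\Phi=0$ on $B_{1/2}$ and the singular profile; any bounded smooth modification there only costs a constant on a compact region where $\zeta\ge c>0$. I will handle that region through the same good/bad split with $\Phi$ replaced by $1+\Phi$.
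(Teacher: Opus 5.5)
Your proposal is correct and is essentially the paper's argument: an exponential cutoff $\zeta=e^{-\Phi}$ with $\Phi$ a negative power of the distance to the edge of the support, the pointwise bound $|\nabla\zeta|^2+|\zeta\lap\zeta|\lesssim\zeta^2\bigl(1+|\log\zeta|^{2+2/a}\bigr)$ with $2+2/a\le 2p$, and a pointwise split according to whether $|\log w|$ is large compared with $|\log\zeta|$. Your choice $r_1<1$ also makes $\zeta$ genuinely compactly supported in $B_1$, which the paper's profile, positive on all of $B_1$, is not.

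The only real difference is where the small factor $\eps$ comes from. The paper takes the exponent strictly subcritical, $q=\frac{p+1}{2}<p$, so its bad-set threshold $|\log w|\lesssim\eps^{-1/p}|\log\zeta|^{q/p}$ is sublinear in $|\log\zeta|$ and a single $\eps$-independent $\zeta$ works. Your linear threshold instead forces an $\eps$-dependent rate $M$, and as written that choice is circular: you ask for ``$M\gg\Lambda$'' but also choose ``$\Lambda$ from $\eps$ and $M$'', and the rerun threshold $\Lambda M\Phi$ would need $\Lambda<2$ on the bad set. State it cleanly, e.g.\ split at $|\log w|=|\log\zeta|=M\Phi$. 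This gives good-set coefficient $CM^{2-2p}\le\eps$ for $M$ large, and bad-set integrand at most $CM^2\Phi^{2p}e^{-M\Phi}$, which is bounded.
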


\begin{proof}
Let $q:=\frac{p+1}2\in (1,p)$ and choose a radially decreasing cutoff $0\le \zeta\le1$ with
\[
\chi_{B_{1/2}}\le \zeta \le \chi_{B_1},\qquad \zeta>0\ \text{on }B_1,
\]
and such that near $\partial B_1$ (i.e.\ for $\zeta$ small) it satisfies 
$$
\zeta(|x|)=e^{-(1-|x|)_+^{-\frac{1}{q-1}}},\qquad \text{thus}\quad \partial_r\zeta= -\frac{1}{q-1}\zeta\,|\log\zeta|^{q}.
$$
Note that, with this choice, for $x$ close to $\partial B_1$ we have
\[
\zeta\lap\zeta=\zeta\left(\partial_{rr}\zeta+(n-1)\frac{\partial_r\zeta}{r}\right)
=\frac{1}{(q-1)^2} \zeta^2 |\log\zeta |^{2q}\Big(1-\frac{q}{|\log\zeta|}-\frac{(q-1)(n-1)}{|x|\,|\log\zeta|^q}\Big)\sim \zeta^2|\log\zeta|^{2q}.
\]
Since away from $\partial B_1$ we can bound $|\nabla \zeta|^2+|\zeta\lap\zeta|\leq C\zeta^2$, we conclude that
$$
|\nabla \zeta|^2+|\zeta\lap\zeta|\leq \bar C\,\zeta^2\left(1+|\log\zeta|^{2q}\right)\qquad \text{in }B_1,
$$
and therefore
\[
\int w\Big(|\nabla \zeta|^2+|\zeta\lap\zeta| \Big)
\leq \bar C \int w\,\zeta^2\left(1+|\log\zeta|^{2q}\right).
\]
Now, recalling that $w \geq 0$ and $0\leq \zeta\leq 1$, a standard Young-type splitting yields that, for any $\eps>0$,\footnote{Indeed, set
$t:=\sqrt{1+|\log\zeta|^{2q}}\ge 1$, so that $1+|\log\zeta|^{2q}=t^2$.
If $t\le \varepsilon|\log w|^{p}$, then $w t^{2}\le \varepsilon^{2}w|\log w|^{2p}$, giving the desired inequality.
Otherwise, $|\log w|\le (t/\varepsilon)^{1/p}$, hence $w\le \exp((t/\varepsilon)^{1/p})$, and therefore
\[
w\,t^2 \le t^2 \exp\!\bigl((t/\varepsilon)^{1/p}\bigr).
\]
Since $\zeta^{-2}=e^{2|\log\zeta|}=\exp\!\bigl(2(t^2-1)^{1/(2q)}\bigr)$ and $q<p$, the function
\[
t\mapsto t^2\exp\!\Bigl((t/\varepsilon)^{1/p}-2(t^2-1)^{1/(2q)}\Bigr)
\]
is bounded on $[1,+\infty)$. This implies that $w\,t^2\le C_{\varepsilon,p,q}\,\zeta^{-2}$, and therefore the desired inequality holds also in this case.}
\[
w\,\bigl(1+|\log\zeta|^{2q}\bigr)
\le \varepsilon^2\,w\,|\log w|^{2p} + C_{\varepsilon,p,q}\,\zeta^{-2}.
\]
so multiplying by $\zeta^2$ and integrating yields
\[
\int w\Big(|\nabla \zeta|^2+|\zeta\lap\zeta| \Big)
\leq \bar C\eps^2\int w|\log w|^{2p}\zeta^2 + C_{\eps,p,q}.
\]
Assuming without loss of generality that $\eps$ is sufficiently small so that $\bar C\eps\leq 1$, the result follows.
\end{proof}
\medskip

\noindent
{\it Acknowledgments:} The first author is grateful to the Marvin V. and Beverly J. Mielke Fund for supporting his stay at IAS
Princeton, where part of this work was done. The second author was supported by the European Research Council (ERC) under
Grant Agreement No. 948029 while he was at ETH, and by the Giorgio and Elena Petronio Fellowship while he was at the IAS.

\medskip

\noindent
\textit{AI statement:} The mathematical work in this paper was carried out by the authors without the use of AI.

\printbibliography
\end{document}